\def\RSthmtxt{theorem~}\newref{thm}{name = \RSthmtxt}}
\def\RSlemtxt{lemma~}\newref{lem}{name = \RSlemtxt}}
\theoremstyle{plain}
\newtheorem{thm}{\protect\theoremname}[section]
  \theoremstyle{definition}
  \newtheorem{defn}[thm]{\protect\definitionname}
  \theoremstyle{plain}
  \newtheorem{lem}[thm]{\protect\lemmaname}
 \newlist{casenv}{enumerate}{4}
 \setlist[casenv]{leftmargin=*,align=left,widest={iiii}}
 \setlist[casenv,1]{label={{\itshape\ \casename} \arabic*.},ref=\arabic*}
 \setlist[casenv,2]{label={{\itshape\ \casename} \roman*.},ref=\roman*}
 \setlist[casenv,3]{label={{\itshape\ \casename\ \alph*.}},ref=\alph*}
 \setlist[casenv,4]{label={{\itshape\ \casename} \arabic*.},ref=\arabic*}
  \theoremstyle{plain}
  \newtheorem{cor}[thm]{\protect\corollaryname}
  \theoremstyle{definition}
  \newtheorem{example}[thm]{\protect\examplename}
  \theoremstyle{remark}
  \newtheorem{rem}[thm]{\protect\remarkname}
  \theoremstyle{plain}
  \newtheorem{prop}[thm]{\protect\propositionname}
\def\@fnsymbol#1{\ensuremath{\ifcase#1\or *\or **\or \ddagger\or
   \mathsection\or \mathparagraph\or \|\or \dagger\dagger
   \or \ddagger\ddagger \else\@ctrerr\fi}}
\tikzset{   pt/.style={insert path={node[scale=2]{.}}},   dnup/.style={insert path={ [pt] .. controls +(0,1) and +(0,-1) .. +(#1,2) [pt]}},   dndn/.style={insert path={ [pt] .. controls +(0,0.25) and +(0,0.25) .. +(#1,0) [pt]}},   upup/.style={insert path={ [pt] .. controls +(0,-0.25) and +(0,-0.25) .. +(#1,0) [pt]}}, upup2/.style={insert path={ [pt] .. controls +(0,-0.5) and +(0,-0.5) .. +(#1,0) [pt]}}, }
\newcommand{\Rt}{\widetilde{\mathcal{R}}_E}
\newcommand{\Lt}{\widetilde{\mathcal{L}}_E}
\newcommand{\Ht}{\widetilde{\mathcal{H}}_E}
\DeclareMathOperator{\Hom}{Hom}
\DeclareMathOperator{\id}{id}
\DeclareMathOperator{\tr}{tr}
\DeclareMathOperator{\im}{\mathsf{im}}
\DeclareMathOperator{\PT}{\mathcal{PT}}
\DeclareMathOperator{\PR}{\mathcal{P}}
\DeclareMathOperator{\IS}{\mathcal{IS}}
\DeclareMathOperator{\T}{\mathcal{T}}
\DeclareMathOperator{\B}{\mathcal{B}}
\DeclareMathOperator{\Rc}{\mathcal{R}}
\DeclareMathOperator{\Lc}{\mathcal{L}}
\DeclareMathOperator{\Hc}{\mathcal{H}}
\DeclareMathOperator{\Dc}{\mathcal{D}}
\DeclareMathOperator{\Jc}{\mathcal{J}}
\DeclareMathOperator{\dom}{\mathsf{dom}}
\DeclareMathOperator{\Irr}{\mathsf{Irr}}
\DeclareMathOperator{\Ind}{Ind}
\DeclareMathOperator{\Coind}{Coind}
\DeclareMathOperator{\op}{op}
\DeclareMathOperator{\UT}{UT}
\DeclareMathOperator{\coker}{coker}
\newcommand{\LeqRt}{\leq_{\widetilde{\mathcal{R}}_E}}
\newcommand{\LeqLt}{\leq_{\widetilde{\mathcal{L}}_E}}
\newcommand{\LneqqLt}{\lneqq_{\widetilde{\mathcal{L}}_E}}
\def\RSlemtxt{Lemma~}
\def\RSthmtxt{Theorem~}
\providecommand{\corollaryname}{Corollary}
  \providecommand{\definitionname}{Definition}
  \providecommand{\examplename}{Example}
  \providecommand{\lemmaname}{Lemma}
  \providecommand{\propositionname}{Proposition}
  \providecommand{\remarkname}{Remark}
 \providecommand{\casename}{Case}
\providecommand{\theoremname}{Theorem}
\providecommand{\corollaryname}{Corollary}
  \providecommand{\definitionname}{Definition}
  \providecommand{\examplename}{Example}
  \providecommand{\lemmaname}{Lemma}
  \providecommand{\propositionname}{Proposition}
  \providecommand{\remarkname}{Remark}
 \providecommand{\casename}{Case}
\providecommand{\theoremname}{Theorem}
  \providecommand{\corollaryname}{Corollary}
  \providecommand{\definitionname}{Definition}
  \providecommand{\examplename}{Example}
  \providecommand{\lemmaname}{Lemma}
  \providecommand{\propositionname}{Proposition}
  \providecommand{\remarkname}{Remark}
 \providecommand{\casename}{Case}
\providecommand{\theoremname}{Theorem}
\begin{document}

\title{Ehresmann Semigroups Whose Categories are EI and Their Representation
Theory : Extended Version}

\author{Stuart Margolis \thanks{Department of Mathematics, Bar Ilan University, 52900 Ramat Gan, Israel }
\\
 \Envelope \, margolis@math.biu.ac.il\\
 \and Itamar Stein\thanks{Mathematics Unit, Shamoon College of Engineering, 77245 Ashdod, Israel }\\
 \Envelope \, Steinita@gmail.com}
\maketitle
\begin{abstract}
We study simple and projective modules of a certain class of Ehresmann
semigroups, a well-studied generalization of inverse semigroups. Let
$S$ be a finite right (left) restriction Ehresmann semigroup whose
corresponding Ehresmann category is an EI-category, that is, every
endomorphism is an isomorphism. We show that the collection of finite
right restriction Ehresmann semigroups whose categories are EI is
a pseudovariety. We prove that the simple modules of the semigroup
algebra $\Bbbk S$ (over any field $\Bbbk$) are formed by inducing
the simple modules of the maximal subgroups of $S$ via the corresponding
Sch\"{u}tzenberger module. Moreover, we show that over fields with good
characteristic the indecomposable projective modules can be described
in a similar way but using generalized Green's relations instead of
the standard ones. As a natural example, we consider the monoid $\PT_{n}$
of all partial functions on an $n$-element set. Over the field of
complex numbers, we give a natural description of its indecomposable
projective modules and obtain a formula for their dimension. Moreover,
we find certain zero entries in its Cartan matrix. 
\end{abstract}
\textbf{Mathematics Subject Classification. 20M30, 16G10}

\textbf{Keywords:} Ehresmann semigroups, EI-categories, Right restriction
semigroups, Semigroup algebras, Projective modules, Partial functions.

\section{Introduction}

A semigroup $S$ is called \emph{inverse} if every element $a\in S$
has a unique inverse, that is, an element $b\in S$ such that $aba=a$
and $bab=b$. Inverse semigroups are one of the central objects of
study in semigroup theory (see \cite{Lawson1998}) and in particular
their representation theory is well-studied (see \cite[Part IV]{Steinberg2016}).
There are several generalizations of inverse semigroups that keep
some of their properties and structure. In this paper we discuss representations
of a generalization called Ehresmann semigroups. Let $E$ be a subsemilattice
of a semigroup $S$ (that is, a commutative subsemigroup of idempotents).
Define two equivalence relations $\Lt$ and $\Rt$ in the following
way: Elements $a,b\in S$ satisfy $a\Lt b$ ($a\Rt b$) if $a$ and
$b$ have the same set of right (respectively, left) identities from
$E$. We also define $\Ht=\Rt\cap\Lt$. Assume that every $\Lt$ and
$\Rt$ class contains precisely one idempotent from $E$ denoted $a^{\ast}$
and $a^{+}$ respectively. The semigroup $S$ is called \emph{Ehresmann}
(or \emph{$E$-Ehresmann }if the set $E$ is to be emphasized\emph{)}
if $\Lt$ is a right congruence and $\Rt$ is a left congruence. An
Ehresmann semigroup is called right restriction if the identity $ea=a(ea)^{\ast}$
holds for every $e\in E$ and $a\in S$. The main class of semigroups
we consider in this paper are finite right restriction Ehresmann semigroups
with the additional property that the $\Ht$-class of every $e\in E$
is a group. We call such semigroups right restriction EI-Ehresmann
because the corresponding Ehresmann category is an EI-category (that
is, every endomorphism is an isomorphism). This is a generalization
of a class known in the literature as (finite) weakly ample semigroups
(see \cite[Section 4]{Gould2010}). Natural examples of such semigroups
are the monoid $\PT_{n}$ of all partial functions on the set $\{1,\ldots,n\}$
and the monoid of full domain partitions on a finite set introduced
in \cite{East2020}. In \secref{Classes_of_EU_Ehresmann} we prove
that this class is a pseudovariety in signature $(2,1,1)$. We also
discuss cases when such semigroups are embeddable in $\PT_{n}$ as
a bi-unary semigroup.

In \secref{Simple_modules} we show that the simple modules of the
semigroup algebra $\Bbbk S$ (over any field $\Bbbk$) of a finite
right restriction EI-Ehresmann semigroup are given by induced Sch\"{u}tzenberger
modules of the simple modules of the maximal subgroups of $S$. Unlike
inverse semigroups, Ehresmann semigroups algebras need not be semisimple,
even over fields of ``good'' characteristic. Therefore, an Ehresmann
semigroup can have non-semisimple projective modules. Another goal
of this paper is to describe the indecomposable projective modules
of finite right restriction EI-Ehresmann semigroups. For this we need
a construction that replaces Green's $\Lc$-classes by $\Lt$-classes
as the basis of induction. Let $S$ be a finite semigroup and let
$E\subseteq S$ be a subset of idempotents. Choose an $e\in E$ and
let $\Lt(e)$ be its $\Lt$-class. In \secref{Construction_of_Lt_modules}
we characterize certain cases where $G_{e}$ (the maximal subgroup
with unit element $e$) acts on the right of $\Lt(e)$. In particular
we prove that $G_{e}$ acts on the right of $\Lt(e)$ for every Ehresmann
semigroup. If we denote by $\Bbbk\Lt(e)$ the $\Bbbk$-vector space
with basis $\Lt(e)$, this implies that $\Bbbk\Lt(e){\displaystyle \bigotimes_{\Bbbk G_{e}}V}$
is a $\Bbbk S$-module for every $\Bbbk G_{e}$-module $V$.

A very successful way to study algebras of inverse semigroups or their
generalizations is to relate them to algebras of an associated category
(or a ``partial semigroup''). This is often done with an appropriate
M�bius function as in \cite{Guo2008,Guo2012,Ji2016,Solomon1967,Stein2016,Steinberg2006,Steinberg2008,Wang2017}.
In particular, the second author has proved in \cite{Stein2017,Stein2018erratum}
the following theorem. Let $S$ be a finite right restriction Ehresmann
semigroup. Then the semigroup algebra $\Bbbk S$ (over any field $\Bbbk$)
is isomorphic to the category algebra $\Bbbk\mathcal{C}$ for the
associated Ehresmann category $\mathcal{C}$. This result has led
to several applications in the representation theory of monoids of
partial functions \cite{Stein2019,Stein2020}. Now assume again that
$S$ is a finite right restriction EI-Ehresmann semigroup and also
assume that the order of every subgroup of $S$ is invertible in $\Bbbk$.
In \secref{ProjectiveModules} we use the isomorphism between the
semigroup algebra and the corresponding category algebra mentioned
above to prove that any module of the form $\Bbbk\Lt(e){\displaystyle \bigotimes_{\Bbbk G_{e}}V}$
(where $V$ is a simple $\Bbbk G_{e}$-module) is an indecomposable
projective module of $\Bbbk S$. Moreover, any indecomposable projective
module is of this form. This gives a purely semigroup theoretic construction
and it is very similar to other known constructions in the representation
theory of semigroups \cite{Ganyushkin2009a,Margolis2018B}. We also
give a formula for the dimension of this module over an algebraically
closed field. As an application, we consider the monoid $\PT_{n}$
in the case $\Bbbk=\mathbb{C}$. It was already proved that the simple
and indecomposable injective modules of $\mathbb{C}\PT_{n}$ can be
described using induced and co-induced representations respectively
(essentially this is a consequence of \cite[Theorem 4.4]{Margolis2011}).
In \secref{PartialFunctions} we complete this picture by giving a
description of the indecomposable projective modules of $\mathbb{C}\PT_{n}$
along with the natural epimorphism onto their simple images. The general
formula for the dimension of the indecomposable projective modules
boils down in this case to a combinatorial formula that sums up certain
Kostka numbers. We also draw some conclusions regarding the Cartan
matrix of $\mathbb{C}\PT_{n}$ by showing that certain entries of
it are zero. Finally, we give a counter-example to the above isomorphism
between the algebras of an Ehresmann semigroup and the associated
category in the case where $S$ is not right (or left) restriction
hence showing that this requirement cannot be omitted. We also give
an example of an EI-Ehresmann semigroup that is neither left nor right
restriction but its algebra is isomorphic to the algebra of the corresponding
Ehresmann category. 

\section{Preliminaries}

\subsection{Semigroups}

Let $S$ be a semigroup and let $S^{1}=S\cup\{1\}$ be the monoid
formed by adjoining a formal unit element. We denote by $\Hc$, $\Lc$,
$\Rc$ and $\Jc$ the usual Green's equivalence relations: 
\begin{align*}
a\Rc b\iff & aS^{1}=bS^{1}\\
a\Lc b\iff & S^{1}a=S^{1}b\\
a\Jc b\iff & S^{1}aS^{1}=S^{1}bS^{1}
\end{align*}
and $\Hc=\Rc\cap\Lc$. In a finite semigroup, it is known that $\Jc=\Rc\circ\Lc=\Lc\circ\Rc$,
which is Green's relation $\Dc$. We denote by $E(S)$ the set of
all idempotents of $S$. It is well-known that if $e\in E(S)$ then
its $\Hc$-class forms a group that we will denote by $G_{e}$. This
is the maximal subgroup of $S$ with unit element $e$. An element
$a\in S$ is called\emph{ regular} if there exists $b\in S$ such
that $aba=a$. Now assume that $S$ is finite. A $\Jc$-class $J$
is called \emph{regular} if it contains a regular element. It is known
that in this case all its elements are regular and every $\Rc$ and
every $\Lc$ class contains an idempotent. Moreover, if $J$ is a
regular $\Jc$-class then for every two idempotents $e_{1},e_{2}\in J$
we have $G_{e_{1}}\simeq G_{e_{2}}$. So we can denote this group
by $G_{J}$ - the unique maximal group associated with $J$. Let $J$
be a regular $\Jc$-class of $S$. The corresponding regular factor
is a $0$-simple finite semigroup and thus by Rees Theorem is isomorphic
to a regular Rees matrix semigroup $M^{0}(G_{e},A,B,P)$. See \cite[Chapter 3]{Howie1995}
and the Appendix of \cite{qTheory2009} for details. We denote by
$\B_{n}$ the monoid of all binary relation on the set $\{1,\ldots,n\}$
with composition as operation and by $\PT_{n}$ its submonoid of all
partial functions. The submonoid of all total functions is denoted
$\T_{n}$ and $\IS_{n}$ will be our notation for the submonoid consisting
of all injective partial functions. We remark that we compose binary
relations (and in particular partial functions) from right to left
in this paper. A standard textbook for elementary semigroup theory
is \cite{Howie1995}. For a text devoted to finite semigroups see
\cite{qTheory2009}.

\subsection{Ehresmann semigroups and categories\label{subsec:EhresmannSgp}}

Let $S$ be a semigroup and let $E\subseteq S$ be a subset of idempotents.
We define two preorders $\LeqLt$ and $\LeqRt$ on $S$ by 
\[
a\LeqLt b\iff(\forall e\in E\quad be=b\Rightarrow ae=a)
\]
\[
a\LeqRt b\iff(\forall e\in E\quad eb=b\Rightarrow ea=a).
\]
The corresponding equivalence relations are denoted $\Lt$ and $\Rt$.
\[
a\Lt b\iff(\forall e\in E\quad be=b\Leftrightarrow ae=a)
\]
\[
a\Rt b\iff(\forall e\in E\quad eb=b\Leftrightarrow ea=a).
\]

Moreover, we define $\Ht=\Lt\cap\Rt$. We remark that $\Lc\subseteq\Lt$,
$\Rc\subseteq\Rt$ and $\Hc\subseteq\Ht$. A subset $E\subseteq S$
of idempotents is called a \emph{subsemilattice} if it is a commutative
subsemigroup. It is well known that any commutative semigroup of idempotents
has the structure of a semilattice (i.e. a poset where every two elements
have a meet) if one defines $a\leq b$ whenever $ab=ba=a$. A semigroup
$S$ with a subsemilattice $E\subseteq S$ is called \emph{right $E$-Ehresmann
}if every $\Lt$-class contains a unique idempotent from $E$ and
$\Lt$ is a right congruence. We denote the unique idempotent in the
$\Lt$-class of $a$ by $a^{\ast}$. Note that $a^{\ast}$ is the
unique minimal element $e\in E$ such that $ae=a$. It is well known
that $\Lt$ is a right congruence if and only if the identity $(ab)^{\ast}=(a^{\ast}b)^{\ast}$
holds for every $a,b\in S$.

Dually, we can consider semigroups for which every $\Rt$ class contains
a unique idempotent. We denote the unique idempotent in the $\Rt$
class of $a$ by $a^{+}$. Such semigroup is called left $E$-Ehresmann
if $\Rt$ is a left congruence, or equivalently if $(ab)^{+}=(ab^{+})^{+}$
for every $a,b\in S$. A semigroup $S$ with a subsemilattice $E\subseteq S$
is called \emph{$E$-Ehresmann }if it is both left and right $E$-Ehresmann.
The semilattice $E$ is also called the set of \emph{projections }of
$S$\emph{.} If $S$ is an $E$-Ehresmann semigroup we can define
two partial orders by $a\leq_{l}b$ ($a\leq_{r}b$) if and only if
$a=ba^{\ast}$ (respectively, $a=a^{+}b$) which is also equivalent
to $a=be$ for some $e\in E$ (respectively, $a=eb$ for some $e\in E$).

It is known that $E$-Ehresmann semigroups form a variety of $(2,1,1)$-algebras
(where $^{+}$ and $^{\ast}$ are the unary operations). See \cite{Gould2010b}
for a list of the identities. We use them without reference in this
paper. Therefore, we can drop the explicit mention of the set $E$
and consider an Ehresmann semigroup $S$ to be a bi-unary semigroup
where the set of projections $E$ is determined implicitly by the
unary operations: 
\[
E=\{a^{\ast}\mid a\in S\}=\{a^{+}\mid a\in S\}
\]

A right (left) Ehresmann semigroup $S$ is called \emph{right (respectively,
left) restriction} if the ``right ample'' (respectively, ``left
ample'') identity $ea=a(ea)^{\ast}$ (respectively, $ae=(ae)^{+}a$)
holds for every $a\in S$ and $e\in E$.

For every Ehresmann semigroup $S$, we can associate a category ${\bf C}(S)=\mathcal{C}$
in the following way. The object set of ${\bf C}(S)$ is the set $E$
of projections and the morphisms of ${\bf C}(S)$ are in one-to-one
correspondence with elements of $S$. For every $a\in S$, we associate
a morphism $C(a)\in\mathcal{C}^{1}$ with domain $a^{+}$ and range
$a^{\ast}$. If the range of $C(a)$ is the domain of $C(b)$ the
composition $C(a)\cdot C(b)$ is defined to be $C(ab)$. The convention
is to compose morphisms in Ehresmann categories ``from left to right''.
However, for us it will be more convenient to use composition ``from
right to left''. In this case, we will associate to every $a\in S$
a morphism $C(a)\in\mathcal{C}^{1}$ with domain $a^{\ast}$ and range
$a^{+}$ and if the range of $C(a)$ is the domain of $C(b)$ the
composition $C(b)\cdot C(a)$ is defined to be $C(ba)$. For more
facts and proofs on Ehresmann semigroups and Ehresmann categories
the reader is referred to \cite{Gould2010,Gould2010b}.

\subsection{Algebras and representations}

Let $A$ be a $\mathbb{\Bbbk}$-algebra where $\Bbbk$ is a field.
Unless stated otherwise, we assume that algebras are unital and finite
dimensional. Likewise, when we say that $M$ is a module over $A$
(or an $A$-module) we mean that $M$ is a finite dimensional left
module over $A$. We will denote the set of \emph{simple} or \emph{irreducible}
modules of $A$ by $\Irr A$. Basic facts about the representation
theory of finite dimensional algebras are found in \cite{Assem2006},
which we will use without further mention. We will be interested in
semigroup algebras and category algebras. The semigroup algebra $\mathbb{\Bbbk}S$
of a finite semigroup $S$ is defined in the following way. It is
a vector space over $\Bbbk$ with basis the elements of $S$, that
is, it consists of all formal linear combinations: 
\[
\{k_{1}s_{1}+\ldots+k_{n}s_{n}\mid k_{i}\in\mathbb{\Bbbk},\,s_{i}\in S\}.
\]
The multiplication in $\mathbb{\Bbbk}S$ is the linear extension of
the semigroup multiplication. The category algebra $\mathbb{\Bbbk}\mathcal{C}$
of a finite category $\mathcal{C}$ is defined in the following way.
It is a vector space over $\mathbb{\Bbbk}$ with basis the morphisms
of $\mathcal{C}$, that is, it consists of all formal linear combinations:
\[
\{k_{1}m_{1}+\ldots+k_{n}m_{n}\mid k_{i}\in\mathbb{\Bbbk},\,m_{i}\in\mathcal{C}^{1}\}.
\]
The multiplication in $\mathbb{\Bbbk}\mathcal{C}$ is the linear extension
of the following: 
\[
m^{\prime}\cdot m=\begin{cases}
m^{\prime}m & \text{if \ensuremath{m^{\prime}\cdot m} is defined}\\
0 & \text{otherwise}.
\end{cases}
\]

\subsection{Representations of finite semigroups and groups}

The main reference for the representation theory of finite monoids
is \cite{Steinberg2016}. Let $S$ be a finite semigroup, let $J$
be regular $\Jc$-class and fix an idempotent $e\in J$. Let $\Bbbk$
be a field. The semigroup $S$ acts by partial functions on $\Lc(e)$,
the $\Lc$-class of $e$, according to 
\[
s\cdot x=\begin{cases}
sx & sx\in\Lc(e)\\
\text{undefined} & \text{otherwise}
\end{cases}
\]
for $s\in S$ and $x\in\Lc(e)$. Moreover, the group $G_{e}\simeq G_{J}$
acts on $\Lc(e)$ by right multiplication. Therefore, $\Bbbk\Lc(e)$
is a $\Bbbk S-\Bbbk G_{J}$ bi-module (where $\Bbbk\Lc(e)$ is the
set of all formal linear combinations of elements of $\Lc(e)$). Dually,
$\Bbbk\Rc(e)$ is a $\Bbbk G_{J}-\Bbbk S$ bi-module. Let $V$ be
a $\Bbbk G_{J}$-module, we define the induced module corresponding
to $J$ and $V$ as follows:

\[
\Ind_{G_{J}}(V)=\Bbbk\Lc(e)\bigotimes_{\Bbbk G_{J}}V
\]

It can be shown that this module does not depend on the specific choice
of $e\in J$. This justifies the notation $\Ind_{G_{J}}(V)$. If $V\in\Irr\Bbbk G_{J}$
then the maximal semisimple image of $\Ind_{G_{J}}(V)$ is a simple
$\Bbbk S$-module. Moreover, the Clifford-Munn-Ponizovskii theorem
states that this induces a one-to-one correspondence between simple
modules of $\Bbbk S$ and pairs $(J,V)$ where $J$ is regular $\Jc$
class and $V\in\Irr G_{J}$. Therefore, semisimple images of induced
modules of the above form give a complete list of all simple modules
of $\Bbbk S$ up to isomorphism. More details on Clifford-Munn-Ponizovskii
theory can be found in \cite{Ganyushkin2009a} and \cite[Chapter 5]{Steinberg2016}.

We assume familiarity with basic results on the representation theory
of finite groups \cite{Curtis1966}. For the representation theory
of the symmetric group and its connection to Young Tableaux, we refer
to \cite{James1981,Sagan2001}. We use these classical results without
further mention.

We say that a Young tableau is semi-standard if its columns are increasing
and its rows are non-decreasing. The Kostka number $K_{\lambda\mu}$
is the number of semistandard Young tableaux with shape $\lambda$
and content $\mu$ (see \cite[Section 2.11]{Sagan2001}).

For every composition $\lambda=[\lambda_{1},\ldots,\lambda_{r}]\vDash k$
we can associate the subgroup $S_{\lambda}=S_{\lambda_{1}}\times\cdots\times S_{\lambda_{r}}$
of $S_{k}$, called the \emph{Young subgroup} corresponding to\emph{
$\lambda$}. Let $\mu=[\mu_{1},\ldots,\mu_{r}]\vDash k$ and denote
by $\tr_{\mu}$ the trivial module of the algebra of the Young subgroup
$S_{\mu}$. The module $\Ind_{S_{\mu}}^{S_{k}}\tr_{\mu}$ is called
the \emph{Young module} corresponding to $\mu$. For every partition
$\lambda\vdash k$ the multiplicity of the Specht module $S^{\lambda}$
in the decomposition of $\Ind_{S_{\mu}}^{S_{k}}\tr_{\mu}$ is precisely
$K_{\lambda\mu}$ - the Kostka number of $\lambda,\mu$.

\section{\label{sec:Classes_of_EU_Ehresmann}Some classes of Ehresmann semigroups}

By an endomorphism of a category $\mathcal{C}$ we mean a morphism
$f$ whose domain and range are the same object. For an object $c$
of $\mathcal{C}$ we call $\mathcal{C}(c,c)$, the endomorphism monoid
at $c$. 
\begin{defn}
A category is called an \emph{EI-category} if every endomorphism is
an isomorphism, that is, the endomorphism monoids are groups. 
\end{defn}
There is a vast literature about representations of EI-categories
and their applications (for some examples see \cite{Li2011,Luck1989,Dieck1987,Webb2007}). 
\begin{defn}
Let $S$ be an Ehresmann semigroup. We call $S$ an \emph{EI-Ehresmann}
semigroup if the associated Ehresmann category ${\bf C}(S)$ is an
EI-category. 
\end{defn}
Note that $a\in\Ht(e)$ if and only if $C(a)$ is an endomorphism
of $e$ in the associated Ehresmann category ${\bf C}(S)$. Therefore
$S$ is EI-Ehresmann if and only if $\Ht(e)$ is a group for every
projection $e\in E$.

\subsection{EI-Ehresmann semigroups}

We start with some characterizations of EI-Ehresmann semigroups. Recall
that the \emph{sandwich set} of two idempotents $f_{1},f_{2}\in S$
is defined by 
\[
S(f_{1},f_{2})=\{h\in E(S)\mid f_{2}hf_{1}=h,\quad f_{1}hf_{2}=f_{1}f_{2}\}.
\]

Let $V(s)$ be the set of inverses of an element $s\in S$. It is
well known that the sandwich set is non-empty if and only if $f_{1}f_{2}$
is a regular element of $S$. Furthermore, 

\[
S(f_{1},f_{2})=\{h\in V(f_{1}f_{2})\cap E(S)\mid f_{2}hf_{1}=h\}.
\]

In particular, if $f_{1}$ and $f_{2}$ commute, then $S(f_{1},f_{2})$
is non-empty since in this case, $f_{1}f_{2}$ is an idempotent. See
\cite[Chapter 2.5]{Howie1995} for details. 
\begin{lem}
\label{lem:EU_Ehresmann_characterization}Let $S$ be a finite Ehresmann
semigroup, The following are equivalent. 
\begin{enumerate}
\item $S$ is an EI-Ehresmann semigroup. 
\item Every idempotent $f\in E(S)$ satisfies $f^{\ast}ff^{+}=f^{\ast}f^{+}$. 
\item The projection $f^{\ast}f^{+}$ is an inverse of $f$ for every idempotent
$f\in E(S)$. 
\item Every idempotent $f\in E(S)$ satisfies $f\in S(f^{\ast},f^{+}).$ 
\end{enumerate}
\end{lem}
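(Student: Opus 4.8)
The plan is to prove the four statements equivalent by establishing a cycle, say $(1)\Rightarrow(2)\Rightarrow(3)\Rightarrow(4)\Rightarrow(1)$, using the basic Ehresmann identities freely and the characterization of sandwich sets recalled just above the lemma. The key observation driving everything is that for an idempotent $f\in E(S)$, the morphism $C(f)$ in the Ehresmann category has domain $f^{\ast}$ and range $f^{+}$, and $f$ lies in $\Ht(e)$ for some projection $e$ exactly when $f^{\ast}=f^{+}=e$; more generally one wants to understand when $f$ behaves like an isomorphism between the objects $f^{\ast}$ and $f^{+}$. Since $f^{\ast}$ and $f^{+}$ are commuting idempotents (both in the semilattice $E$), the sandwich set $S(f^{\ast},f^{+})$ is automatically non-empty, and $f^{\ast}f^{+}=f^{+}f^{\ast}$ is itself an idempotent in $E$; this is what makes conditions (2), (3), (4) even sensible to state.

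For $(1)\Rightarrow(2)$: assuming $S$ is EI-Ehresmann, I would look at the element $g=f^{\ast}ff^{+}$ (or a suitable conjugate of $f$) and show it lands in some $\Ht$-class which is a group, then use that in a group of idempotents' $\Ht$-class the element must coincide with the identity, forcing $g$ to equal the projection $f^{\ast}f^{+}$. Concretely, one computes $(f^{\ast}ff^{+})^{\ast}$ and $(f^{\ast}ff^{+})^{+}$ using the identities $(ab)^{\ast}=(a^{\ast}b)^{\ast}$, $(ab)^{+}=(ab^{+})^{+}$, $f^{\ast\ast}=f^{\ast}$, $f^{++}=f^{+}$, and the fact that projections commute; the goal is to see both equal $f^{\ast}f^{+}$, so $f^{\ast}ff^{+}\in\Ht(f^{\ast}f^{+})$, which is a group, and being an idempotent in a group it equals the identity $f^{\ast}f^{+}$. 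For $(2)\Rightarrow(3)$: one directly verifies the two inverse equations $f(f^{\ast}f^{+})f=f$ and $(f^{\ast}f^{+})f(f^{\ast}f^{+})=f^{\ast}f^{+}$. Using $ff^{\ast}=f$ and $f^{+}f=f$ and then (2) these should reduce to identities; e.g. $f(f^{\ast}f^{+})f = f f^{\ast} f^{+} f$, and one massages this with commutativity of projections and (2) to get back $f$. For $(3)\Rightarrow(4)$: by the displayed description $S(f^{\ast},f^{+})=\{h\in V(f^{\ast}f^{+})\cap E(S)\mid f^{+}hf^{\ast}=h\}$ (note the order of $f^{\ast},f^{+}$ in the sandwich-set definition must be tracked carefully), it suffices to check that $f$ is an idempotent (given), that $f\in V(f^{\ast}f^{+})$, which is exactly (3) after checking $f^{\ast}f^{+}\in V(f)$ is symmetric, and that $f^{+}ff^{\ast}=f$, which follows again from $f^{+}f=f=ff^{\ast}$. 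For $(4)\Rightarrow(1)$: given $f\in S(f^{\ast},f^{+})$, I need to show $\Ht(e)$ is a group for each projection $e$; equivalently, that every idempotent $f$ with $f^{\ast}=f^{+}=e$ equals $e$. But if $f^{\ast}=f^{+}=e$ then $f\in S(e,e)$ means $efe=f$ and $efe=e^2=e$, so $f=e$; hence the only idempotent in $\Ht(e)$ is $e$ itself, and a finite monoid (the endomorphism monoid $\mathcal{C}(e,e)$) with a single idempotent is a group.

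The main obstacle I anticipate is bookkeeping the Ehresmann identities correctly — in particular getting the domain/range conventions straight (the paper switches to right-to-left composition, so $C(a)$ has domain $a^{\ast}$ and range $a^{+}$) and being careful with the asymmetry in the sandwich set definition $S(f_1,f_2)=\{h\mid f_2hf_1=h,\ f_1hf_2=f_1f_2\}$ when we plug in $f_1=f^{\ast}$, $f_2=f^{+}$. A secondary subtlety is the step $(4)\Rightarrow(1)$ where I claimed the endomorphism monoid at $e$ has a unique idempotent: I should justify that this monoid is finite (clear, since $S$ is finite) and invoke the standard fact that a finite monoid in which the identity is the only idempotent is a group — this is where finiteness of $S$ is genuinely used, matching the hypothesis. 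The computational steps $(2)\Rightarrow(3)$ and $(3)\Rightarrow(4)$ are routine once the identities are deployed, so I would not belabor them; the conceptual content is all in the translation between "$\Ht$-class is a group", "the canonical projection is an inverse", and "membership in the sandwich set".
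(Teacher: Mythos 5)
Your proposal is correct and follows essentially the same route as the paper: the same computation showing $(f^{\ast}ff^{+})^{\ast}=(f^{\ast}ff^{+})^{+}=f^{\ast}f^{+}$ for $(1)\Rightarrow(2)$, the same reduction of $(2)$, $(3)$, $(4)$ to the two displayed descriptions of the sandwich set, and the same closing observation that any idempotent $\Ht$-related to a projection $e$ must equal $e$, with finiteness entering exactly where you say (a finite monoid whose only idempotent is the identity is a group). The one step not to gloss over is the short verification that $f^{\ast}ff^{+}$ is itself an idempotent --- without that, lying in the group $\Ht(f^{\ast}f^{+})$ does not force it to equal $f^{\ast}f^{+}$ --- which the paper carries out explicitly.
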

\begin{proof}
\begin{casenv}
\item[$(1\implies2$)] Observe that $f^{\ast}ff^{+}$ is an idempotent since 
\[
f^{\ast}ff^{+}\cdot f^{\ast}ff^{+}=f^{\ast}ff^{\ast}f^{+}ff^{+}=f^{\ast}fff^{+}=f^{\ast}ff^{+}.
\]
Moreover, 
\[
\left(f^{\ast}ff^{+}\right)^{+}=\left(f^{\ast}\left(ff^{+}\right)^{+}\right)^{+}=f^{\ast}\left(ff^{+}\right)^{+}=f^{\ast}\left(ff\right)^{+}=f^{\ast}f^{+}
\]
where the first and third equalities follow from the left congruence
identity $(ab)^{+}=(ab^{+})^{+}$. Likewise 
\[
\left(f^{\ast}ff^{+}\right)^{\ast}=\left(\left(f^{\ast}f\right)^{\ast}f^{+}\right)^{\ast}=\left(f^{\ast}f\right)^{\ast}f^{+}=\left(ff\right)^{\ast}f^{+}=f^{\ast}f^{+}
\]
where the first and third equalities follow from the right congruence
identity $(ab)^{\ast}=(a^{\ast}b)^{\ast}$. Therefore $f^{\ast}ff^{+}$
is $\Ht$-equivalent to $f^{\ast}f^{+}$. Since $\Ht(f^{\ast}f^{+})$
has a unique idempotent we obtain that $f^{\ast}ff^{+}=f^{\ast}f^{+}$. 
\item[$(2\implies1$)] Let $e\in E$ and let $f\in E(S)$ be an idempotent such that $f\Ht e$
so $f^{+}=f^{\ast}=e$. The assumption $f^{\ast}ff^{+}=f^{\ast}f^{+}$
now reduces to 
\[
efe=ee=e
\]
and clearly $efe=f^{+}ff^{\ast}=f$ so $f=e$ as required. 
\item[$(2\iff3\iff4$)] Immediate from the properties of sandwich sets noted before this
Lemma. 
\end{casenv}
\end{proof}
We can interpret the last result in the language of universal algebra,
namely (pseudo)varieties and quasivarieties. We refer to \cite{Almeida1994,universalalgebra}
for the basics of varieties, quasivarieties and pseudovarieties and
other concepts from Universal Algebra. If $x$ is an element of a
free profinite semigroup, then $x^{\omega}$ is the unique idempotent
in the closed subsemigroup generated by $x$. In particular, if $S$
is a finite semigroup, then $x^{\omega}$ is the unique idempotent
in the subsemigroup generated by $x$. 

The equivalence $1\iff2$ of \lemref{EU_Ehresmann_characterization}
immediately implies the following corollary in the language of pseudovarieties. 
\begin{cor}
\label{cor:Pseudo_variety_EU_Ehresmann}The class of all finite EI-Ehresmann
semigroups is a pseudovariety of finite bi-unary semigroups defined
by the identities of Ehresmann semigroups (see \cite{Gould2010b})
and the profinite identity 
\[
\left(x^{\omega}\right)^{\ast}x^{\omega}\left(x^{\omega}\right)^{+}=\left(x^{\omega}\right)^{\ast}\left(x^{\omega}\right)^{+}.
\]
\end{cor}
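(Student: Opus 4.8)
The plan is to deduce this corollary directly from the equivalence $(1)\iff(2)$ of \lemref{EU_Ehresmann_characterization} together with the standard fact that finite $E$-Ehresmann semigroups form a pseudovariety of $(2,1,1)$-algebras (indeed a variety, as recalled in \subsecref{EhresmannSgp}, citing \cite{Gould2010b}). First I would recall that a class of finite algebras in a fixed signature is a pseudovariety precisely when it is closed under finite direct products, subalgebras, and quotients, and that (by Reiterman's theorem) such classes are exactly those defined by profinite identities. So it suffices to exhibit a set of (pro)finite identities in the signature $(2,1,1)$ whose finite models are exactly the finite EI-Ehresmann semigroups.

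The identities of Ehresmann semigroups cut the class of all finite bi-unary semigroups down to the finite Ehresmann semigroups. On this subclass, \lemref{EU_Ehresmann_characterization}$(1\iff2)$ says that being EI-Ehresmann is equivalent to the condition that $f^{\ast}ff^{+}=f^{\ast}f^{+}$ for every idempotent $f\in E(S)$. The key step is to rewrite ``for every idempotent $f$'' as a single profinite identity: since in a finite semigroup $x^{\omega}$ denotes the unique idempotent in the subsemigroup generated by $x$, and since every idempotent $f$ equals $f^{\omega}$ (so the map $x\mapsto x^{\omega}$ surjects onto $E(S)$), the universally quantified statement ``$\forall f\in E(S)\ f^{\ast}ff^{+}=f^{\ast}f^{+}$'' holds in $S$ if and only if the identity
\[
\left(x^{\omega}\right)^{\ast}x^{\omega}\left(x^{\omega}\right)^{+}=\left(x^{\omega}\right)^{\ast}\left(x^{\omega}\right)^{+}
\]
holds in $S$ for all $x\in S$. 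This is the only genuine content, and it is essentially bookkeeping: one direction is immediate (specialize $x$ to be an idempotent $f$, so $x^{\omega}=f$), and the other follows because $x^{\omega}$ is always an idempotent, so the instance of the identity at $x$ is exactly the instance of the Lemma's condition at the idempotent $f=x^{\omega}$.

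Putting this together: the finite bi-unary semigroups satisfying the Ehresmann identities and the displayed profinite identity are exactly the finite Ehresmann semigroups $S$ with $f^{\ast}ff^{+}=f^{\ast}f^{+}$ for all $f\in E(S)$, which by \lemref{EU_Ehresmann_characterization} are exactly the finite EI-Ehresmann semigroups. Hence this class is defined by (pro)finite identities and is therefore a pseudovariety. The main obstacle is purely expository rather than mathematical: one must be careful that $x\mapsto x^{\omega}$ ranges over \emph{all} idempotents of a finite semigroup (so that quantifying over $x$ recovers quantifying over $f\in E(S)$), and that $\omega$ is a legitimate operation in the free profinite object over the signature $(2,1,1)$, so that the displayed expression is a bona fide profinite identity; both are standard facts from \cite{Almeida1994}. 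No new computation beyond \lemref{EU_Ehresmann_characterization} is needed.
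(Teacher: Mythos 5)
Your proposal is correct and follows essentially the same route as the paper: both deduce the corollary from the equivalence $(1)\iff(2)$ of \lemref{EU_Ehresmann_characterization}, using that $x^{\omega}$ is always an idempotent (for one direction) and that $f^{\omega}=f$ for every idempotent $f$ (for the other), so the profinite identity is exactly condition $(2)$ quantified over all idempotents. The paper's proof is just a terser version of the same bookkeeping you describe.
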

\begin{proof}
Let $S$ is a finite Ehresmann semigroup. If $S$ is EI-Ehresmann
then the required profinite identity holds by \lemref{EU_Ehresmann_characterization}
because $x^{\omega}$ is an idempotent for every $x\in S$ . In the
other direction, take $f\in E(S)$. Then $f^{\omega}=f$ so the assumption
yields 
\[
f^{\ast}ff^{+}=f^{\ast}f^{+}
\]
which implies that $S$ is EI-Ehresmann by \lemref{EU_Ehresmann_characterization}. 
\end{proof}
Let $V$ be a pseudovariety of finite groups and let $\overline{V}$
be the pseudovariety of finite semigroups whose subgroups belong to
$V$. Note that if $S$ is a finite EI-Ehresmann semigroups then every
$\Ht(e)$-class is a group and in particular, $\Ht(e)=\Hc(e)$ for
every $e\in E$. Therefore the pseudovariety of all finite Ehresmann
semigroups whose $\Ht$-classes (or equivalently, the endomorphism
groups in the category ${\bf C}(S)$) belongs to $V$ is precisely
the intersection between $\overline{V}$ and the pseudovariety of
\corref{Pseudo_variety_EU_Ehresmann}. 
\begin{example}
Consider the pseudovariety of all Ehresmann semigroups such that the
corresponding Ehresmann category ${\bf C}(S)$ is locally trivial
(i.e, the only endomorphisms are the identity functions). Some natural
monoids which belong to this pseudovariety are discussed in \cite{Stein2020}.
In this case the $\Ht$-classes are trivial so the subgroups of $S$
are trivial. Such semigroups are called aperiodic (or $\Hc$-trivial).
This pseudovariety can be described by the Ehresmann identities, and
the profinite identities 
\[
\left(x^{\omega}\right)^{\ast}x^{\omega}\left(x^{\omega}\right)^{+}=\left(x^{\omega}\right)^{\ast}\left(x^{\omega}\right)^{+},\quad x^{\omega}x=x^{\omega}
\]
where the second one is for the aperiodicity of $S$. 
\end{example}

\subsection{\label{subsec:Right_restriction_EU_Ehresmann}Right restriction EI-Ehresmann
semigroups}

Now we turn to the main class of semigroups that we want to discuss
in this paper: Semigroups that are both EI-Ehresmann and right restriction.
We start with the following lemma. 
\begin{lem}
\label{lem:Right_restriction_Ehresmann_leq}Let $S$ be a right restriction
Ehresmann semigroup. Then $f^{+}\leq f^{\ast}$ for every idempotent
$f\in E(S)$. 
\end{lem}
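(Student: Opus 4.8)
The plan is to bypass any direct manipulation of the product $f^{+}f^{\ast}$ — which, since a product of two projections is again a projection, leads only in circles via commutativity of $E$ — and instead to establish the single identity $f^{\ast}f=f$. Granting that, the lemma is immediate: $f^{+}$ is, dually to the description of $a^{\ast}$ recalled above, the least element of $E$ that is a left identity for $f$, so the fact that $f^{\ast}\in E$ is such a left identity forces $f^{+}\le f^{\ast}$.

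To prove $f^{\ast}f=f$, first I would record the elementary fact $f(f^{\ast}f)=(ff^{\ast})f=f^{2}=f$, which only uses the Ehresmann identity $aa^{\ast}=a$ and the idempotency of $f$. Applying the operation $(\,\cdot\,)^{\ast}$ to this equality and invoking the right-congruence identity $(ab)^{\ast}=(a^{\ast}b)^{\ast}$ together with $f^{\ast}f^{\ast}=f^{\ast}$ gives
\[
f^{\ast}=\bigl(f(f^{\ast}f)\bigr)^{\ast}=\bigl(f^{\ast}(f^{\ast}f)\bigr)^{\ast}=(f^{\ast}f)^{\ast},
\]
so the projection attached to $f^{\ast}f$ is just $f^{\ast}$ itself.

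Now the right restriction (right ample) axiom $ea=a(ea)^{\ast}$, applied with the projection $e=f^{\ast}$ and with $a=f$, yields $f^{\ast}f=f(f^{\ast}f)^{\ast}$; substituting $(f^{\ast}f)^{\ast}=f^{\ast}$ from the previous step and then using $ff^{\ast}=f$ gives $f^{\ast}f=ff^{\ast}=f$, as desired, and the lemma follows.

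The argument has essentially no obstacle once the right target has been identified; the one point that needs a moment's thought is the middle step, that $(f^{\ast}f)^{\ast}$ collapses to $f^{\ast}$, since this is precisely what lets the right ample axiom be applied to any effect. Note also that the right restriction hypothesis is used only through the element $f$ itself: on projections the ample identity degenerates to the commutativity of $E$ and yields nothing, which is why the temptation to work with $f^{+}f^{\ast}$ directly must be resisted.
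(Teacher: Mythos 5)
Your proof is correct and follows essentially the same route as the paper's: both arguments hinge on computing $(f^{\ast}f)^{\ast}=f^{\ast}$ and then applying the right ample identity with $e=f^{\ast}$, $a=f$ to get $f^{\ast}f=f(f^{\ast}f)^{\ast}=ff^{\ast}=f$. The only (immaterial) difference is at the very end: you invoke the minimality of $f^{+}$ among left identities of $f$ in $E$, whereas the paper wraps the same computation in $(\,\cdot\,)^{+}$ to conclude $f^{\ast}f^{+}=(f^{\ast}f)^{+}=f^{+}$ directly.
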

\begin{proof}
\footnote{We thank Professor Victoria Gould for this proof.}Let $f\in E(S)$
be an idempotent. According to the right congruence identity 
\[
f^{\ast}f^{+}=(f^{\ast}f^{+})^{+}=(f^{\ast}f)^{+}
\]
and according to the right ample identity 
\[
(f^{\ast}f)^{+}=\left(f(f^{\ast}f)^{\ast}\right)^{+}.
\]
Now, by the left congruence identity 
\[
\left(f(f^{\ast}f)^{\ast}\right)^{+}.=\left(f(ff)^{\ast}\right)^{+}=(ff^{\ast})^{+}=f^{+}
\]
so $f^{\ast}f^{+}=f^{+}$ and thus $f^{+}\leq f^{\ast}$ as required. 
\end{proof}
\begin{lem}
\label{lem:right_restriction_EU_Ehresmann_characterization}Let $S$
be a finite right restriction Ehresmann semigroup. The following are
equivalent. 
\begin{enumerate}
\item $S$ is an EI-Ehresmann semigroup. 
\item Every idempotent $f\in E(S)$ satisfies $ff^{+}=f^{+}$. 
\item Every regular element $a\in S$ satisfies $a\Rc a^{+}$. 
\end{enumerate}
\end{lem}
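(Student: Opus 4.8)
The plan is to prove the three-way equivalence by establishing $(1)\Rightarrow(2)\Rightarrow(3)\Rightarrow(1)$, using \lemref{Right_restriction_Ehresmann_leq} throughout to exploit that $f^{+}\leq f^{\ast}$ in a right restriction Ehresmann semigroup. Note first that since $f^{+}\leq f^{\ast}$ we have $f^{\ast}f^{+}=f^{+}f^{\ast}=f^{+}$, so the condition $f^{\ast}ff^{+}=f^{\ast}f^{+}$ appearing in \lemref{EU_Ehresmann_characterization} simplifies: the right-hand side is just $f^{+}$, and the left-hand side is $f^{\ast}(ff^{+})$. This is the lever that connects the present lemma to the characterization we already have.

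For $(1)\Rightarrow(2)$: assuming $S$ is EI-Ehresmann, \lemref{EU_Ehresmann_characterization} gives $f^{\ast}ff^{+}=f^{\ast}f^{+}=f^{+}$. I would like to cancel the leading $f^{\ast}$. Multiply on the left by... actually the cleaner route is to observe $ff^{+}\leq_{r} f$ (since $ff^{+}=ff^{+}$ with $f^{+}\in E$), so $(ff^{+})^{+}\leq f^{+}$; combined with computing $(ff^{+})^{+}=(ff)^{+}=f^{+}$ via the left congruence identity, we get $(ff^{+})^{+}=f^{+}$. Then from $f^{\ast}(ff^{+})=f^{+}$ apply $^{+}$ to both sides: the left side gives $(f^{\ast}(ff^{+}))^{+}=(f^{\ast}(ff^{+})^{+})^{+}=(f^{\ast}f^{+})^{+}=(f^{+})^{+}=f^{+}$, which is consistent but not yet the claim. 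Instead I would argue that $ff^{+}$ and $f^{+}$ are $\Lt$-equivalent: $(ff^{+})^{\ast}=(f^{\ast}f^{+})^{\ast}=(f^{+})^{\ast}=f^{+}$ using the right congruence identity and $f^{\ast}f^{+}=f^{+}$; and $(f^{+})^{\ast}=f^{+}$. So both $ff^{+}$ and $f^{+}$ are idempotents lying below (in $\leq_r$ and $\leq_l$) with projection $f^{+}$, and since $f^{\ast}(ff^{+})=f^{+}$ forces $ff^{+}\leq_r$-dominated by $f$... The most robust path: from $f^{\ast}ff^{+}=f^{+}$ and right restriction ($f^{\ast}\in E$, so $f^{\ast}(ff^{+})=(ff^{+})(f^{\ast}(ff^{+}))^{\ast}=(ff^{+})\cdot f^{+}=ff^{+}$, using the ample identity and the computation $(f^{\ast}ff^{+})^{\ast}=f^{+}$ just above). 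Hence $ff^{+}=f^{\ast}ff^{+}=f^{+}$, which is exactly $(2)$.

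For $(2)\Rightarrow(3)$: let $a\in S$ be regular. Since $a\leq_r a$ trivially and $a=a^{+}a$, and regular elements in a finite semigroup satisfy $a\,\Rc\,a^{+}$ precisely when $a^{+}\in aS^1$, I would produce $e\in E(S)$ with $ae=a$ and then apply $(2)$ to a suitable idempotent built from $a$. Concretely, regularity of $a$ gives an idempotent $f$ with $a\,\Rc\,f$ in Green's sense within the regular $\Jc$-class, so $af^{\prime}$-type manipulations reduce to showing $f^{+}=a^{+}$; using $(2)$ on $f$ gives $ff^{+}=f^{+}$, hence $f^{+}\leq_r f$, and then $a\,\Rc\,f$ together with $\Rc\subseteq\Rt$ and uniqueness of the projection in an $\Rt$-class yields $a^{+}=f^{+}$ and $a\,\Rc\,f\,\Rc\,f^{+}=a^{+}$.

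For $(3)\Rightarrow(1)$: take any idempotent $f\in E(S)$; it is regular, so $(3)$ gives $f\,\Rc\,f^{+}$, whence $f^{+}\in fS^{1}$, say $f^{+}=fs$. Then $ff^{+}=f\cdot fs=fs=f^{+}$. Now apply $(2)\Rightarrow$ the condition of \lemref{EU_Ehresmann_characterization}: $f^{\ast}ff^{+}=f^{\ast}f^{+}$ is immediate since $ff^{+}=f^{+}$. By \lemref{EU_Ehresmann_characterization}, $S$ is EI-Ehresmann. The main obstacle I anticipate is the bookkeeping in $(1)\Rightarrow(2)$ — getting from $f^{\ast}ff^{+}=f^{+}$ to $ff^{+}=f^{+}$ requires carefully combining the right ample identity with the projection computations $(f^{\ast}ff^{+})^{\ast}=f^{+}$, and one must be careful that the ample identity $ea=a(ea)^{\ast}$ is being applied with $e=f^{\ast}$ and $a=ff^{+}$ in the correct order; everything else is a short Green's-relations argument leveraging $\Rc\subseteq\Rt$ and uniqueness of projections.
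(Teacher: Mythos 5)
Your proof is correct and follows the same cyclic scheme $(1)\Rightarrow(2)\Rightarrow(3)\Rightarrow(1)$ as the paper, resting on the same two ingredients: Lemma \ref{lem:EU_Ehresmann_characterization} and Lemma \ref{lem:Right_restriction_Ehresmann_leq}. The implications $(2)\Rightarrow(3)$ and $(3)\Rightarrow(1)$ are essentially identical to the paper's. The only real divergence is in $(1)\Rightarrow(2)$: the paper uses $f^{+}\leq f^{\ast}$ to get $f^{\ast}f=f^{\ast}f^{+}f=f^{+}f=f$ in one line, after which $ff^{+}=f^{\ast}ff^{+}=f^{\ast}f^{+}=f^{+}$ is immediate, whereas you reach the same conclusion by applying the right ample identity to $f^{\ast}(ff^{+})$ and computing $(f^{\ast}ff^{+})^{\ast}=f^{+}$; both are valid, the paper's being slightly shorter. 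One small caution about the exploratory portion of your $(1)\Rightarrow(2)$: the claim ``$ff^{+}\leq_{r}f$ since $f^{+}\in E$'' is backwards --- multiplying by a projection on the right gives $\leq_{l}$, not $\leq_{r}$ --- but this occurs only in a route you abandon, and your final argument does not depend on it.
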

\begin{proof}
\begin{casenv}
\item[($1\implies2$)] If $S$ is EI-Ehresmann then $f^{\ast}ff^{+}=f^{\ast}f^{+}$ by \lemref{EU_Ehresmann_characterization}.
\lemref{Right_restriction_Ehresmann_leq} implies that 
\[
f^{\ast}f=f^{\ast}f^{+}f=f^{+}f=f
\]
so we establish $ff^{+}=f^{+}$ as required. 
\item[($2\implies3$)] If $a$ is regular then $a\Rc f$ for some idempotent $f\in E(S)$
and clearly $f^{+}=a^{+}$. Now $ff^{+}=f^{+}$ implies $f\Rc f^{+}$
hence $a\Rc a^{+}$ as required. 
\item[($3\implies1$)] Every idempotent $f\in E(S)$ is regular, so $fRf^{+}$ which says
that $fx=f^{+}$ for some $x\in S$. Now 
\[
ff^{+}=ffx=fx=f^{+}
\]
so $f^{\ast}ff^{+}=f^{\ast}f^{+}$ hence $S$ is EI-Ehresmann by \lemref{EU_Ehresmann_characterization}. 
\end{casenv}
\end{proof}
\begin{rem}
A sandwich set $S(f_{1},f_{2})$, if not empty, is a rectangular band
(\cite[Proposition 2.5.3]{Howie1995}), i.e., a subsemigroup satisfying
$ghg=g$ for every $g,h\in S(f_{1},f_{2})$. The property $ff^{+}=f^{+}$
implies that $S(f^{\ast},f^{+})$ is a right zero semigroup (a rectangular
band with one $\Rc$-class). Indeed if $h\in S(f^{\ast},f^{+})$ then
\[
fh=ff^{+}hf^{\ast}=f^{+}hf^{\ast}=h
\]
and hence 
\[
f=fhf=hf
\]
so every element of $S(f^{\ast},f^{+})$ is $\Rc$-equivalent to $f$. 
\end{rem}
\begin{example}
\label{exa:EI_Ehresmann_not_Right_left_restriction}The following
example, taken from \cite[Example 5.12]{Stein2017}, shows that not
every EI-Ehresmann semigroup is right or left restriction, so right
restriction EI-Ehresmann semigroups form a proper subclass of EI-Ehresmann
semigroups.

Recall that $\T_{n}$ is the monoid of all total functions on an n-element
set. Denote by $\id$ the identity function and by ${\bf k}$ the
constant functions that sends every element to $k$. Define $S$ to
be the subsemigroup of $\T_{2}^{\op}\times\T_{2}$ containing the
six elements 
\[
({\bf 1,1}),({\bf 2},{\bf 1}),({\bf 1},{\bf 2}),({\bf 2},{\bf 2}),({\bf 1},\id),(\id,{\bf 1})
\]

It can be checked that $S$ is an $E$-Ehresmann semigroup with 
\[
E=\{({\bf 1,1}),({\bf 1},\id),(\id,{\bf 1})\}
\]
as its set of projections. The corresponding Ehresmann category $\mathcal{C}$
is given by the following drawing (recall that composition is ``right
to left''):
\begin{center}
\begin{tikzpicture}\path (0,2) node [shape=circle,draw] (e) {} edge [loop above] node {$ (\bf{1},\id)$} (); \path (4,2) node [shape=circle,draw] (f) {} edge [loop above] node {$(\id,\bf{1})$} ();\path (2,0) node [shape=circle,draw] (a) {} edge [loop below] node {$(\bf{1},\bf{1})$} (); \draw[thick,<-] (e)--(f) node [above,midway] {$ (\bf{2},\bf{2})$}; \draw[thick,<-] (e)--(a) node [left,midway] {$ (\bf{1},\bf{2})$}; \draw[thick,<-] (a)--(f) node [right,midway] {$ (\bf{2},\bf{1})$}; \end{tikzpicture} 
\par\end{center}
Clearly, $\mathcal{C}$ is an EI-category (in fact, it is even locally
trivial) so $S$ is an EI-Ehresmann semigroup. However, it is easy
to see that $({\bf 2},{\bf 2})^{+}=({\bf 1},\id)$ and $({\bf 2},{\bf 2})^{\ast}=(\id,{\bf 1})$
but $({\bf 2},{\bf 2})$ is not $\Lc$-related to $({\bf 1},\id)$
and not $\Rc$-related to $(\id,{\bf 1})$ so $S$ is neither left
nor right restriction. Note also that the sandwich set 
\[
S((\id,{\bf 1}),({\bf 1},\id))=\{({\bf 1,1}),({\bf 2},{\bf 1}),({\bf 1},{\bf 2}),({\bf 2},{\bf 2})\}
\]
is a rectangular band but not a right\textbackslash{}left zero semigroup. 
\end{example}
As in the case of general EI-Ehresmann semigroups, we have an immediate
corollary of \lemref{right_restriction_EU_Ehresmann_characterization}. 
\begin{cor}
The class of all finite right restriction EI-Ehresmann semigroups
is a pseudovariety of finite bi-unary semigroups defined by the identities
of Ehresmann semigroups, the right ample identity $b^{\ast}a=a(b^{\ast}a)^{\ast}$
and the profinite identity 
\[
x^{\omega}\left(x^{\omega}\right)^{+}=\left(x^{\omega}\right)^{+}.
\]
\end{cor}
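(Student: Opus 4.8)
The plan is to imitate the proof of \corref{Pseudo_variety_EU_Ehresmann}, simply replacing the characterization in \lemref{EU_Ehresmann_characterization} by the one in \lemref{right_restriction_EU_Ehresmann_characterization}. First I would observe that the finite right restriction Ehresmann semigroups already form a pseudovariety: $E$-Ehresmann semigroups are a variety of $(2,1,1)$-algebras, and the right restriction requirement ``$ea=a(ea)^{\ast}$ for all $e\in E$'' is the genuine $(2,1,1)$-identity $b^{\ast}a=a(b^{\ast}a)^{\ast}$ once one uses $E=\{b^{\ast}\mid b\in S\}$. Hence this class is closed under subalgebras, quotients and finite direct products, and restricting to finite members gives a pseudovariety defined by the Ehresmann identities together with the right ample identity.

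Next, within this pseudovariety I would show that being EI-Ehresmann is captured exactly by the profinite identity $x^{\omega}\left(x^{\omega}\right)^{+}=\left(x^{\omega}\right)^{+}$. For the ``only if'' direction, let $S$ be a finite right restriction EI-Ehresmann semigroup. By the equivalence $(1\Leftrightarrow2)$ of \lemref{right_restriction_EU_Ehresmann_characterization}, every idempotent $f\in E(S)$ satisfies $ff^{+}=f^{+}$; applying this to $f=x^{\omega}$, which is idempotent for every $x\in S$, yields the identity. Conversely, if $S$ is a finite right restriction Ehresmann semigroup satisfying the profinite identity, then for any $f\in E(S)$ we have $f^{\omega}=f$, so $ff^{+}=f^{+}$, and \lemref{right_restriction_EU_Ehresmann_characterization} forces $S$ to be EI-Ehresmann.

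Combining the two observations, the class of finite right restriction EI-Ehresmann semigroups is the intersection of the pseudovariety of finite right restriction Ehresmann semigroups with the class of finite bi-unary semigroups satisfying $x^{\omega}\left(x^{\omega}\right)^{+}=\left(x^{\omega}\right)^{+}$; an intersection of pseudovarieties is again a pseudovariety, and it is evidently defined by the asserted list of identities. I do not expect any genuine obstacle, since the argument is a direct transcription of the proof of \corref{Pseudo_variety_EU_Ehresmann}. The only point deserving a moment's care is the bookkeeping in the first paragraph: confirming that the informal ``$e\in E$'' phrasing of right restriction is literally an identity in the signature $(2,1,1)$, so that the class is genuinely equational and the pseudovariety machinery applies.
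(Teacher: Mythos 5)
Your proposal is correct and is exactly the argument the paper intends: the corollary is stated as an immediate consequence of Lemma \ref{lem:right_restriction_EU_Ehresmann_characterization}, obtained by transcribing the proof of Corollary \ref{cor:Pseudo_variety_EU_Ehresmann} with condition $ff^{+}=f^{+}$ in place of $f^{\ast}ff^{+}=f^{\ast}f^{+}$, using that $x^{\omega}$ is idempotent in a finite semigroup and that $f^{\omega}=f$ for any idempotent $f$. Your extra care in noting that the right ample condition is a genuine $(2,1,1)$-identity is a reasonable bookkeeping point that the paper leaves implicit.
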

The most natural example of a right restriction EI-Ehresmann semigroup
is the monoid $\PT_{n}$ of all partial functions on the set $\{1,\ldots,n\}$.
In \secref{PartialFunctions} we will consider at length its representation
theory. Let $A\subseteq\{1,\ldots,n\}$ and denote by $\id_{A}$ the
partial identity function on $A$. It is clear that $E=\{\id_{A}\mid A\subseteq\{1,\ldots,n\}\}$
is a subsemilattice of $\PT_{n}$ and it is well known that $\PT_{n}$
is a right restriction $E$-Ehresmann semigroup. Let $t:A\to B$ be
a partial function. We denote by $\dom(t)\subseteq A$ and $\im(t)\subseteq B$
the domain and image of $t$ and it is clear that $t^{+}=1_{\im(t)}$
and $t^{\ast}=1_{\dom(t)}$. It is also clear that an idempotent $f\in E(\PT_{n})$
must satisfy $\left.f\right|_{\im(f)}=1_{\im(f)}$ so the only idempotent
$f\in\PT_{n}$ with $\dom(f)=\im(f)$ is $f=1_{\dom(f)}$. Therefore,
$\PT_{n}$ is indeed an EI-Ehresmann semigroup. Since finite right
restriction EI-Ehresmann semigroups form a pseudovariety, it is clear
that any $(2,1,1)$-subalgebra of $\PT_{n}$ is also a right restriction
EI-Ehresmann semigroup. We now show that at least for regular semigroups
the converse also holds. 
\begin{prop}
Let $S$ be a regular finite right restriction EI-Ehresmann semigroup.
There is an embedding of bi-unary semigroups $\Phi:S\to\PT_{n}$ for
$n=|S|$. 
\end{prop}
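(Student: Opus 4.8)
The idea is to represent $S$ inside $\PT_{S}$ (so that $n=|S|$) by a Wagner--Preston/Schein-type representation built from left multiplication with restricted domains. For $a\in S$ set
\[
D_a=\{x\in S:a^{\ast}x=x\}=\{x\in S:x^{+}\le a^{\ast}\}
\]
(the two descriptions agree: $a^{\ast}x=x$ gives $x^{+}=(a^{\ast}x^{+})^{+}=a^{\ast}x^{+}$ by the left congruence identity, and conversely $a^{\ast}x=a^{\ast}x^{+}x=x^{+}x=x$). Define $\Phi(a)\in\PT_{S}$ by $\dom\Phi(a)=D_a$ and $\Phi(a)(x)=ax$. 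Faithfulness is immediate: $a^{\ast}\in D_a$ since $a^{\ast}$ is idempotent, and $\Phi(a)(a^{\ast})=aa^{\ast}=a$; moreover $\Phi(a)=\Phi(b)$ forces $D_a=D_b$, hence $b^{\ast}a^{\ast}=a^{\ast}$ and $a^{\ast}b^{\ast}=b^{\ast}$, hence $a^{\ast}=b^{\ast}$ by commutativity of $E$, so $a=\Phi(a)(a^{\ast})=\Phi(b)(b^{\ast})=b$. Compatibility with $\ast$ is also clear: since $(a^{\ast})^{\ast}=a^{\ast}$ we get $D_{a^{\ast}}=D_a$ and $\Phi(a^{\ast})(x)=a^{\ast}x=x$ on $D_a$, so $\Phi(a^{\ast})=1_{D_a}=\Phi(a)^{\ast}$.

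Next I would check that $\Phi$ is a homomorphism, i.e. $\Phi(a)\Phi(b)=\Phi(ab)$ (composition right to left). The values agree where both are defined, since $\Phi(a)(\Phi(b)(x))=a(bx)=(ab)x$. For the domains, one inclusion uses $(ab)^{\ast}=(a^{\ast}b)^{\ast}\le b^{\ast}$ and the right restriction identity $a^{\ast}b=b(a^{\ast}b)^{\ast}$: if $x^{+}\le(ab)^{\ast}$ then $x\in D_b$ and $bx=b(a^{\ast}b)^{\ast}x=a^{\ast}bx$, so $bx\in D_a$. For the reverse inclusion, if $x\in D_b$ and $bx\in D_a$, then from $(bx)^{+}=(bx^{+})^{+}\le a^{\ast}$ we get $a^{\ast}bx^{+}=bx^{+}$, which by $a^{\ast}b=b(a^{\ast}b)^{\ast}$ reads $b\bigl((a^{\ast}b)^{\ast}x^{+}\bigr)=bx^{+}$; now $(a^{\ast}b)^{\ast}x^{+}$ and $x^{+}$ are projections $\le b^{\ast}$, and for any projection $g\le b^{\ast}$ one has $(bg)^{\ast}=(b^{\ast}g)^{\ast}=g$, so applying $\ast$ yields $(a^{\ast}b)^{\ast}x^{+}=x^{+}$, i.e. $x\in D_{ab}$. (This step uses only that $S$ is right restriction Ehresmann.)

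The crux is compatibility with $^{+}$, and this is the one place where regularity and the EI hypothesis enter. One always has $\im\Phi(a)\subseteq D_{a^{+}}$ because $a^{+}(ax)=ax$. For the reverse, fix $y$ with $a^{+}y=y$. Since $S$ is regular, $a$ is regular, so by \lemref{right_restriction_EU_Ehresmann_characterization} we have $a\,\Rc\,a^{+}$; choose $c\in S^{1}$ with $ac=a^{+}$ and set $x=a^{\ast}cy$. Then $x\in D_a$ and $\Phi(a)(x)=aa^{\ast}cy=acy=a^{+}y=y$, so $y\in\im\Phi(a)$. Hence $\im\Phi(a)=D_{a^{+}}=\dom\Phi(a^{+})$, and since $\Phi(a^{+})=1_{D_{a^{+}}}$ (as in the $\ast$-case) we conclude $\Phi(a^{+})=\Phi(a)^{+}$. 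Therefore $\Phi$ is an embedding of bi-unary semigroups $S\hookrightarrow\PT_{S}\cong\PT_{n}$.

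\textbf{Main obstacle.} Everything except the $^{+}$-compatibility is a routine manipulation with the $(2,1,1)$-identities; without regularity the representation $\Phi$ need not respect $^{+}$ (the equality $ac=a^{+}$ may fail). The real content is recognizing that the right restriction EI-Ehresmann hypotheses are exactly what force $a\,\Rc\,a^{+}$ for regular $a$, which is precisely what makes $\im\Phi(a)$ coincide with the projection that $a^{+}$ demands.
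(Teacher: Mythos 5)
Your proof is correct, and its skeleton matches the paper's at the decisive point: both arguments reduce the $^{+}$-compatibility of the embedding to the fact that $a\,\Rc\,a^{+}$ for every (regular) element, which is exactly item (3) of \lemref{right_restriction_EU_Ehresmann_characterization}. The difference is in how the rest is handled. The paper simply invokes the ``Cayley theorem'' for right restriction semigroups (\cite[Theorem 6.2]{Gould2010}) as a black box to obtain a semigroup monomorphism $\Phi:S\to\PT_{n}$ with $\Phi(a^{\ast})=\Phi(a)^{\ast}$, and then disposes of $^{+}$ abstractly: $\Phi(a^{+})$ is a partial identity, $a\,\Rc\,a^{+}$ gives $\Phi(a)\,\Rc\,\Phi(a^{+})$, and $\Phi(a)^{+}$ is the unique partial identity $\Rc$-equivalent to $\Phi(a)$ in $\PT_{n}$. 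You instead reconstruct that cited representation explicitly (left translations on $D_{a}=\{x:a^{\ast}x=x\}$), verify injectivity, the homomorphism property and $\ast$-compatibility by hand from the $(2,1,1)$-identities, and then prove $\im\Phi(a)=D_{a^{+}}$ directly by exhibiting a preimage $x=a^{\ast}cy$ from a witness $c$ with $ac=a^{+}$. Your route is longer but self-contained and makes visible exactly which identities are used where (in particular that only the right restriction Ehresmann axioms are needed up to the $^{+}$ step); the paper's route is shorter and cleaner at the cost of outsourcing the hard part to Gould's theorem. Both correctly isolate regularity plus the EI hypothesis as the ingredients that make $^{+}$ behave.
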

\begin{proof}
The ``Cayley theorem'' for right restriction semigroups (see \cite[Theorem 6.2]{Gould2010})
says that there is a semigroup monomorphism $\Phi:S\to\PT_{n}$ such
that $\Phi(a^{\ast})=\Phi(a)^{\ast}$ for every $a\in S$. It remains
to show that $\Phi(a^{+})=\Phi(a)^{+}$ as well. First note that 
\[
\Phi(a^{+})=\Phi\left(\left(a^{+}\right)^{\ast}\right)=\Phi\left(a^{+}\right)^{\ast}
\]
so $\Phi(a^{+})$ is a partial identity of $\PT_{n}$. \lemref{right_restriction_EU_Ehresmann_characterization}
and the fact that $S$ is regular imply that $a\Rc a^{+}$ for every
$a\in S$. Therefore $\Phi(a)\Rc\Phi(a^{+})$ in $\PT_{n}$. Since
$\Phi(a)^{+}$ is the only partial identity which is $\Rc$ equivalent
to $\Phi(a)$ in $\PT_{n}$ we must conclude that $\Phi(a^{+})=\Phi(a)^{+}$
as required. 
\end{proof}
On the other hand, not every right restriction EI-Ehresmann semigroup
is embeddable in $\PT_{n}$ as a bi-unary semigroup. It is known from
\cite{shain1970,Stokes2009} that a right restriction Ehresmann semigroup
embeds in $\PT_{n}$ as a bi-unary semigroup if and only if it satisfies
the quasiidentity $xz=yz\implies xz^{+}=yz^{+}$. We now give an example
of an Ehresmann right restriction EI-semigroup that does not satisfy
this quasiidentity.
\begin{example}
\label{exa:Kinyon_Counterexample}\footnote{We thank Professor Michael Kinyon for this example, obtained using
Prover9\cite{Prover9}.} Recall that $\B_{n}$ is the monoid of all binary relations on an
$n$-element set. Denote by $\id_{A}$ the partial identity relation
on the set $A$. Choosing the partial identities as projections, it
is well known that $\B_{n}$ is Ehresmann with $\alpha^{+}=1_{\im(a)}$
and $a^{\ast}=1_{\dom(a)}$ for every $\alpha\in\B_{n}$. Let $S$
be the subsemigroup of $\B_{3}$ consisting of the following $5$
elements:
\begin{align*}
e & =\id_{\{1,2\}},\quad f=\id_{\{3\}},\quad a=\{(3,1),(3,2)\},\quad g=\{(1,2),(2,1)\}
\end{align*}
and the empty relation $0$. It easy to check that $S$ is a bi-unary
subsemigroup of $\B_{3}$ and hence $E$-Ehresmann for $E=\{e,f,0\}$.
It is a routine to verify that $S$ is also right restriction (in
fact, it is also left restriction) and it follows from \lemref{right_restriction_EU_Ehresmann_characterization}
that it is EI-Ehresmann since every idempotent is a projection. On
the other hand, we have $ea=a=ga$ but $ea^{+}=ee=e\neq g=ge=ga^{+}$
so the quasi-identity $xz=yz\implies xz^{+}=yz^{+}$ does not hold
in $S$.
\end{example}

\subsection{EI-restriction semigroups}

If a semigroup $S$ is both EI-Ehresmann and restriction (i.e., left
and right restriction), the situation reduces to a familiar one. A
restriction semigroup $S$ with the property that $E=E(S)$ is called
weakly ample (see \cite[Section 4]{Gould2010}). 
\begin{lem}
Let $S$ be a finite restriction semigroup. Then $S$ is EI-Ehresmann
if and only if $S$ is weakly ample. 
\end{lem}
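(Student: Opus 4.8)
The plan is to deduce both implications from the characterizations already established in \lemref{right_restriction_EU_Ehresmann_characterization} (for the right restriction case) together with the left--right dual of \lemref{Right_restriction_Ehresmann_leq}. Recall that here ``restriction'' means both left and right restriction, that ``weakly ample'' means precisely $E=E(S)$, and that the inclusion $E\subseteq E(S)$ is automatic; so the substantive part of the forward direction is the reverse inclusion $E(S)\subseteq E$, i.e.\ that every idempotent of $S$ is a projection. The first thing I would record is that in any restriction semigroup one has $f^{+}=f^{*}$ for every $f\in E(S)$: \lemref{Right_restriction_Ehresmann_leq} gives $f^{+}\leq f^{*}$ from the right restriction axiom, and applying that lemma to the opposite semigroup $S^{\op}$ --- which is right restriction Ehresmann with the roles of $^{+}$ and $^{*}$ swapped, since $S$ is left restriction --- gives $f^{*}\leq f^{+}$, hence equality in the semilattice $E$.

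For the forward implication, assume $S$ is a finite restriction EI-Ehresmann semigroup and fix $f\in E(S)$. Since $S$ is in particular right restriction, \lemref{right_restriction_EU_Ehresmann_characterization} yields $ff^{+}=f^{+}$; combining this with $f^{+}=f^{*}$ from the previous paragraph and the Ehresmann identity $ff^{*}=f$ gives $f=ff^{*}=ff^{+}=f^{+}$, so $f\in E$. Thus $E(S)\subseteq E$ and $S$ is weakly ample. For the converse, assume $S$ is weakly ample, so $E=E(S)$; then for every $f\in E(S)=E$ we have $f^{+}=f$, whence $ff^{+}=ff=f=f^{+}$, and \lemref{right_restriction_EU_Ehresmann_characterization} gives that $S$ is EI-Ehresmann.

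The argument is short and I do not anticipate a genuine obstacle; the only step that is not an immediate appeal to an earlier result is the left--right dual of \lemref{Right_restriction_Ehresmann_leq} invoked above, which can be supplied either by applying that lemma verbatim in $S^{\op}$ or by repeating its proof with the left ample identity and the two congruence identities interchanged. One could alternatively bypass \lemref{right_restriction_EU_Ehresmann_characterization} and argue directly from condition~(2) of \lemref{EU_Ehresmann_characterization}, using $f^{+}=f^{*}$ and $f^{*}f=f$ (the latter following from the right ample identity applied to the idempotent $f$) to collapse $f^{*}ff^{+}=f^{*}f^{+}$ to $f=f^{+}$.
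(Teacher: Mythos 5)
Your proof is correct and follows essentially the same route as the paper's: both directions hinge on the key fact that $f^{+}=f^{\ast}$ for every idempotent $f$, obtained from \lemref{Right_restriction_Ehresmann_leq} and its left--right dual. The only cosmetic difference is that you finish each direction by routing through the identity $ff^{+}=f^{+}$ of \lemref{right_restriction_EU_Ehresmann_characterization}, whereas the paper argues directly that $f$ is $\Ht$-equivalent to the projection $f^{+}=f^{\ast}$ and invokes uniqueness of the idempotent in a group $\Ht$-class.
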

\begin{proof}
Clearly $E=E(S)$ implies that $e$ is the unique idempotent of $\Ht(e)$
for every $e\in E$ hence $\Ht(e)$ is a group. In the other direction
\lemref{Right_restriction_Ehresmann_leq} and its dual implies that
for every idempotent $f\in E(S)$ we have $f^{+}\leq f^{\ast}$ and
$f^{\ast}\leq f^{+}$ hence $f^{+}=f^{\ast}$. This says that every
idempotent $f$ is $\Ht$-equivalent to a projection $e\in E$ and
being EI-Ehremsann implies $f=e$. 
\end{proof}

\subsection{The infinite case}

In this paper we focus on finite semigroups. However all the results
in \secref{Classes_of_EU_Ehresmann} can be adapted to infinite semigroups
as well. A monoid is called \emph{unipotent }if the identity is its
only idempotent and a category is called EU if all its endomorphisms
monoids are unipotent. We say that an Ehresmann semigroup $S$ is
EU-Ehresmann if its associated Ehresmann category is an EU-category
(i.e., $\Ht(e)$ is a unipotent monoid for every $e\in E$). Clearly
a finite monoid is unipotent if and only if it is a group and a finite
category is EU if and only if it is EI. The arguments in the previous
subsections prove the following statements (where now the semigroups
might be infinite):
\begin{prop}
The class of all EU-Ehresmann semigroups is a quasivariety of bi-unary
semigroups defined by the identities of Ehresmann semigroups and the
quasiidentity 
\[
\text{\mbox{\ensuremath{f^{2}=f\to f^{\ast}ff^{+}=f^{\ast}f^{+}}}}.
\]
.
\end{prop}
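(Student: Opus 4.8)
The plan is to adapt, essentially verbatim, the three arguments already given in this section so that finiteness is never used. Recall that the claimed description of EU-Ehresmann semigroups is: a semigroup $S$ equipped with the unary operations $^{+},^{\ast}$ is EU-Ehresmann if and only if it satisfies the identities of Ehresmann semigroups together with the quasiidentity $f^{2}=f\to f^{\ast}ff^{+}=f^{\ast}f^{+}$. So the real content is the equivalence $(1)\iff(2)$ of \lemref{EU_Ehresmann_characterization}, restated with ``EI'' replaced by ``EU'' — i.e.\ $\Ht(e)$ is a unipotent monoid for every $e\in E$ if and only if every idempotent $f\in E(S)$ satisfies $f^{\ast}ff^{+}=f^{\ast}f^{+}$ — and then one observes that a quasivariety of $(2,1,1)$-algebras is exactly a class cut out by identities and quasiidentities, which is the case here since Ehresmann semigroups already form a variety (by the list of identities in \cite{Gould2010b}, used freely throughout).

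The first step is to check the direction $(2)\implies$ EU-Ehresmann. This is literally the proof of $(2\implies1)$ in \lemref{EU_Ehresmann_characterization}: take $e\in E$ and an idempotent $f\in E(S)$ with $f\Ht e$, so $f^{+}=f^{\ast}=e$; the hypothesis gives $efe=e$, and since always $efe=f^{+}ff^{\ast}=f$ one concludes $f=e$. Hence the only idempotent in $\Ht(e)$ is $e$ itself, which says precisely that $\Ht(e)$, as a monoid with identity $e$, is unipotent. Note this argument never invoked finiteness, nor did it need $\Ht(e)$ to be a group — unipotence of the monoid is exactly what the argument actually produces, which is why the infinite statement is phrased with EU rather than EI.

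The second step is the direction EU-Ehresmann $\implies(2)$. Again this is the proof of $(1\implies2)$ in \lemref{EU_Ehresmann_characterization}, and I would reproduce it unchanged: for an arbitrary idempotent $f\in E(S)$ one computes, using only the Ehresmann identities, that $f^{\ast}ff^{+}$ is an idempotent, that $(f^{\ast}ff^{+})^{+}=f^{\ast}f^{+}$ via the left congruence identity $(ab)^{+}=(ab^{+})^{+}$, and that $(f^{\ast}ff^{+})^{\ast}=f^{\ast}f^{+}$ via the right congruence identity $(ab)^{\ast}=(a^{\ast}b)^{\ast}$; hence $f^{\ast}ff^{+}\,\Ht\,f^{\ast}f^{+}$. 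Now $f^{\ast}f^{+}$ is itself an idempotent (as $f^{+}\le f^{\ast}$ need not hold here, but $f^{\ast}f^{+}\in E$ since $E$ is a subsemilattice, hence is the identity of its own $\Ht$-class), so both $f^{\ast}ff^{+}$ and $f^{\ast}f^{+}=(f^{\ast}f^{+})$ lie in the unipotent monoid $\Ht(f^{\ast}f^{+})$ and are idempotents there; unipotence forces them to coincide, giving $f^{\ast}ff^{+}=f^{\ast}f^{+}$. The one point to be careful about — and the only place the finite proof's phrasing (``$\Ht(f^{\ast}f^{+})$ has a unique idempotent'') has to be reread rather than copied — is to note that ``unipotent monoid'' is by definition the statement that its identity is its unique idempotent, so this substitution is purely cosmetic.

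The third and final step is the universal-algebra packaging: having shown that, within the variety of Ehresmann bi-unary semigroups, ``EU-Ehresmann'' coincides with satisfaction of the quasiidentity $f^{2}=f\to f^{\ast}ff^{+}=f^{\ast}f^{+}$ (reading $f^{2}=f$ as the identity singling out idempotents), the class of all EU-Ehresmann semigroups is defined by a set of identities plus one quasiidentity, hence is a quasivariety by the syntactic definition of quasivariety (closure under products, substructures, and ultraproducts — all of which is automatic from the definition via Horn sentences; see \cite{universalalgebra}). I do not anticipate a genuine obstacle: the whole proposition is a transcription exercise, and the only mild subtlety — making sure that ``unipotent'' is the right weakening of ``group'' so that the finite-case equivalences survive without the hypothesis that $\Ht$-classes are groups — has already been flagged in the paragraph introducing this subsection, where it is observed that a finite monoid is unipotent iff it is a group. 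So the step most deserving of care is simply the bookkeeping in the second direction, ensuring each equality is justified by an Ehresmann identity that holds in arbitrary (not just finite) Ehresmann semigroups; since \cite{Gould2010b} establishes those as a variety, this is fine.
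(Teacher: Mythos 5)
Your proposal is correct and is essentially the paper's own argument: the paper gives no separate proof, stating only that ``the arguments in the previous subsections prove the following statements,'' which is precisely the transcription you carry out. Your one substantive check --- that the step ``$\Ht(f^{\ast}f^{+})$ has a unique idempotent'' survives when ``group'' is weakened to ``unipotent monoid,'' and that no other step of \lemref{EU_Ehresmann_characterization} $(1\iff 2)$ uses finiteness --- is exactly the point the paper is implicitly relying on.
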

\begin{prop}
The class of all right restriction EU-Ehresmann semigroups is a quasivariety
of bi-unary semigroups defined by the identities of Ehresmann semigroups,
the right ample identity $b^{\ast}a=a(b^{\ast}a)^{\ast}$ and the
quasiidentity $f^{2}=f\to ff^{+}=f^{+}$.
\end{prop}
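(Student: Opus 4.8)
The plan is to establish this exactly as we did for the previous proposition, adapting the finite-case arguments of \subsecref{Right_restriction_EU_Ehresmann} to the possibly-infinite setting. So the goal is twofold: first, to show that the class defined by the listed axioms really does coincide with the class of right restriction EU-Ehresmann semigroups; second, to verify that all the axioms involved are legitimate quasiidentities in the signature $(2,1,1)$, so that the class is genuinely a quasivariety by Mal'cev's theorem.

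First I would recall that the identities of Ehresmann semigroups together with the right ample identity $b^{\ast}a=a(b^{\ast}a)^{\ast}$ cut out exactly the right restriction Ehresmann semigroups — this is just the definition from \subsecref{EhresmannSgp}, lifted verbatim to the infinite case, and nothing about finiteness was used there. So it remains to see that, \emph{within} the class of right restriction Ehresmann semigroups, the quasiidentity $f^{2}=f\to ff^{+}=f^{+}$ is equivalent to being EU-Ehresmann. The key observation is that \lemref{Right_restriction_Ehresmann_leq} and its proof are purely equational — they invoke only the Ehresmann identities and the right ample identity — and hence hold for infinite right restriction Ehresmann semigroups without change. Granting that, the argument of \lemref{right_restriction_EU_Ehresmann_characterization}, in the form $(1)\iff(2)$, goes through: if $S$ is EU-Ehresmann and $f\in E(S)$, then $\Ht(f^{+})$ is unipotent, and since the proof of $(1)\implies(2)$ there produces $f^{\ast}ff^{+}=f^{\ast}f^{+}$ and then $ff^{+}=f^{+}$ using only that $\Ht$ of a projection contains a unique idempotent (which a unipotent monoid does), we get the forward implication; conversely, $ff^{+}=f^{+}$ for all idempotents forces every idempotent of $\Ht(e)$ to equal $e$ (run the $(2)\implies(1)$ argument), so each $\Ht(e)$ is unipotent. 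One has to be a little careful here, because \lemref{EU_Ehresmann_characterization} as stated is for finite semigroups and phrased in terms of sandwich sets and regularity; the cleanest route is to bypass it and argue directly that $ff^{+}=f^{+}$ for all $f\in E(S)$ is equivalent to ``for every projection $e$, the only idempotent $\Ht$-equivalent to $e$ is $e$ itself,'' which is exactly the definition of EU-Ehresmann specialized through the observation (noted just before \subsecref{EhresmannSgp}... rather, in \secref{Classes_of_EU_Ehresmann}) that $a\in\Ht(e)$ iff $C(a)$ is an endomorphism at $e$.

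The main obstacle I expect is precisely this reliance on finiteness in the ambient lemmas: \lemref{right_restriction_EU_Ehresmann_characterization} invokes regularity of idempotents and the structure of sandwich sets, and \lemref{EU_Ehresmann_characterization} is explicitly stated for finite $S$. So I would not cite them as black boxes but instead extract the equational core. Concretely, the one nontrivial direction is: assuming $ff^{+}=f^{+}$ for every $f\in E(S)$, show $\Ht(e)$ is unipotent for every projection $e$. Take an idempotent $f$ with $f\,\Ht\,e$, so $f^{+}=f^{\ast}=e$; then $ff^{+}=f^{+}$ reads $fe=e$, and since $f\leq_l e$ as well (because $f=fe=f f^\ast$ automatically, or directly $fe = f e$ — one checks $ef = f$ from $f^+ \le f^*$ applied with the dual, giving $f = f^+ f = e f$), combining $ef=f$ and $fe=e$ with idempotency of $f$ yields $f=fe=(ef)e=e(fe)=ee=e$. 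Hence $\Ht(e)=\{e\}\cup(\text{non-idempotents})$ has a unique idempotent, so it is unipotent. For the converse, unipotency of $\Ht(f^{+})$ means it contains only the idempotent $f^{+}$; the chain of equational manipulations in the proof of \lemref{right_restriction_EU_Ehresmann_characterization}$(1)\implies(2)$ shows $f^{\ast}ff^{+}$ is an idempotent $\Ht$-equivalent to $f^{\ast}f^{+}=f^{+}$ (using \lemref{Right_restriction_Ehresmann_leq}), hence equals $f^{+}$, and a further line gives $ff^{+}=f^{+}$.

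Finally, the quasivariety claim: each Ehresmann identity and the right ample identity are identities in $(2,1,1)$, and $f^{2}=f\to ff^{+}=f^{+}$ is a quasiidentity (a universally quantified implication between equations) in that signature; by Mal'cev's theorem a class of algebras axiomatized by quasiidentities is exactly a quasivariety, so the conclusion follows. I would close by remarking, as the paper does for the EU-Ehresmann proposition, that in the finite case $x^{\omega}$ realizes an arbitrary idempotent in the one-generated subsemigroup, which is why the finite version can be stated with the profinite \emph{identity} $x^{\omega}(x^{\omega})^{+}=(x^{\omega})^{+}$ in place of the quasiidentity, recovering the corollary preceding \exaref{Kinyon_Counterexample}.
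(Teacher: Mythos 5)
Your proposal is correct and matches the paper's intent exactly: the paper gives no separate proof for this proposition, stating only that ``the arguments in the previous subsections prove the following statements,'' and your write-up is precisely that adaptation — isolating the purely equational content of \lemref{Right_restriction_Ehresmann_leq} and of the $(1)\iff(2)$ equivalences in \lemref{EU_Ehresmann_characterization} and \lemref{right_restriction_EU_Ehresmann_characterization}, while correctly avoiding the parts (sandwich sets, regularity, $x^{\omega}$) that rely on finiteness. The only cosmetic remark is that in the converse direction the identity $fe=f$ follows immediately from $f^{\ast}=e$, so your detour through the dual of \lemref{Right_restriction_Ehresmann_leq} is unnecessary, but the argument is sound as written.
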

We denote by $\PT_{X}$ the monoid of all partial functions on the
(possible infinite) set $X$.
\begin{prop}
Let $S$ be a regular right restriction EU-Ehresmann semigroup. There
is an embedding of bi-unary semigroups $\Phi:S\to\PT_{S}$ .
\end{prop}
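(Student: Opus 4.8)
The plan is to mimic the proof of the finite case (the Proposition stating that a regular finite right restriction EI-Ehresmann semigroup embeds in $\PT_n$ for $n=|S|$), being careful about which ingredients survive in the infinite setting. The key observation is that the only place finiteness was used there was through \lemref{right_restriction_EU_Ehresmann_characterization}, and the relevant equivalence ``$S$ EI-Ehresmann $\iff$ every idempotent $f$ satisfies $ff^{+}=f^{+}$'' must be replaced by the quasiidentity form valid for EU-Ehresmann semigroups. So first I would record that in a regular right restriction EU-Ehresmann semigroup every idempotent $f\in E(S)$ satisfies $ff^{+}=f^{+}$. For this, reread the proof of $(1\implies 2)$ in \lemref{right_restriction_EU_Ehresmann_characterization}: it uses $f^{\ast}ff^{+}=f^{\ast}f^{+}$ (which, via the quasiidentity defining EU-Ehresmann applied to the idempotent $f$, holds here too) together with \lemref{Right_restriction_Ehresmann_leq} (which is stated for arbitrary right restriction Ehresmann semigroups, no finiteness), giving $f^{\ast}f=f^{\ast}f^{+}f=f^{+}f=f$ and hence $ff^{+}=f^{+}$. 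Note that \lemref{EU_Ehresmann_characterization} itself is not needed in infinite form — only the chain of identity manipulations, which is what the EU quasiidentity supplies directly for idempotents.

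Next I would invoke the Cayley-type embedding for right restriction semigroups, \cite[Theorem 6.2]{Gould2010}, which (as used in the finite Proposition) provides a semigroup monomorphism $\Phi:S\to\PT_{S}$ with $\Phi(a^{\ast})=\Phi(a)^{\ast}$ for all $a\in S$; this statement does not require $S$ finite, only that the underlying set be $S$ itself, so the target is $\PT_{S}$. The remaining task, exactly as before, is to upgrade $\Phi$ to a bi-unary embedding by showing $\Phi(a^{+})=\Phi(a)^{+}$. Since $a^{+}=(a^{+})^{\ast}$, we get $\Phi(a^{+})=\Phi(a)^{\ast}$ composed appropriately, so $\Phi(a^{+})$ is a partial identity of $\PT_{S}$. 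Now regularity plus the fact just proved give $a\,\Rc\,a^{+}$ in $S$: if $a$ is regular then $a\,\Rc\,f$ for some idempotent $f$ with $f^{+}=a^{+}$, and $ff^{+}=f^{+}$ forces $f\,\Rc\,f^{+}$, hence $a\,\Rc\,a^{+}$. A monomorphism of semigroups preserves $\Rc$-related pairs, so $\Phi(a)\,\Rc\,\Phi(a^{+})$ in $\PT_{S}$; since in $\PT_{S}$ the unique partial identity $\Rc$-equivalent to a given partial function $\Phi(a)$ is $\Phi(a)^{+}=1_{\im(\Phi(a))}$, we conclude $\Phi(a^{+})=\Phi(a)^{+}$.

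One point to check is that $\Rc$-equivalence is preserved by an injective semigroup homomorphism into $\PT_S$: if $aS^1=bS^1$ then $\Phi(a)\Phi(S)^1=\Phi(b)\Phi(S)^1$, but we need this with $\PT_S^1$ in place of $\Phi(S)^1$; this is fine because $a\,\Rc\,a^{+}$ with $a^{+}$ idempotent means $a$ and $a^{+}$ generate the same right ideal already via elements of $S$ (indeed $a=a a^{+}$ wait — rather $a a^{\ast}=a$ and one shows $a^{+}a=a$, giving $a^{+}\in aS^1$ is the subtle direction), and $a\,\Rc\,a^{+}$ was obtained through $a\,\Rc\,f$ for a genuine idempotent $f$, so the ``$\Rc$'' witnesses live in $S$ and map under $\Phi$ into $\Phi(S)\subseteq\PT_S$. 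Thus $\Phi(a)$ and $\Phi(a^{+})$ are $\Rc$-related in $\PT_S$ too, since being $\Rc$-related to an idempotent via an element of the subsemigroup is an absolute (non-relative) statement. I expect the main obstacle to be purely bookkeeping: confirming that \cite[Theorem 6.2]{Gould2010} is genuinely stated (or trivially adapts) for arbitrary, possibly infinite, $S$ with target $\PT_S$, and tracking the $\Rc$-relation carefully across the (merely injective, not necessarily saturated) homomorphism $\Phi$ — but as noted, the $\Rc$-relation to an idempotent is preserved absolutely, so no genuine difficulty arises. Everything else is a verbatim repeat of the finite argument with ``\lemref{right_restriction_EU_Ehresmann_characterization}'' replaced by the explicit idempotent computation licensed by the EU quasiidentity together with \lemref{Right_restriction_Ehresmann_leq}.
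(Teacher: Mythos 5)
Your proposal is correct and follows essentially the same route as the paper, which simply asserts that the finite argument (Cayley embedding for right restriction semigroups, $\Phi(a^{+})$ is a partial identity, regularity plus $ff^{+}=f^{+}$ gives $a\,\Rc\,a^{+}$, and uniqueness of the partial identity in an $\Rc$-class of $\PT_{S}$) carries over verbatim once the finite characterization lemma is replaced by the EU quasiidentity. Your extra care about $\Rc$ being preserved is fine but unnecessary: any semigroup homomorphism sends $\Rc$-related pairs to $\Rc$-related pairs, since the witnesses in $S^{1}$ map forward.
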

\begin{lem}
Let $S$ be a restriction semigroup. Then $S$ is EU-Ehresmann if
and only if $S$ is weakly ample. 
\end{lem}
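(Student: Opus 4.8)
The plan is to run, essentially verbatim, the argument used for the finite version of this statement, after observing that finiteness played no role there: the only feature of the endomorphism monoids $\Ht(e)$ that was actually used is that each of them contains a single idempotent, and ``unipotent'' is precisely the hypothesis that supplies this (the unique idempotent being the identity $e$ of $\Ht(e)$), just as ``group'' did in the finite EI-version. Recall also that since $S$ is already assumed restriction, ``$S$ is weakly ample'' just means ``$E=E(S)$''.

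For the direction ``weakly ample $\Rightarrow$ EU-Ehresmann'' I would fix a projection $e\in E$ and look at $\Ht(e)$, which is the endomorphism monoid ${\bf C}(e,e)$ with identity $e$ (one has $ea=a=ae$ for all $a\in\Ht(e)$ from the Ehresmann identities $a=a^{+}a=aa^{\ast}$). Every idempotent of $\Ht(e)$ lies in $E(S)=E$; but $S$ is Ehresmann, so each $\Lt$-class contains a unique idempotent of $E$, forcing $\Ht(e)\cap E=\{e\}$. Hence $e$ is the only idempotent of $\Ht(e)$, i.e.\ $\Ht(e)$ is unipotent; as $e$ was arbitrary, $S$ is EU-Ehresmann.

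For the converse, assuming $S$ is EU-Ehresmann, I would take an arbitrary $f\in E(S)$ and apply \lemref{Right_restriction_Ehresmann_leq} together with its left--right dual: right restriction gives $f^{+}\leq f^{\ast}$ and left restriction gives $f^{\ast}\leq f^{+}$, so $f^{\ast}=f^{+}=:e\in E$. Then $f\Lt f^{\ast}=e$ and $f\Rt f^{+}=e$, so $f$ is an idempotent of the unipotent monoid $\Ht(e)$, whence $f=e\in E$. This shows $E(S)\subseteq E$, and since $E\subseteq E(S)$ always, $E=E(S)$ and $S$ is weakly ample. I do not expect any genuine obstacle here; the only point worth stating explicitly is why the weaker notion ``unipotent'' (rather than ``group'') still makes both implications go through, namely that a monoid is unipotent exactly when it has a unique idempotent, which is all the proof ever uses.
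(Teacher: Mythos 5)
Your proof is correct and follows essentially the same route as the paper, which simply observes that the finite argument (unique idempotent of $E$ in each $\Lt$-class for one direction; \lemref{Right_restriction_Ehresmann_leq} and its dual for the other) carries over verbatim once ``group'' is weakened to ``unipotent.'' Your explicit remark that only the uniqueness of the idempotent in $\Ht(e)$ is ever used is exactly the point the paper relies on.
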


\section{\label{sec:Simple_modules}Simple modules of finite right restriction
EI-Ehresmann semigroups}

From now on the focus will be on the case of finite right restriction
EI-Ehresmann semigroups. In this section we would like to describe
the simple modules of algebras of such semigroups and in certain cases
also the indecomposable injective ones. We remark that certain observations
on the semisimple image of such semigroup algebras and their ordinary
quiver can be found in \cite[Proposition 5.17]{Stein2017} and \cite[Section 6.3]{Margolis2012}. 
\begin{lem}
\label{lem:Only_Idempotent_In_L_class}Let $S$ be a finite right
restriction EI-Ehresmann semigroup and let $e\in E$. Then $e$ is
the only idempotent in $\Lc(e)$. 
\end{lem}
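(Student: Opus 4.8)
The plan is to show that any idempotent $f \in \Lc(e)$ must actually equal $e$, using the fact that $\Lc \subseteq \Lt$ together with \lemref{right_restriction_EU_Ehresmann_characterization}. First I would observe that if $f$ is an idempotent in $\Lc(e)$, then in particular $f \Lt e$, so $f$ and $e$ lie in the same $\Lt$-class. Since $e \in E$ is the unique projection in its own $\Lt$-class, and $e$ is itself an idempotent, I need to identify what the projection $f^{\ast}$ is: because $f \Lt e$ and $e \in E$, uniqueness of the projection in an $\Lt$-class forces $f^{\ast} = e$. So it suffices to prove that $f = f^{\ast}$ for this idempotent, i.e.\ that $f$ is itself a projection.

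Next I would bring in \lemref{right_restriction_EU_Ehresmann_characterization}, which tells us (since $S$ is right restriction EI-Ehresmann) that every idempotent $f \in E(S)$ satisfies $ff^{+} = f^{+}$, and also \lemref{Right_restriction_Ehresmann_leq}, which gives $f^{+} \leq f^{\ast}$, equivalently $f^{\ast}f^{+} = f^{+} = f^{+}f^{\ast}$. Combining these, as in the proof of $(1 \implies 2)$ of \lemref{right_restriction_EU_Ehresmann_characterization}, we get
\[
f = f f^{\ast} = f f^{+} f^{\ast} = f^{+} f^{\ast} = f^{+},
\]
wait — I should be careful about which identity $f f^{\ast} = f$ uses; it is just the defining property $f = f f^{\ast}$ of the projection $f^{\ast}$ associated to $f$ (valid in any right Ehresmann semigroup). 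Actually the cleanest route: from $ff^{+}=f^{+}$ and $f^{+} = f^{\ast}f^{+}$ we get $f^{\ast}f = f^{\ast}f^{+}f = f^{+}f = f$ (using $f^{+}f = f$, the dual defining property, valid since $S$ is left Ehresmann too), so $f = f^{\ast}f$. But $f^{\ast} = e$ from the previous paragraph, so $f = ef$. Hmm, I still need $f = e$, not $f = ef$.

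Let me reconsider: the truly clean argument is that $f^{\ast} = e$ already forces $f = f f^{\ast} = f e$; separately, since $f$ is an idempotent in $\Lc(e)$ we also have $e \Lc f$, hence $e = af$ for some $a \in S^1$, and then $ef = e(af \cdot f) $... this is getting circular. I think the intended short proof is: by \lemref{right_restriction_EU_Ehresmann_characterization}(2), $f f^{+} = f^{+}$; by \lemref{Right_restriction_Ehresmann_leq}, $f^{+} \le f^{\ast}$ so $f^{+} f^{\ast} = f^{+}$; and the defining Ehresmann identities give $f = f^{+} f = f f^{\ast}$. Hence $f = f^{+} f = (f f^{+}) f = f (f^{+} f) = \dots$ — rather, $f = f f^{\ast}$ and $f^{\ast} f = f^{\ast} f^{+} f = f^{+} f = f$, so $f$ is $\Lt$-equivalent in the strong sense to the idempotent $f^{\ast}$, and moreover $f^{\ast} \le_l f$ while a symmetric computation using $f f^{+} = f^{+} \le f^{\ast}$ shows $f \le_l f^{\ast}$; since $\le_l$ is a partial order, $f = f^{\ast} \in E$, and then the uniqueness of the projection in $\Lt(e)$ gives $f = e$. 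I expect the main obstacle to be exactly this last bookkeeping step — getting $f \le_l f^{\ast}$, i.e.\ $f = f f^{\ast}$ refined to $f \in E$ — but it follows directly from $f = f^{+}$ once we show $f^{+} = f^{\ast}$, and $f^{+} = f^{\ast}$ follows from $f^{+} \le f^{\ast}$ combined with $f^{\ast} \le f^{+}$, the latter coming from applying the $ff^+ = f^+$-type reasoning after noting $f^{\ast}$ is itself an idempotent with $(f^{\ast})^{+} = f^{\ast}$ and using $f \Lc e = f^{\ast}$ to transfer. So the skeleton is: (i) $f^{\ast} = e$ by $\Lt$-uniqueness; (ii) $f \in E(S)$ together with the right-restriction EI hypotheses force $f^{+} = f^{\ast}$ and then $f = f^{+} = f^{\ast} = e$.
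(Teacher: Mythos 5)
Your proposal has the right ingredients but does not close, and the place where it fails is exactly where the hypothesis $f\Lc e$ (as opposed to the much weaker $f\Lt e$) has to enter. The sound part is this: $f\Lc e$ gives $f\Lt e$, hence $f^{\ast}=e$ by uniqueness of the projection in an $\Lt$-class, and your computation $f^{\ast}f=f^{\ast}f^{+}f=f^{+}f=f$ (using $f^{+}\leq f^{\ast}$ from \lemref{Right_restriction_Ehresmann_leq}) correctly yields $f=ef$. At that point you stop, saying you ``still need $f=e$, not $f=ef$'' --- but $f\Lc e$ with both elements idempotent gives $e=af$ for some $a\in S^{1}$, whence $ef=af^{2}=af=e$, so $f=ef=e$ and you would be done. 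You even write down $e=af$ in the next paragraph, but mis-substitute and declare the route circular; it is not. Everything you try afterwards is either wrong or unjustified: the chain $ff^{\ast}=ff^{+}f^{\ast}$ presupposes $ff^{+}=f$, whereas \lemref{right_restriction_EU_Ehresmann_characterization} gives $ff^{+}=f^{+}$ (the two together are equivalent to the desired conclusion $f=f^{+}$, so this is circular); the claims $f^{\ast}\leq_{l}f$ and $f\leq_{l}f^{\ast}$ are each equivalent to $f=f^{\ast}$, i.e.\ to what you are trying to prove; and the assertion that $f^{\ast}\leq f^{+}$ follows ``by transfer'' is never argued. That inequality is the crux of your final skeleton, and it cannot follow from $f$ being an idempotent with $f\Lt e$ alone: in $\PT_{n}$ the class $\Lt(\id)$ contains many idempotents $f$ with $f^{+}$ strictly below $f^{\ast}$.

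For comparison, the paper's proof works with $f^{+}$ rather than $f^{\ast}$: from $f\Lc e$ it extracts both $ef=e$ and $fe=f$, then computes $e=ef=ef^{+}f=f^{+}ef=f^{+}e$ and $f^{+}=ff^{+}=fef^{+}=ff^{+}e=f^{+}e$ (using $ff^{+}=f^{+}$ and commutativity of $E$) to conclude $e=f^{+}$, hence $f\Rc f^{+}=e$, so $f\Hc e$ and $f=e$. Your $f^{\ast}=e$ route can be repaired and is then arguably shorter ($ef=f$ from the displayed computation, $ef=e$ from the $\Lc$-relation, hence $f=e$), but as submitted the argument has a genuine hole.
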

\begin{proof}
Assume $f\in\Lc(e)$ is an idempotent so $ef=e$ and $fe=f$. \lemref{right_restriction_EU_Ehresmann_characterization}
implies that $ff^{+}=f^{+}$ and clearly $f^{+}f=f$. Now,

\begin{align*}
e & =ef=ef^{+}f=f^{+}ef=f^{+}e\\
f^{+} & =ff^{+}=fef^{+}=ff^{+}e=f^{+}e
\end{align*}
so $e=f^{+}$. Therefore $f\Lc e$ and $f\Rc f^{+}=e$ so $f\Hc e$
which implies $f=e$. 
\end{proof}
\begin{prop}
Let $S$ be a finite right restriction EI-semigroup. Let $J$ be a
regular $\Jc$-class of $S$. Then the sandwich matrix of $J$ is
left invertible over $\Bbbk G_{J}$ for every field $\Bbbk$. 
\end{prop}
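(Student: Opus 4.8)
The plan is to pass to a Rees coordinatization of the principal factor of $J$ and write down an explicit left inverse of the sandwich matrix. Write the principal factor of $J$ as $M^{0}(G_{J},A,B,P)$, where $A$ indexes the $\Rc$-classes $R_{\alpha}$ of $J$, $B$ indexes the $\Lc$-classes $L_{\beta}$, and $P=(p_{\beta\alpha})_{\beta\in B,\,\alpha\in A}$ has entries in $G_{J}\cup\{0\}$; recall that $p_{\beta\alpha}\neq0$ if and only if the $\Hc$-class $R_{\alpha}\cap L_{\beta}$ contains an idempotent, and that in that case $p_{\beta\alpha}\in G_{J}$. First I would note that each $\Rc$-class of $J$ meets $E$ in exactly one point. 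Indeed, since $J$ is regular every $\Rc$-class contains an idempotent $f$, and $f\Rc f^{+}$ by \lemref{right_restriction_EU_Ehresmann_characterization}, so the projection $f^{+}\in E$ lies in that $\Rc$-class; and if $e_{1},e_{2}\in E$ are $\Rc$-related then (from $e_{2}\in e_{1}S^{1}$ and $e_{1}\in e_{2}S^{1}$) $e_{1}e_{2}=e_{2}$ and $e_{2}e_{1}=e_{1}$, which together with the commutativity of $E$ forces $e_{1}=e_{2}$. For $\alpha\in A$, let $e_{\alpha}\in E$ be the unique projection in $R_{\alpha}$ and let $\beta(\alpha)\in B$ index its $\Lc$-class, so $L_{\beta(\alpha)}=\Lc(e_{\alpha})$.

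The crux is to show that the rows of $P$ indexed by $\beta(A)$ form a monomial block. The assignment $\alpha\mapsto\beta(\alpha)$ is injective: if $\beta(\alpha)=\beta(\alpha')$ then $e_{\alpha'}$ is an idempotent of $\Lc(e_{\alpha})$, hence $e_{\alpha'}=e_{\alpha}$ by \lemref{Only_Idempotent_In_L_class}, and so $\alpha=\alpha'$. Next, fix $\alpha\in A$ and suppose $p_{\beta(\alpha),\gamma}\neq0$ for some $\gamma\in A$; then $R_{\gamma}\cap L_{\beta(\alpha)}$ contains an idempotent $f$, and $f\in L_{\beta(\alpha)}=\Lc(e_{\alpha})$, so \lemref{Only_Idempotent_In_L_class} gives $f=e_{\alpha}$ and therefore $\gamma=\alpha$. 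Hence the row of $P$ indexed by $\beta(\alpha)$ vanishes off column $\alpha$, while its entry $g_{\alpha}:=p_{\beta(\alpha),\alpha}$ is nonzero, because $R_{\alpha}\cap L_{\beta(\alpha)}=\Hc(e_{\alpha})$ is a group, and hence $g_{\alpha}\in G_{J}$.

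It remains to build the left inverse. Define $Q\in M_{A\times B}(\Bbbk G_{J})$ by $Q_{\alpha\beta}=g_{\alpha}^{-1}$ if $\beta=\beta(\alpha)$ and $Q_{\alpha\beta}=0$ otherwise; this makes sense since $g_{\alpha}^{-1}\in G_{J}\subseteq\Bbbk G_{J}$. Then for $\alpha,\gamma\in A$ we compute $(QP)_{\alpha\gamma}=\sum_{\beta\in B}Q_{\alpha\beta}p_{\beta\gamma}=g_{\alpha}^{-1}p_{\beta(\alpha),\gamma}$, which is $1_{G_{J}}$ if $\gamma=\alpha$ and $0$ otherwise by the preceding paragraph. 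Thus $QP=I_{A}$, so $P$ is left invertible over $\Bbbk G_{J}$, and this holds over any field since no hypothesis on $\Bbbk$ was used. (Note that $\beta$ injective gives $|A|\leq|B|$, so one should not expect right invertibility.) The conclusion is independent of the Rees coordinates chosen, as a change of coordinates replaces $P$ by $UPV$ with $U$ and $V$ invertible diagonal matrices over $G_{J}$.

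I expect the only subtle point to be the monomiality of the rows indexed by $\beta(A)$: a priori these $\Lc$-classes could coincide, or could contain idempotents lying in $\Rc$-classes other than the expected one, and it is precisely \lemref{Only_Idempotent_In_L_class} (each projection is the unique idempotent of its $\Lc$-class) that excludes both. Everything afterward is elementary linear algebra over $\Bbbk G_{J}$, with \lemref{right_restriction_EU_Ehresmann_characterization} entering only through $a\Rc a^{+}$ to ensure that the projections $e_{\alpha}$ account for all $\Rc$-classes of $J$.
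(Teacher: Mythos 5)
Your proof is correct and follows essentially the same route as the paper's: use \lemref{right_restriction_EU_Ehresmann_characterization} to place exactly one projection in each $\Rc$-class of $J$, then use \lemref{Only_Idempotent_In_L_class} to see that the rows of the sandwich matrix indexed by the $\Lc$-classes of those projections form a monomial (diagonal, after reordering) block with entries in $G_{J}$. The only cosmetic difference is that you write down the left inverse $Q$ explicitly, whereas the paper stops at observing the invertible $k\times k$ block.
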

\begin{proof}
Let $E_{J}=E\cap J=\{e_{1},\ldots,e_{k}\}$ be the set of projections
which belongs to $J$. An $\Rc$-class cannot have two projections
and a regular $\Rc$-class has at least one by \lemref{right_restriction_EU_Ehresmann_characterization}
so $J$ has precisely $k$ $\Rc$-classes. Moreover, $e_{1},\ldots,e_{k}$
are also in different $\Lc$-classes. Fix $e=e_{1}$ and choose representatives
$\rho_{1},\ldots,\rho_{k}$ for the $\Hc$-classes of $\Lc(e)$ and
$\lambda_{1},\ldots,\lambda_{l}$ for the $\Hc$-classes of $\Rc(e)$
(note that $l\geq k$). Since projections of $E_{J}$ are in different
$\Lc$ and $\Rc$-classes, we can order the representatives such that
$e_{i}\in\Rc(\rho_{i})\cap\Lc(\lambda_{i}).$ \lemref{Only_Idempotent_In_L_class}
implies that there is no idempotent in $\Rc(\rho_{i})\cap\Lc(\lambda_{j})$
for $i\neq j$ and $1\leq i,j\leq k$ since $e_{j}$ is the unique
idempotent in $\Lc(\lambda_{j})$ (note that there can be idempotents
in $\Rc(\rho_{i})\cap\Lc(\lambda_{j})$ for $j>k$). Therefore, the
sandwich matrix is a $l\times k$ matrix whose upper $k\times k$
block is a diagonal matrix and all the entries on the main diagonal
are non-zero. Therefore, the sandwich matrix is left invertible over
$\Bbbk G_{J}$. 
\end{proof}
If the sandwich matrix of a regular $\Jc$-class $J$ is left invertible
then the module induced from a left Sch\"{u}tzenberger module of $J$
and $V\in\Irr\Bbbk G_{J}$ is simple, see \cite[Corollary 4.22]{Steinberg2016}
and \cite[Lemma 5.20]{Steinberg2016}. This fact and the Clifford-Munn-Ponizovskii
theorem yields the following corollary. 
\begin{thm}
Let $S$ be a finite right restriction EI-Ehresmann semigroup and
let $\Bbbk$ be a field. Let $J$ be a regular $\Jc$-class and let
$V\in\Irr\Bbbk G_{J}$. Then 
\[
\Ind_{G_{J}}(V)=\Bbbk\Lc(e)\bigotimes_{\Bbbk G_{J}}V
\]
\label{thm:Simple_modules_Thm}(for some idempotent $e\in J$) is
a simple module of $\Bbbk S$. In fact, 
\[
\{\Ind_{G_{J}}(V)\mid\text{\text{\ensuremath{J} is a regular \ensuremath{\Jc}-class,\quad\ensuremath{V\in\Irr\Bbbk G_{J}\}}}}
\]
is a list of all the simple modules of $\Bbbk S$ up to isomorphism
with no isomorphic copies. 
\end{thm}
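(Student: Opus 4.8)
The plan is to combine the previous Proposition (that the sandwich matrix of every regular $\Jc$-class $J$ of $S$ is left invertible over $\Bbbk G_J$) with the classical Clifford--Munn--Ponizovskii machinery as packaged in \cite{Steinberg2016}. First I would recall the setup from the Preliminaries: for a fixed idempotent $e\in J$, the space $\Bbbk\Lc(e)$ is a $\Bbbk S$--$\Bbbk G_J$ bimodule, so $\Ind_{G_J}(V)=\Bbbk\Lc(e)\otimes_{\Bbbk G_J}V$ is a left $\Bbbk S$-module, and we already noted this module is independent of the choice of $e\in J$. The point of the theorem is to upgrade the general statement ``the \emph{top} (maximal semisimple quotient) of $\Ind_{G_J}(V)$ is simple'' to ``$\Ind_{G_J}(V)$ is \emph{itself} simple'' in our restricted setting.

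The key step is the left-invertibility criterion. By \cite[Lemma 5.20]{Steinberg2016} together with \cite[Corollary 4.22]{Steinberg2016}, if the structure (sandwich) matrix $P$ of the regular Rees factor $M^0(G_J,A,B,P)$ is left invertible as a matrix over $\Bbbk G_J$ (equivalently, the associated map of $\Bbbk G_J$-modules is split injective, so that $\Bbbk\Lc(e)$ is a projective right $\Bbbk G_J$-module and the Schützenberger representation is ``faithful enough''), then for every $V\in\Irr\Bbbk G_J$ the induced module $\Ind_{G_J}(V)$ has no proper submodule annihilating $J$ from above and coincides with its own top; hence it is simple. The preceding Proposition in the excerpt supplies exactly this hypothesis: the sandwich matrix of every regular $\Jc$-class of a finite right restriction EI-Ehresmann semigroup is left invertible over $\Bbbk G_J$, for any field $\Bbbk$. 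So I would invoke that Proposition, then cite the two results from \cite{Steinberg2016} to conclude $\Ind_{G_J}(V)$ is simple.

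For the second assertion --- that these modules form a complete irredundant list of $\Irr\Bbbk S$ --- I would appeal directly to the Clifford--Munn--Ponizovskii theorem as recalled in the Preliminaries (and \cite[Chapter 5]{Steinberg2016}, \cite{Ganyushkin2009a}): the map $(J,V)\mapsto \widehat{\Ind_{G_J}(V)}$ (the simple top) is a bijection from the set of pairs $(J,V)$, with $J$ a regular $\Jc$-class and $V\in\Irr\Bbbk G_J$, onto $\Irr\Bbbk S$. Since we have just shown that in our case $\Ind_{G_J}(V)$ equals its own top, this bijection is realized already by the induced modules themselves, with no need to pass to quotients and with no coincidences among non-isomorphic pairs. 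Thus $\{\Ind_{G_J}(V)\}$ is a complete set of representatives of the isomorphism classes of simple $\Bbbk S$-modules.

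I do not expect a serious obstacle here: the statement is essentially a corollary, and the real work was done in the preceding Proposition's combinatorial analysis of the sandwich matrix (using \lemref{Only_Idempotent_In_L_class} to force the top $k\times k$ block to be diagonal with invertible diagonal entries). The only point requiring a little care is bookkeeping about left versus right: because we compose functions right-to-left and use \emph{left} modules, one must make sure the ``left invertibility'' in the Proposition matches the hypothesis in the form of \cite[Corollary 4.22]{Steinberg2016} that guarantees simplicity of the module induced from $\Lc(e)$ (as opposed to the dual statement about $\Rc(e)$). Once that matching is checked, the proof is a one-line citation.
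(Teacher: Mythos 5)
Your proposal matches the paper's own argument: the theorem is stated there as an immediate consequence of the preceding Proposition on left invertibility of the sandwich matrices, combined with \cite[Corollary 4.22]{Steinberg2016}, \cite[Lemma 5.20]{Steinberg2016}, and the Clifford--Munn--Ponizovskii theorem for completeness of the list. No substantive difference in route.
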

\begin{rem}
Assume in addition that $S$ is regular and that the orders of all
subgroups of $S$ are invertible in $\Bbbk$. Then the left invertibility
of the sandwich matrices implies that the indecomposable injective
modules of $S$ are given by the co-induced modules for regular $\Jc$
classes and $V\in\Irr G_{J}$ (see \cite[Theorem 4.4]{Margolis2011}).
The co-induced module is define by 
\[
\Coind_{G_{J}}(V)=\Hom_{\Bbbk G_{J}}(\Bbbk\Rc(e),V)
\]
(for some idempotent $e\in J$). In this case $\Coind_{G_{J}}(V)$
is the injective envelope of $\Ind_{G_{J}}(V)$. In addition, it follows
that in this case the global dimension of the algebra of $S$ is bounded
by one less than the longest chain of $\Jc$-classes of $S$. The
above results on indecomposable injective modules are true even if
we replace the requirement of regularity by being left Fountain, see
\cite[Theorem 4.8]{Margolis2018B}. 
\end{rem}

\section{\label{sec:Construction_of_Lt_modules}Construction of $\Lt$ - modules}

Our next goal is to obtain a description of indecomposable projective
modules of finite right restriction EI-Ehresmann semigroups. The description
involves an induction using the $\Lt$ relation instead of $\Lc$.
In this intermediate section we consider several cases where such
a construction yields a well defined $\Bbbk S$-module structure.

Let $S$ be a fixed semigroup and let $E\subseteq S$ be a subset
of idempotents. 
\begin{lem}
\label{lem:S_Acts_on_the_left}Let $a\in S$ and denote by $\Lt(a)$
the $\Lt$-class of $a$. Then $S$ acts by partial functions on $\Lt(a)$
according to 
\[
s\cdot x=\begin{cases}
sx & sx\in\Lt(a)\\
\text{undefined} & \text{otherwise}
\end{cases}
\]
for $s\in S$ and $x\in\Lt(a)$. 
\end{lem}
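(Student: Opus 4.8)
The plan is to verify that the "partial multiplication" formula genuinely defines a monoid action, i.e. that for $s,t\in S$ and $x\in\Lt(a)$ the two ways of computing $(st)\cdot x$ agree, including the bookkeeping of when each side is defined. The only content here is the following: if $sx\in\Lt(a)$ (so that $s\cdot x$ is defined), then $\Lt(tx)$ and... wait — more precisely, I first need the basic observation that multiplying on the \emph{left} can only move an element within its own $\Lt$-class or out of it in a controlled way. The key fact I would isolate at the outset is: \emph{for any $y\in S$ and any $s\in S$, one has $sy\Lt y$ whenever $sy\LeqLt y$ holds, and in general $sy\LeqLt y$ need not hold} — so the real point is that $\Lt$ is defined purely in terms of right identities from $E$, and left multiplication interacts with right identities only through the hypothesis that no extra structure is assumed. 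Concretely: if $e\in E$ and $ye=y$, then $(sy)e=s(ye)=sy$, so every right identity of $y$ is a right identity of $sy$; that is, $sy\LeqLt y$ \emph{always} holds, with no Ehresmann hypothesis needed. This monotonicity is the engine of the whole argument.

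With that in hand the proof is essentially formal. First I would check $1\cdot x = x$ is not needed (there is no identity assumed); instead I check directly that the formula respects composition. Fix $x\in\Lt(a)$ and $s,t\in S$. By the monotonicity observation, $tx\LeqLt x$, hence $tx\LeqLt a$. Case one: $tx\in\Lt(a)$, i.e. $t\cdot x$ is defined and equals $tx$. Then $s\cdot(t\cdot x)$ is defined iff $s(tx)\in\Lt(a)$, i.e. iff $(st)x\in\Lt(a)$, which is exactly the condition for $(st)\cdot x$ to be defined, and in that case both equal $(st)x$ by associativity in $S$. Case two: $tx\notin\Lt(a)$, so $t\cdot x$ is undefined and the left-hand side $s\cdot(t\cdot x)$ is undefined; I must then show $(st)\cdot x$ is also undefined, i.e. $(st)x\notin\Lt(a)$. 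Here I use monotonicity again in the form $(st)x = s(tx)\LeqLt tx \lneqq_{\widetilde{\mathcal L}_E} x$ — the strict inequality because $tx\notin\Lt(a)=\Lt(x)$ while $tx\LeqLt x$. Transitivity of $\LeqLt$ then gives $(st)x\lneqq_{\widetilde{\mathcal L}_E} x$, so $(st)x\notin\Lt(x)=\Lt(a)$, as required.

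The main obstacle — though it is a mild one — is making sure the "strict descent is preserved under further left multiplication" step in case two is airtight: one needs that $y\LeqLt x$ and $y\notin\Lt(x)$ together with $z\LeqLt y$ force $z\notin\Lt(x)$, which is immediate from transitivity of the preorder $\LeqLt$ (if $z\Lt x$ then $x\LeqLt z\LeqLt y$, forcing $y\Lt x$, a contradiction). I would state this as a one-line sublemma about the preorder. No property of $E$ beyond "subset of idempotents" is used, and in particular neither the Ehresmann axioms nor finiteness enter — consistent with the lemma being stated for an arbitrary semigroup $S$ with an arbitrary idempotent subset $E$. I expect the write-up to be under half a page.
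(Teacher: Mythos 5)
Your proposal is correct and rests on exactly the same two computations as the paper's proof: left multiplication preserves $\LeqLt$ (since $ye=y\implies (sy)e=sy$) and preserves strict descent below $a$. The paper merely packages these as the statements that $L_{1}=\{x\mid x\LeqLt a\}$ and $L_{2}=\{x\mid x\LneqqLt a\}$ are left ideals, so that the partial action is the Rees quotient $L_{1}/L_{2}$, whereas you verify the compatibility with composition directly; the content is the same.
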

\begin{proof}
The set $L_{1}=\{x\in S\mid x\LeqLt a\}$ is a left ideal of $S$.
For if $x\in L_{1}$ and $s\in S$ then 
\[
ae=a\implies xe=x\implies sxe=sx
\]
for every $e\in E$, so $sx\in L_{1}$ as well. The set $L_{2}=\{x\in S\mid x\LneqqLt a\}$
is also a left ideal of $S$. Indeed, assume $x\in L_{2}$ but $sx\Lt a$
for some $s\in S$. Then 
\[
xe=x\implies sxe=sx\implies ae=a
\]
for every $e\in E$, so $x\Lt a,$ a contradiction. Therefore, $S$
acts by left multiplication on $L_{1}$ and $L_{2}$. It is now clear
that the required action is the Rees quotient $L_{1}/L_{2}$. 
\end{proof}
Now, choose a projection $e\in E$. Recall that we denote the group
$\Hc$-class of $e$ by $G_{e}$. In general, right multiplication
does not induce a right action of the group $G_{e}$ on $\Lt(e)$.
For instance consider the following example. 
\begin{example}
Let $S=\PT_{2}$ and recall that we compose from right to left. Choose
\[
E=\{\id=\id_{\{1,2\}},\id_{\{1\}},\id_{\varnothing}\}
\]
 and $\text{\mbox{\ensuremath{e=\id}}}$ is the identity function.
We have $\id_{\{2\}}\Lt\id$ but we can take the transposition $\mbox{\ensuremath{(12)\in G_{e}=S_{2}}}$
and acting on the right we obtain 
\[
\id_{\{2\}}(12)=\left(\begin{array}{cc}
1 & 2\\
2 & \varnothing
\end{array}\right)
\]
which is not in the $\Lt$-class of $\id$. 
\end{example}
We want to consider cases where the group $G_{e}$ acts on the right
of $\Lt(e)$ by right multiplication. 
\begin{defn}
A semigroup $S$ is called \emph{right Fountain} if every $\widetilde{\mathcal{L}}_{E(S)}$
class contains an idempotent, where $E(S)$ is the set of all idempotents
of $S$. 
\end{defn}
\begin{prop}[{{\cite[Corollary 3.4]{Margolis2018B}}}]
Let $S$ be a finite right Fountain semigroup and let $e\in E(S)$
be an idempotent. Then $G_{e}$ acts on the right of $\widetilde{\mathcal{L}}_{E(S)}(e)$. 
\end{prop}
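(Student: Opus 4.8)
The plan is to fix an $x$ with $x\,\widetilde{\mathcal{L}}_{E(S)}\,e$ and a $g\in G_e$, and to prove that $xg$ again lies in $\widetilde{\mathcal{L}}_{E(S)}(e)$; granting this for all such $x$ and $g$, the maps $x\mapsto xg$ obviously assemble into a right action of $G_e$ on $\widetilde{\mathcal{L}}_{E(S)}(e)$, since $xe=x$ and $(xg)h=x(gh)$. It is convenient to record the relation in terms of right identities: writing $R(a)=\{f\in E(S)\mid af=a\}$, we have $a\,\widetilde{\mathcal{L}}_{E(S)}\,b$ if and only if $R(a)=R(b)$; also recall that in the group $G_e$ we have $ge=eg=g$ and an inverse $g^{-1}$ with $gg^{-1}=g^{-1}g=e$. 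From $xg\cdot e=x(ge)=xg$ we immediately get $e\in R(xg)$, and from $x\,\widetilde{\mathcal{L}}_{E(S)}\,e$ we get $R(x)=R(e)$ and in particular $xe=x$.

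The key move — and the only place the right Fountain hypothesis enters — is to replace $xg$ by an idempotent in its $\widetilde{\mathcal{L}}_{E(S)}$-class: choose $i\in E(S)$ with $R(i)=R(xg)$. Then $e\in R(i)$ forces $ie=i$, and $i\in R(i)=R(xg)$ gives $xgi=xg$, hence $x(gig^{-1})=(xgi)g^{-1}=(xg)g^{-1}=xe=x$. The reason to pass to the idempotent $i$ rather than to an arbitrary element of $R(xg)$ is that $ie=i$ makes the conjugate $gig^{-1}$ itself idempotent: $(gig^{-1})(gig^{-1})=g\,i(g^{-1}g)i\,g^{-1}=g\,(iei)\,g^{-1}=gig^{-1}$, using $iei=(ie)i=i$. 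Thus $gig^{-1}$ is an idempotent of $S$ fixing $x$ on the right, so $gig^{-1}\in R(x)=R(e)$; but $e(gig^{-1})=(eg)ig^{-1}=gig^{-1}$, so in fact $gig^{-1}=e$.

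From $gig^{-1}=e$ I would conjugate back: $eie=(g^{-1}g)i(g^{-1}g)=g^{-1}(gig^{-1})g=g^{-1}eg=e$, and combined with $ie=i$ this gives $ei=e(ie)=eie=e$. Hence $ei=e$ and $ie=i$, so $e\in S^1i$ and $i\in S^1e$, i.e. $i\,\mathcal{L}\,e$ in the sense of Green; since $\mathcal{L}\subseteq\widetilde{\mathcal{L}}_{E(S)}$ (noted in Section~\ref{subsec:EhresmannSgp}) we conclude $i\,\widetilde{\mathcal{L}}_{E(S)}\,e$, that is $R(i)=R(e)$, and therefore $R(xg)=R(i)=R(e)$, which is exactly $xg\,\widetilde{\mathcal{L}}_{E(S)}\,e$. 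The hard part is really just the first sentence of the second paragraph: extracting from $xg$ an idempotent on which conjugation by $g$ is well behaved; once $gig^{-1}$ is known to be idempotent, the remaining steps are short manipulations with the group identities of $G_e$ and do not use finiteness again.
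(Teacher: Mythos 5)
Your argument is correct. Note that the paper itself does not prove this proposition---it is quoted verbatim from \cite[Corollary 3.4]{Margolis2018B}---so there is no in-document proof to compare against; I can only assess your proof on its own terms, and it checks out. The two essential points both hold: (i) the right Fountain hypothesis supplies an idempotent $i$ with $R(i)=R(xg)$, and since $e\in R(xg)$ (because $ge=g$) and $i\in R(i)$ (because $i$ is idempotent), you get both $ie=i$ and $xgi=xg$; (ii) the relation $ie=i$ is exactly what makes $gig^{-1}$ idempotent, via $g^{-1}g=e$ and $iei=i$, after which $gig^{-1}\in R(x)=R(e)$ together with $eg=g$ forces $gig^{-1}=e$, and conjugating back yields $ei=e$, hence $i\,\mathcal{L}\,e$ and so $R(xg)=R(i)=R(e)$ by the containment $\mathcal{L}\subseteq\widetilde{\mathcal{L}}_{E(S)}$. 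One small imprecision worth fixing in the write-up: the sentence contrasting $i$ with ``an arbitrary element of $R(xg)$'' slightly misstates the role of the Fountain hypothesis---$i$ is not chosen from $R(xg)$ but as an idempotent $\widetilde{\mathcal{L}}_{E(S)}$-equivalent to $xg$, and it is this equivalence $R(i)=R(xg)$ (not merely membership in $R(xg)$) that lets you transfer the conclusion $i\,\widetilde{\mathcal{L}}_{E(S)}\,e$ back to $xg$. As you observe, finiteness is never used beyond the Fountain hypothesis itself, which is consistent with the cited source stating the result for finite semigroups only because that is the ambient setting.
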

Another case is where $\Lt$ is a right congruence. 
\begin{lem}
\label{lem:G_Acts_On_The_right_Where_right_cong}Let $S$ be a semigroup
such that $\Lt$ is a right congruence. Then $G_{e}$ acts on the
right of $\Lt(e)$ by right multiplication for every $e\in E$. 
\end{lem}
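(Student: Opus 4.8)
The goal is to show that if $\Lt$ is a right congruence then for each projection $e \in E$ the map $(x,g) \mapsto xg$ is a well-defined right action of $G_e$ on $\Lt(e)$. The plan is to verify three things: (i) right multiplication by an element of $G_e$ sends $\Lt(e)$ into itself; (ii) the identity $e$ of $G_e$ acts as the identity map on $\Lt(e)$; (iii) the action is associative, i.e. $(xg)h = x(gh)$ for $g,h \in G_e$ and $x \in \Lt(e)$. The last point is immediate from associativity in $S$, so the content is in (i) and (ii).

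For (i), take $x \in \Lt(e)$ and $g \in G_e$, so $g \Hc e$; in particular $g \Lc e$, which gives $g \Lt e$ since $\Lc \subseteq \Lt$. Because $\Lt$ is a right congruence, from $x \Lt e$ we get $xg \Lt eg$. Now $eg = g$ since $e$ is the identity of the group $G_e$, and $g \Lt e$, so $xg \Lt e$; hence $xg \in \Lt(e)$, as needed. This is the step I expect to carry the weight of the argument, though it is short — the key observations are merely that $\Hc \subseteq \Lt$ via $\Lc \subseteq \Lt$ and that $eg = g$ in a group with identity $e$.

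For (ii), I note that $x \cdot e = xe$, and I must check $xe = x$ for every $x \in \Lt(e)$. Since $e$ is a projection and $x \Lt e$, the defining property of $\Lt$ applied to the idempotent $e \in E$ gives $ee = e \Leftrightarrow xe = x$; as $ee = e$ holds, we conclude $xe = x$. (Equivalently, $e = e^\ast$ is the unique minimal projection fixing $e$ on the right, and $\Lt$-equivalent elements are fixed on the right by exactly the same projections.) Combining (i), (ii), and the trivial associativity (iii), $G_e$ acts on the right of $\Lt(e)$ by right multiplication. The only mild subtlety is making sure that $g \Lc e$ really does yield $g \Lt e$, but this is exactly the containment $\Lc \subseteq \Lt$ recorded in Section~\subsecref{EhresmannSgp}, so no real obstacle remains.
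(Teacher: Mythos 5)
Your proof is correct and follows essentially the same route as the paper: the closure step ($x\Lt e$ and the right congruence give $xg\Lt eg=g$, and $g\Lt e$ since $\Hc\subseteq\Lt$, hence $xg\Lt e$) is exactly the paper's argument. Your additional verification that $e$ acts as the identity (via $ee=e\Leftrightarrow xe=x$ from the definition of $\Lt$) is a detail the paper leaves implicit but is correctly handled.
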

\begin{proof}
Let $x\in\Lt(e)$ and $g\in G_{e}$. Then $x\Lt e$ so the right congruence
property implies $xg\Lt eg$. Now $g\Hc e$ implies $g\Lt e$ and
$eg=g$ so we have also $xg\Lt e$ as required. 
\end{proof}
\begin{example}
Another trivial case is where $S$ is an $\Hc$-trivial semigroup
hence $G_{e}=\{e\}$ and clearly acts trivially on $\Lt(e)$. 
\end{example}
Recall that $\Bbbk\Lt(e)$ is the $\Bbbk$-vector space with basis
the elements of $\Lt(e)$. The following is an immediate corollary
of \lemref{S_Acts_on_the_left}. 
\begin{cor}
If $G_{e}$ acts on the right of $\Lt(e)$ as in the above cases,
then $\Bbbk\Lt(e)$ is a $\Bbbk S-\Bbbk G_{e}$ bimodule. Therefore,
if $V$ is a $\Bbbk G_{e}$-module, the tensor product 
\[
\Bbbk\Lt(e)\bigotimes_{\mathbb{\Bbbk}G_{e}}V
\]
is a $\mathbb{\Bbbk}S$-module. 
\end{cor}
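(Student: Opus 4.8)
The statement to prove is that if $G_e$ acts on the right of $\Lt(e)$ by right multiplication, then $\Bbbk\Lt(e)$ is a $\Bbbk S$-$\Bbbk G_e$ bimodule, and consequently $\Bbbk\Lt(e)\bigotimes_{\Bbbk G_e}V$ is a $\Bbbk S$-module for any $\Bbbk G_e$-module $V$. The plan is to assemble this from the two ingredients already in hand: \lemref{S_Acts_on_the_left}, which gives the left $\Bbbk S$-module structure on $\Bbbk\Lt(e)$, and the hypothesis that right multiplication by elements of $G_e$ is a well-defined right action on $\Lt(e)$, which gives the right $\Bbbk G_e$-module structure by linear extension.

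The first step is to record that both actions are $\Bbbk$-linear: the left $S$-action of \lemref{S_Acts_on_the_left} is the Rees quotient action $L_1/L_2$, whose linearization to $\Bbbk\Lt(e)$ is a genuine $\Bbbk S$-module (partial functions linearize to linear maps, undefined products going to $0$). Similarly, since $G_e$ is a group acting on the set $\Lt(e)$ by right multiplication (this is the hypothesis, and being a group action means every $g$ permutes $\Lt(e)$), linear extension gives an honest right $\Bbbk G_e$-module structure — no zero values arise here. The second step is the compatibility of the two actions, i.e. that $(s\cdot x)\cdot g = s\cdot(x\cdot g)$ for $s\in S$, $x\in\Lt(e)$, $g\in G_e$. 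This is where a small case check is needed: if $sx\notin\Lt(e)$ then the left-hand side is $0$; one must check the right-hand side is also $0$, i.e. that $s\cdot(xg)\notin\Lt(e)$. But $xg\Lt x$ (since $g$ acts within $\Lt(e)$, we have $xg\in\Lt(e)$, so $xg\Lt e\Lt x$), and $sx\notin\Lt(e)$ means $sx\LneqqLt e$, so $sx$ lies in the left ideal $L_2$; then $s(xg) = (sx)g\in L_2$ as well since $L_2$ is a left ideal closed under the (set) right $G_e$-action — actually more simply, $(sx)g\Lt sx$ by the same reasoning applied to $sx$ in place of $x$, wait, that needs $sx\in\Lt(e)$; instead use that $(sx)g$ and $sx$ generate the same principal left ideal up to the $\Lt$ ordering since $g$ is invertible, so $(sx)g\Lt sx\LneqqLt e$, hence $s\cdot(xg)$ is undefined and the product is $0$. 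If $sx\in\Lt(e)$, then on both sides we are just looking at $(sx)g = s(xg)$ in $S$, which holds by associativity, and both equal the nonzero basis element $(sx)g\in\Lt(e)$. So the bimodule axiom holds.

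The third and final step is routine: once $\Bbbk\Lt(e)$ is established as a $\Bbbk S$-$\Bbbk G_e$ bimodule, the tensor product $\Bbbk\Lt(e)\bigotimes_{\Bbbk G_e}V$ of a right $\Bbbk G_e$-module with a left $\Bbbk G_e$-module $V$ is a left $\Bbbk S$-module via $s\cdot(x\otimes v) = (sx)\otimes v$, the $S$-action passing through the tensor product precisely because the $S$-action commutes with the $G_e$-action. This is the standard change-of-rings construction and needs no further argument.

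The only genuine obstacle is the compatibility check in the case $sx\notin\Lt(e)$: one must be careful that invertibility of $g\in G_e$ is what makes $sx$ and $(sx)g$ equivalent under $\Lt$ (or at least makes $(sx)g$ fail to be in $\Lt(e)$ whenever $sx$ does). Concretely, $x\LeqLt a\iff xe=x\Rightarrow ae=a$, and multiplying on the right by an invertible $g$ with $eg=g$, $g^{-1}\in\Lt(e)$ shows $xg\Lt x$ for any $x$ with $xg\in\Lt(e)$; applying this to $x:=sx$ shows that if $sx$ were moved into $\Lt(e)$ by $g$ it would already have been in $\Lt(e)$, contradiction — so the left-hand and right-hand sides are both $0$. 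Everything else is bookkeeping, so I would present the corollary with a one-line proof citing \lemref{S_Acts_on_the_left} and the displayed compatibility identity.
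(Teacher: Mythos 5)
Your proposal is correct and follows exactly the route the paper intends: the paper states this as an immediate consequence of Lemma \ref{lem:S_Acts_on_the_left} and gives no written proof, and your argument just fills in the routine verifications (linearizing the two actions and checking the bimodule compatibility $(s\cdot x)\cdot g=s\cdot(x\cdot g)$, including the "both sides are $0$" case via invertibility of $g$ and the identity $(sx)e=sx$). The only stylistic issue is the visible mid-proof self-correction; the final argument for the $sx\notin\Lt(e)$ case is sound.
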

\begin{rem}
As mentioned above, if $\Lt$ is replaced by the standard Green's
$\mathcal{L}$ class then the above module is the induced Sch\"{u}tzenberger
module of $J$ and $V$ (where $e$ belongs to the $\Jc$-class $J$)
. 
\end{rem}

\section{\label{sec:ProjectiveModules}Projective modules of right restriction
EI-Ehresmann semigroups}

In this section we study projective modules of finite Ehresmann and
right restriction semigroups. The main result will be in the case
of right restriction EI-Ehresmann semigroups. We start with some lemmas
that will be of later use. 
\begin{lem}
\label{lem:LeftActionNonZero}Let $S$ be an Ehresmann and right restriction
semigroup and let $s,m\in S$. Then, 
\[
s\cdot m\in\Lt(m)\iff m^{+}\leq s^{\ast}
\]
(where $\leq$ is the natural partial order on the subsemilattice
$E$). 
\end{lem}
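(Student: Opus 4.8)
The plan is to establish the two implications of the biconditional separately, working in the Ehresmann right restriction semigroup $S$, and using freely the $(2,1,1)$-identities of Ehresmann semigroups and the right ample identity $ea=a(ea)^{\ast}$. The key translation to keep in mind is that $s\cdot m \in \Lt(m)$ means $sm \in \Lt(m)$, i.e. $(sm)^{\ast} = m^{\ast}$, since $\Lt$-class membership is detected by the unique projection in the class. So the statement to prove is really $(sm)^{\ast} = m^{\ast} \iff m^{+} \leq s^{\ast}$.

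\textbf{The $(\Leftarrow)$ direction.} Assume $m^{+}\leq s^{\ast}$, which by definition of the order on $E$ means $s^{\ast}m^{+} = m^{+}$ (equivalently $s^{\ast}m^{+}=m^{+}s^{\ast}=m^{+}$, as $E$ is a semilattice). First I would observe that $sm = s(m^{+}m) = (sm^{+})m$, and since $m^{+}\leq s^{\ast}$ we can try to show $sm^{+}$ behaves like $sm^{+}$ with full "left content" — more precisely, I want $(sm)^{\ast}=m^{\ast}$. Using the right congruence identity $(ab)^{\ast}=(a^{\ast}b)^{\ast}$, compute $(sm)^{\ast} = (s^{\ast}m)^{\ast}$. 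Now $s^{\ast}m$: apply the right ample identity with $e = s^{\ast}\in E$ to get $s^{\ast}m = m(s^{\ast}m)^{\ast}$, so $(s^{\ast}m)^{\ast} = (m(s^{\ast}m)^{\ast})^{\ast} = (m^{\ast}(s^{\ast}m)^{\ast})^{\ast} = (s^{\ast}m)^{\ast}m^{\ast} \cdot(\ldots)$ — here I should instead directly note $(s^{\ast}m)^{\ast}\leq m^{\ast}$ always, and use $m^{+}\leq s^{\ast}$ to get the reverse: since $m^{+}m = m$, we have $s^{\ast}m = s^{\ast}m^{+}m = m^{+}m = m$ (using $s^{\ast}m^{+}=m^{+}$!), hence $(s^{\ast}m)^{\ast} = m^{\ast}$, giving $(sm)^{\ast}=m^{\ast}$ as desired.

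\textbf{The $(\Rightarrow)$ direction.} Assume $(sm)^{\ast}=m^{\ast}$; I want $s^{\ast}m^{+}=m^{+}$. By the right congruence identity, $(sm)^{\ast}=(s^{\ast}m)^{\ast}$, so $(s^{\ast}m)^{\ast}=m^{\ast}$. Apply the right ample identity: $s^{\ast}m = m(s^{\ast}m)^{\ast} = m m^{\ast} = m$. So $s^{\ast}m = m$. Now apply $^{+}$ and use the left congruence identity $(ab)^{+}=(ab^{+})^{+}$: $m^{+} = (s^{\ast}m)^{+} = (s^{\ast}m^{+})^{+}$. But $s^{\ast}m^{+}\in E$ is already a projection (product of two commuting idempotents in $E$), so $(s^{\ast}m^{+})^{+} = s^{\ast}m^{+}$, yielding $m^{+} = s^{\ast}m^{+}$, i.e. $m^{+}\leq s^{\ast}$.

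\textbf{Expected main obstacle.} Neither direction is deep — both come down to combining the right congruence identity $(ab)^{\ast}=(a^{\ast}b)^{\ast}$ with the right ample identity $ea=a(ea)^{\ast}$ in the right order, and the only subtle point is being careful that $s\cdot m\in\Lt(m)$ unpacks to $(sm)^{\ast}=m^{\ast}$ rather than to a literal $\Lt$-equivalence requiring checking all of $E$ (this is fine because the unique projection characterizes the $\Lt$-class, and $(sm)^{\ast}$ is that projection for $sm$ by the general Ehresmann machinery). The one place to double-check is the step "$s^{\ast}m^{+}=m^{+}\Rightarrow s^{\ast}m=m$," which uses associativity and $m^{+}m=m$; and symmetrically whether $\leq$ here should be read as $\leq_{l}$ or $\leq_{r}$ — but on $E$ both coincide with the semilattice order, so there is no ambiguity. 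I expect the whole proof to be four or five lines once the unpacking is stated.
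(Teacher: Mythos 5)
Your proof is correct and follows essentially the same route as the paper's: unpack $s\cdot m\in\Lt(m)$ as $(sm)^{\ast}=m^{\ast}$, use the right ample and right congruence identities to pivot through the condition $s^{\ast}m=m$, and translate that into $m^{+}\leq s^{\ast}$. The only cosmetic differences are that in the $(\Leftarrow)$ direction you compute $(sm)^{\ast}=(s^{\ast}m)^{\ast}=m^{\ast}$ directly where the paper derives the two inequalities $m^{\ast}\leq(sm)^{\ast}$ and $(sm)^{\ast}\leq m^{\ast}$ separately, and in the $(\Rightarrow)$ direction you apply the left congruence identity to get $m^{+}=s^{\ast}m^{+}$ where the paper simply invokes the minimality of $m^{+}$ among left identities of $m$ in $E$.
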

\begin{proof}
First note that 
\[
s\cdot m\in\Lt(m)\iff(sm)^{\ast}=m^{\ast}.
\]

If $(sm)^{\ast}=m^{\ast}$ then by the right ample identity 
\[
s^{\ast}m=m(s^{\ast}m)^{\ast}
\]
and by the right congruence identity 
\[
m(s^{\ast}m)^{\ast}=m(sm)^{\ast}=mm^{\ast}=m.
\]
Since $m^{+}$ is the minimal projection which is a left identity
of $m$ we obtain $\mbox{\ensuremath{m^{+}\leq s^{\ast}}}$. In the
other direction, assume $m^{+}\leq s^{\ast}$ which implies $s^{\ast}m=m.$
By the right ample and right congruence identities we have 
\[
m=s^{\ast}m=m(s^{\ast}m)^{\ast}=m(sm)^{\ast}
\]
which implies that $m^{\ast}\leq(sm)^{\ast}$. However, $smm^{\ast}=sm$
so $(sm)^{\ast}\leq m^{\ast}$ hence $(sm)^{\ast}=m^{\ast}$ as required. 
\end{proof}
\begin{defn}
Let $\leq$ be a poset on a set $S$ and let $s\in S$. The \emph{principal
down ideal} generated by $s$ is the set $s\downarrow=\{x\in S\mid x\leq s\}$. 
\end{defn}
Now consider the poset $\leq_{l}$ on an Ehresmann semigroup $S$.
Recall that on the set of projections $E$, the poset $\leq_{l}$
coincide with the natural semilattice order denoted by $\leq$. Therefore
\[
s\downarrow=\{x\in S\mid x\leq_{l}s\}
\]
and 
\[
s^{\ast}\downarrow=\{e\in E\mid e\leq s^{\ast}\}
\]
for every $s\in S$.

The following lemma is well-known and we prove it for the sake of
completeness. 
\begin{lem}
\label{lem:PosetIsoTheorem}Let $S$ be an Ehresmann semigroup and
let $s\in S$. The function $F:s^{\ast}\downarrow\to s\downarrow$
defined by 
\[
f(e)=se
\]
is an isomorphism of posets with inverse $G:s\downarrow\to s^{\ast}\downarrow$
defined by $G(x)=x^{\ast}$. 
\end{lem}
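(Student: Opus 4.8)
The plan is to verify directly that the two stated maps are mutually inverse order-preserving maps. First I would check that $F$ is well-defined, i.e.\ that $se \leq_l s$ whenever $e \leq s^{\ast}$: since $e \in E$ we have $se = s e$ with $e$ a projection, and by definition $x \leq_l s$ exactly when $x = sf$ for some $f\in E$, so $se \in s\!\downarrow$ trivially. Likewise I would check $G$ is well-defined: if $x \leq_l s$, say $x = se$ with $e\in E$, then $x^{\ast} = (se)^{\ast} = (s^{\ast}e)^{\ast}$ by the right congruence identity, and since $s^{\ast}, e$ are commuting idempotents $s^{\ast}e \in E$, so $(s^{\ast}e)^{\ast} = s^{\ast}e \leq s^{\ast}$; hence $x^{\ast} \in s^{\ast}\!\downarrow$.

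Next I would show $G\circ F = \id$ and $F \circ G = \id$. For $e \in s^{\ast}\!\downarrow$ we have $G(F(e)) = (se)^{\ast} = (s^{\ast}e)^{\ast} = s^{\ast}e = e$, using the right congruence identity and then $e \leq s^{\ast} \Rightarrow s^{\ast}e = e$. For $x \in s\!\downarrow$, write $x = se$ with $e\in E$; then $F(G(x)) = s x^{\ast}$, and I must show $sx^{\ast} = x$. But $x = se$ implies $x x^{\ast} = x$ (definition of $x^{\ast}$) and also $x \leq_l s$ means $x = sx^{\ast}$ is exactly the defining relation $x = sa^{\ast}$-type equality; more carefully, $x = se$ gives $x^{\ast} = (se)^{\ast} \leq e$ (since $(se)\cdot e = se$, wait one shows $x^\ast \le e$ directly), hence $s x^{\ast} = s e x^{\ast} $\ldots\ the cleanest route: $x = se$ so $x = xe$\ldots\ actually since $x \leq_l s$ by definition means precisely $x = s x^{\ast}$, so $F(G(x)) = s x^{\ast} = x$ is immediate from the very definition of $\leq_l$. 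So the only real content is in the well-definedness and in the single identity $(se)^\ast = e$ for $e \le s^\ast$.

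Finally I would check both maps are order-preserving. If $e_1 \leq e_2$ in $s^{\ast}\!\downarrow$, then $e_1 = e_1 e_2$, so $s e_1 = s e_1 e_2 = (s e_1) e_2$, which exhibits $s e_1 \leq_l s e_2$; thus $F$ is monotone. If $x_1 \leq_l x_2$ in $s\!\downarrow$, then $x_1 = x_1 x_2^{\ast}$ wait $x_1 = x_2 x_1^\ast$, and applying the right congruence identity, $x_1^{\ast} = (x_2 x_1^{\ast})^{\ast} = (x_2^{\ast} x_1^{\ast})^{\ast} = x_2^{\ast} x_1^{\ast}$, which gives $x_1^{\ast} \leq x_2^{\ast}$; thus $G$ is monotone. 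A bijection between posets that is monotone with monotone inverse is a poset isomorphism, completing the proof.

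The main obstacle, such as it is, is purely bookkeeping: making sure each step uses only the Ehresmann identities $(ab)^{\ast} = (a^{\ast}b)^{\ast}$, the characterization of $a^{\ast}$ as the least projection fixing $a$ on the right, and the characterization of $\leq_l$ as $x = s x^{\ast}$ (equivalently $x = se$ for some $e \in E$), without accidentally invoking left-restriction or EI hypotheses that are not assumed in this lemma. I do not expect any genuine difficulty; the statement is, as the text says, well-known and essentially formal.
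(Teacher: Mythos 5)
Your proof is correct and takes essentially the same route as the paper's: both rest on the right congruence identity $(ab)^{\ast}=(a^{\ast}b)^{\ast}$, the characterization of $x^{\ast}$ as the least projection fixing $x$ on the right, and the definition of $\leq_{l}$; your extra well-definedness checks are harmless additions the paper leaves implicit. One small slip worth fixing: in the monotonicity of $F$ you wrote $se_{1}=(se_{1})e_{2}$, which as written exhibits $se_{1}\leq_{l}se_{1}$ rather than $se_{1}\leq_{l}se_{2}$; you want $se_{1}=s(e_{2}e_{1})=(se_{2})e_{1}$, using commutativity of $E$, which is exactly the computation in the paper.
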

\begin{proof}
First note that $e_{1}\leq e_{2}$ implies $se_{1}\leq_{l}se_{2}$
since $se_{2}e_{1}=se_{1}$ so $F$ is a homomorphism of posets .
Now, $x_{1}\leq_{l}x_{2}$ says that $x_{1}=x_{2}x_{1}^{\ast}$. By
the right congruence identity 
\[
x_{2}^{\ast}x_{1}^{\ast}=(x_{2}^{\ast}x_{1}^{\ast})^{\ast}=(x_{2}x_{1}^{\ast})^{\ast}=x_{1}^{\ast}
\]
so 
\[
G(x_{1})=x_{1}^{\ast}\leq x_{2}^{\ast}=G(x_{2})
\]
hence $G$ is also a homomorphism of posets. Finally, we show that
$F$ and $G$ are inverses of each other: 
\[
G(F(e))=G(se)=(se)^{\ast}=s^{\ast}e=e
\]
since $e\leq s^{\ast}$ and 
\[
F(G(x))=F(x^{\ast})=sx^{\ast}=x
\]
since $x\leq_{l}s$. 
\end{proof}
Lawson \cite{Lawson1991} proved that the category of all Ehresmann
semigroups is isomorphic to the category of all Ehresmann categories
(with an appropriate choice of morphisms). In fact, the function ${\bf C}$
described in \subsecref{EhresmannSgp} is the object part of this
isomorphism. Another link between certain Ehresmann semigroups and
their corresponding categories appear when we consider their algebras
as seen in the following theorem. 
\begin{thm}[{{\cite[Theorem 1.5]{Stein2018erratum}}}]
\label{thm:MainIso}Let $\Bbbk$ be any commutative unital ring,
let $S$ be a finite Ehresmann and right restriction semigroup and
let $\mathcal{C}$ be the corresponding Ehresmann category. There
is an isomorphism of $\mathbb{\Bbbk}$-algebras $\mathbb{\Bbbk}S\simeq\mathbb{\Bbbk}\mathcal{C}$.
Explicit isomorphisms $\varphi:\mathbb{\Bbbk}S\rightarrow\mathbb{\Bbbk}\mathcal{C}$,
$\psi:\Bbbk\mathcal{C}\rightarrow\mathbb{\Bbbk}S$ are defined (on
basis elements) by 
\[
\varphi(s)=\sum_{t\leq_{l}s}C(t)
\]

\[
\psi(C(x))=\sum_{y\leq_{l}x}\mu(y,x)y
\]

where $\mu$ is the M�bius function of the poset $\leq_{l}$ (see
\cite[Chapter 3]{Stanley1997}). 
\end{thm}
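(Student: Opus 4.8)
The plan is to take $\varphi$ and $\psi$ exactly as defined in the statement and to verify that they are mutually inverse $\Bbbk$-linear maps and that $\varphi$ is multiplicative. Since $S$ is finite and the morphisms of $\mathcal{C}$ are in bijection with the elements of $S$, the algebras $\Bbbk S$ and $\Bbbk\mathcal{C}$ have the same dimension, so it suffices to check that $\varphi$ is a multiplicative bijection; unitality, if one insists on it, then comes for free.

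First I would dispose of the bijection claim by pure M\"{o}bius inversion over the finite poset $(S,\leq_{l})$. Expanding definitions,
\[
\psi(\varphi(s))=\sum_{t\leq_{l}s}\psi(C(t))=\sum_{t\leq_{l}s}\sum_{y\leq_{l}t}\mu(y,t)\,y=\sum_{y\leq_{l}s}\Bigl(\sum_{y\leq_{l}t\leq_{l}s}\mu(y,t)\Bigr)y=s
\]
by the defining relation $\sum_{y\leq_{l}t\leq_{l}s}\mu(y,t)=\delta_{y,s}$ of the M\"{o}bius function, and symmetrically $\varphi(\psi(C(x)))=C(x)$ using the companion relation $\sum_{t\leq_{l}y\leq_{l}x}\mu(y,x)=\delta_{t,x}$. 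This step uses only finiteness of $S$, no Ehresmann structure.

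The core is to show $\varphi(s_{1})\varphi(s_{2})=\varphi(s_{1}s_{2})$. In $\Bbbk\mathcal{C}$ the product $C(t_{1})\cdot C(t_{2})$ equals $C(t_{1}t_{2})$ when $t_{2}^{+}=t_{1}^{\ast}$ and vanishes otherwise, so
\[
\varphi(s_{1})\varphi(s_{2})=\sum_{\substack{t_{1}\leq_{l}s_{1},\,t_{2}\leq_{l}s_{2}\\ t_{1}^{\ast}=t_{2}^{+}}}C(t_{1}t_{2}),
\]
and I would prove that the multiplication map $(t_{1},t_{2})\mapsto t_{1}t_{2}$ is a bijection from $P=\{(t_{1},t_{2}):t_{1}\leq_{l}s_{1},\,t_{2}\leq_{l}s_{2},\,t_{1}^{\ast}=t_{2}^{+}\}$ onto $Q=\{t:t\leq_{l}s_{1}s_{2}\}$; the sum then collapses to $\sum_{t\leq_{l}s_{1}s_{2}}C(t)=\varphi(s_{1}s_{2})$. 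For a pair in $P$, writing $t_{i}=s_{i}t_{i}^{\ast}$ one gets $t_{1}t_{2}=s_{1}t_{2}^{+}t_{2}=s_{1}t_{2}=s_{1}s_{2}t_{2}^{\ast}\in Q$. For injectivity, $t_{1}=s_{1}t_{1}^{\ast}$ and the right congruence identity give $t_{2}^{+}=t_{1}^{\ast}=s_{1}^{\ast}t_{1}^{\ast}\leq s_{1}^{\ast}$, so \lemref{LeftActionNonZero} yields $t^{\ast}=(s_{1}t_{2})^{\ast}=t_{2}^{\ast}$ for $t=t_{1}t_{2}$, whence $t_{2}=s_{2}t^{\ast}$ and $t_{1}=s_{1}t_{2}^{+}$ are recovered from $t$. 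For surjectivity, given $t=s_{1}s_{2}t^{\ast}$ with $t^{\ast}\leq(s_{1}s_{2})^{\ast}$, I would set $t_{2}:=s_{2}t^{\ast}$ and $t_{1}:=s_{1}t_{2}^{+}$; the right ample identity together with the right congruence identity gives $s_{1}^{\ast}s_{2}=s_{2}(s_{1}^{\ast}s_{2})^{\ast}=s_{2}(s_{1}s_{2})^{\ast}$, hence $s_{1}^{\ast}t_{2}=s_{2}(s_{1}s_{2})^{\ast}t^{\ast}=s_{2}t^{\ast}=t_{2}$, so $t_{2}^{+}\leq s_{1}^{\ast}$, $t_{1}^{\ast}=s_{1}^{\ast}t_{2}^{+}=t_{2}^{+}$, and $t_{1}t_{2}=s_{1}t_{2}=t$; thus $(t_{1},t_{2})\in P$ maps to $t$.

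I expect surjectivity to be the only genuine obstacle, and it is exactly there that the right restriction hypothesis is indispensable, through the right ample identity $b^{\ast}a=a(b^{\ast}a)^{\ast}$; this is consistent with the announced counterexample showing that the isomorphism can fail for Ehresmann semigroups that are neither left nor right restriction. The remaining pieces---the M\"{o}bius inversion and the routine identities used for well-definedness and injectivity---are mechanical, and if unitality of the algebras is required I would close by noting that a multiplicative bijection automatically sends the identity $1\in S$, whose $\leq_{l}$-down-set is $E$, to $\sum_{e\in E}C(e)$, the identity of $\Bbbk\mathcal{C}$.
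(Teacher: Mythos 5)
The paper itself offers no proof of Theorem \ref{thm:MainIso}: it is quoted verbatim from \cite[Theorem 1.5]{Stein2018erratum}, so there is no in-paper argument to compare against. Your self-contained proof is correct and is essentially the argument of the cited source: M\"obius inversion over the finite poset $(S,\leq_l)$ gives that $\varphi$ and $\psi$ are mutually inverse linear maps, and multiplicativity reduces to showing that $(t_1,t_2)\mapsto t_1t_2$ is a bijection from $\{(t_1,t_2): t_1\leq_l s_1,\ t_2\leq_l s_2,\ t_1^\ast=t_2^+\}$ onto the down-set of $s_1s_2$. All three steps (well-definedness via $t_1t_2=s_1t_2=s_1s_2t_2^\ast$, injectivity via $t^\ast=t_2^\ast$, surjectivity via $t_2:=s_2t^\ast$, $t_1:=s_1t_2^+$) check out against the identities available in the paper. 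One small correction to your closing commentary: the right restriction hypothesis is not confined to surjectivity --- your injectivity step invokes Lemma \ref{lem:LeftActionNonZero} in the direction $m^+\leq s^\ast\Rightarrow (sm)^\ast=m^\ast$, whose proof in the paper also uses the right ample identity. So the hypothesis enters in both halves of the bijection, which is consistent with \cite[Lemma 4.3]{Wang2017} (cited in the appendix) to the effect that $\varphi$ is a homomorphism if and only if $S$ is right restriction.
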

\begin{rem}
This theorem is a generalization of several results due to Solomon
\cite{Solomon1967}, Steinberg \cite{Steinberg2006}, Guo and Chen
\cite{Guo2012} and the second author \cite{Stein2016}. It was further
generalized by Wang \cite{Wang2017} to the class of $P$-Ehresmann
and right\textbackslash{}left $P$-restriction semigroups. $P$-Ehresmann
and $P$-restriction semigroups were introduced by Jones in \cite{Jones2012}. 
\end{rem}
We want to use the isomorphism of \thmref{MainIso} in order to study
the projective modules of $\mathbb{\Bbbk}S$ where $S$ is an Ehresmann
and right restriction semigroup. Let $\mathbb{\Bbbk}$ be a field,
and let $e\in E$ be a projection. Recall that we consider the composition
in the Ehresmann category $\mathcal{C}$ ``from right to left''
so a morphism $C(a)$ has domain $a^{\ast}$ and range $a^{+}$. The
idempotent $e$ is an object of $\mathcal{C}$ with identity morphism
$C(e)$. Consider the set $\mathcal{C}\cdot C(e)$ of all the morphisms
whose domain is $e$. The category $\mathcal{C}$ acts on the left
of $\mathcal{C}\cdot C(e)$ by concatenation of morphisms. More precisely,
recall that an action of $\mathcal{C}$ is a functor $\mathcal{F}$
from $\mathcal{C}$ to the category of sets and functions. Here, for
every object $c$ of $\mathcal{C}$ we set $\mathcal{F}(c)=\mathcal{C}(e,c)$
to be the set of all morphisms with domain e and range $c$. For every
morphism $C(m)\in\mathcal{C}(c_{1},c_{2})$ the morphism $\mathcal{F}(C(m)):\mathcal{C}(e,c_{1})\to\mathcal{C}(e,c_{2})$
is defined by left composition by $C(m)$. This action yields a $\mathbb{\Bbbk}\mathcal{C}$-module
denoted by $\mathbb{\Bbbk}\mathcal{C}\cdot C(e)$ whose elements are
linear combinations of morphisms in $\mathcal{C}\cdot C(e)$. Again,
we can describe this module as a functor from $\mathcal{C}$ to the
category of $\Bbbk$-vector spaces and linear transformations, but
we prefer the description via a category algebra. It is clear that
$\mathbb{\Bbbk}\mathcal{C}\cdot C(e)$ is a projective module since
$C(e)$ is an idempotent of $\mathbb{\Bbbk}\mathcal{C}$. Since $\varphi$
of \thmref{MainIso} is an isomorphism, we can view $\mathbb{\Bbbk}\mathcal{C}\cdot C(e)$
also as a $\mathbb{\Bbbk}S$-module whose action $\star$ is defined
by 
\[
s\star C(m)=\varphi(s)\cdot C(m)=\left(\sum_{t\leq_{l}s}C(t)\right)\cdot C(m)
\]
where $s,m\in S$ and $\cdot$ is the action of $\mathbb{\Bbbk}\mathcal{C}$
on $\mathbb{\Bbbk}\mathcal{C}\cdot C(e)$. However, this description
is not natural from the semigroup point of view, so we would like
to describe this module using the structure of the semigroup itself.
Let $\Lt(e)$ be the $\Lt$ -class of $e$. Observe that $m\Lt e$
if and only if the domain of $C(m)$ is $e$. Therefore, the sets
$\Lt(e)$ and $\mathcal{C}\cdot C(e)$ are in one-to-one correspondence.
Furthermore we claim the following. 
\begin{prop}
There is an isomorphism of $\Bbbk S$-modules 
\[
\Phi:\Bbbk\Lt(e)\to\mathbb{\Bbbk}\mathcal{C}\cdot C(e)
\]
defined (on basis elements) by 
\[
\Phi(m)=C(m)
\]
for every $m\in S$. 
\end{prop}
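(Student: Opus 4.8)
The plan is to establish two things about the linear map $\Phi$: that it is a bijection of $\Bbbk$-vector spaces, and that it is compatible with the two $\Bbbk S$-actions. For bijectivity I would invoke the observation recorded just before the statement: a basis element $m$ of $\Bbbk\Lt(e)$ belongs to $\Lt(e)$ if and only if the domain of $C(m)$ equals $e$, i.e. if and only if $C(m)$ is a basis element of $\Bbbk\mathcal{C}\cdot C(e)$. Thus $\Phi$ carries the basis $\Lt(e)$ bijectively onto the basis $\mathcal{C}\cdot C(e)$, hence is a vector-space isomorphism, and the whole content of the proposition lies in $\Bbbk S$-equivariance.

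For equivariance, fix $s\in S$ and a basis element $m\in\Lt(e)$ and compare $\Phi(s\cdot m)$ with $s\star\Phi(m)$, where $s\cdot m$ denotes the Rees-quotient action of \lemref{S_Acts_on_the_left} (with $a=e$) and $\star$ is the action transported through $\varphi$ of \thmref{MainIso}. Expanding, $s\star\Phi(m)=\varphi(s)\cdot C(m)=\sum_{t\leq_{l}s}C(t)\cdot C(m)$, and in $\Bbbk\mathcal{C}$ (composition from right to left) the summand $C(t)\cdot C(m)$ equals $C(tm)$ when $t^{\ast}=m^{+}$ and $0$ otherwise. The decisive simplification is \lemref{PosetIsoTheorem}: the map $e'\mapsto se'$ is a poset isomorphism $s^{\ast}\downarrow\to s\downarrow$, and $(se')^{\ast}=s^{\ast}e'=e'$ for every $e'\leq s^{\ast}$. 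Consequently there is at most one $t\leq_{l}s$ with $t^{\ast}=m^{+}$, namely $t=sm^{+}$, and such a $t$ occurs exactly when $m^{+}\leq s^{\ast}$; in that case $tm=sm^{+}m=sm$ since $m^{+}m=m$, so the whole sum collapses to $C(sm)$, whereas if $m^{+}\not\leq s^{\ast}$ the index set is empty and $s\star\Phi(m)=0$.

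It then remains to match this against the left-hand side, and here \lemref{LeftActionNonZero} does exactly the right thing: $s\cdot m=sm$ lies in $\Lt(m)=\Lt(e)$ precisely when $m^{+}\leq s^{\ast}$, and $s\cdot m=0$ otherwise. Hence $\Phi(s\cdot m)=\Phi(sm)=C(sm)=s\star\Phi(m)$ when $m^{+}\leq s^{\ast}$, and $\Phi(s\cdot m)=0=s\star\Phi(m)$ in the remaining case. Since $\Phi$ is linear and agrees with the $S$-action on all basis elements, it is a $\Bbbk S$-module homomorphism; together with the first paragraph this makes $\Phi$ an isomorphism of $\Bbbk S$-modules.

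I do not anticipate a real obstacle. The only step needing care is the second paragraph, where one must correctly parametrize the divisors $t\leq_{l}s$ by the down-set $s^{\ast}\downarrow$ and pin down the unique surviving term of $\varphi(s)\cdot C(m)$; but \lemref{PosetIsoTheorem} packages exactly this, and \lemref{LeftActionNonZero} then forces the two descriptions to agree in every case.
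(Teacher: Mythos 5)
Your proof is correct and follows essentially the same route as the paper: both reduce $\varphi(s)\cdot C(m)$ to at most one surviving term via \lemref{PosetIsoTheorem}, identify that term as $C(sm^{+}m)=C(sm)$, and use \lemref{LeftActionNonZero} to match the vanishing/nonvanishing cases on both sides. No issues.
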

\begin{proof}
It is clear that $\Phi$ is an isomorphism of $\Bbbk$-vector spaces.
For every $s\in S$ it is left to prove that 
\[
\Phi(s\cdot m)=s\star\Phi(m)
\]
where 
\[
s\star\Phi(m)=\varphi(s)\Phi(m)=\sum_{r\leq_{l}s}C(r)\cdot C(m).
\]
First we prove that $s\cdot m=0\iff{\displaystyle \sum_{r\leq_{l}s}}C(r)\cdot C(m)=0$.
According to \lemref{LeftActionNonZero}, $s\cdot m=0$ implies $m^{+}\not\leq s^{\ast}$
so by \lemref{PosetIsoTheorem} there is no element $t\leq_{l}s$
such that $t^{\ast}=m^{+}$ hence 
\[
\sum_{r\leq_{l}s}C(r)\cdot C(m)=0.
\]
The converse implication follows in the same manner. Now, assume $s\cdot m\neq0$
so $m^{+}\leq s^{\ast}$. By \lemref{PosetIsoTheorem}, there exists
a unique $t\leq_{l}s$ with $t^{\ast}=m^{+}$. In other words, there
is a unique $t\leq_{l}s$ such that $\text{\mbox{\ensuremath{C(t)\cdot C(m)\neq0}}}$.
Therefore, 
\[
s\star\Phi(m)=\sum_{r\leq_{l}s}C(r)\cdot C(m)=C(t)\cdot C(m)=C(tm).
\]
Note that $tm=st^{\ast}m=sm^{+}m=sm$ hence 
\[
\Phi(sm)=C(sm)=C(tm)=s\star\Phi(m)
\]
as required. 
\end{proof}
Let $G_{e}$ be the group $\mathcal{H}$-class of $e$ and let $g\in G_{e}$.
According to \lemref{G_Acts_On_The_right_Where_right_cong} ,$G_{e}$
acts on the right of $\Lt(e)$ since $\Lt$ is a right congruence.
It is also clear that the domain and range of $C(g)$ is $e$, so
$G$ acts on the right of $\mathcal{C}\cdot C(e)$ as well. Moreover,
for every $m\in\Lt(e)$ we have 
\[
C(mg)=C(m)C(g)
\]
so $\Phi$ is not only a $\Bbbk S$ -modules isomorphism but a $\Bbbk S-\Bbbk G_{e}$
bimodule isomorphism. Therefore, we obtain the following corollary. 
\begin{cor}
\label{cor:Proj_Modules_Form}Let $S$ be a finite Ehresmann and right
restriction semigroup and let $e\in E$. Denote by $G_{e}$ the group
$\mathcal{H}$-class of $e$ and let $V$ be any $G_{e}$-module.
Then there is an isomorphism of $\Bbbk$S modules 
\[
\Bbbk\Lt(e)\bigotimes_{\mathbb{\Bbbk}G_{e}}V\simeq\mathbb{\Bbbk}\mathcal{C}\cdot C(e)\bigotimes_{\Bbbk G_{e}}V.
\]
\end{cor}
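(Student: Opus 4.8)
The plan is to deduce this corollary directly from the bimodule isomorphism $\Phi\colon\Bbbk\Lt(e)\to\Bbbk\mathcal{C}\cdot C(e)$ established in the preceding proposition, by applying the functor $-\otimes_{\Bbbk G_e}V$. First I would record that $\Phi$ is an isomorphism of $\Bbbk S$-$\Bbbk G_e$-bimodules: on the left it is a $\Bbbk S$-module map by the previous proposition, and on the right it intertwines the two $G_e$-actions because for $m\in\Lt(e)$ and $g\in G_e$ we have $C(mg)=C(m)C(g)$ (the domain and range of $C(g)$ are both $e$, so this composite is defined and equals $C(mg)$ by the definition of composition in $\mathcal{C}$). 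This observation was already made in the paragraph just before the corollary, so it may be cited rather than reproved.

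The second step is to tensor on the right over $\Bbbk G_e$ with $V$. Since $\Phi$ is a bimodule isomorphism, $\Phi\otimes\id_V$ is a well-defined $\Bbbk$-linear isomorphism $\Bbbk\Lt(e)\otimes_{\Bbbk G_e}V\to\Bbbk\mathcal{C}\cdot C(e)\otimes_{\Bbbk G_e}V$; the left $\Bbbk S$-module structure on each tensor product is inherited from the left $\Bbbk S$-module structure on the first factor, and the map $\Phi\otimes\id_V$ respects it precisely because $\Phi$ is $\Bbbk S$-linear on the left. This is a standard fact about tensor products of bimodules, so I would state it in one line and invoke functoriality of $-\otimes_{\Bbbk G_e}V$.

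There is no serious obstacle here: the corollary is essentially a formal consequence of the proposition. The only point that deserves a word of care is the \emph{existence} of the right $G_e$-action on $\Lt(e)$ in the first place — but this is supplied by \lemref{G_Acts_On_The_right_Where_right_cong}, since in an Ehresmann semigroup $\Lt$ is by definition a right congruence, so $\Bbbk\Lt(e)$ genuinely is a $\Bbbk S$-$\Bbbk G_e$-bimodule and the tensor product on the left-hand side makes sense. I would therefore write the proof as: $\Phi$ is a bimodule isomorphism (by the preceding proposition together with the identity $C(mg)=C(m)C(g)$), hence applying $-\otimes_{\Bbbk G_e}V$ yields the claimed isomorphism of $\Bbbk S$-modules.

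\begin{proof}
By \lemref{G_Acts_On_The_right_Where_right_cong}, $G_{e}$ acts on the right of $\Lt(e)$, so $\Bbbk\Lt(e)$ is a $\Bbbk S$-$\Bbbk G_{e}$-bimodule and the left-hand side is a well-defined $\Bbbk S$-module. The map $\Phi\colon\Bbbk\Lt(e)\to\Bbbk\mathcal{C}\cdot C(e)$ of the preceding proposition is an isomorphism of $\Bbbk S$-modules, and it is moreover right $\Bbbk G_{e}$-linear: for $m\in\Lt(e)$ and $g\in G_{e}$ the morphisms $C(m)$ and $C(g)$ are composable (the range of $C(g)$ and the domain of $C(m)$ are both $e$) and $C(mg)=C(m)C(g)$, so $\Phi(mg)=\Phi(m)C(g)$. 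Hence $\Phi$ is an isomorphism of $\Bbbk S$-$\Bbbk G_{e}$-bimodules. Applying the functor $-\bigotimes_{\Bbbk G_{e}}V$ gives the $\Bbbk$-linear isomorphism $\Phi\otimes\id_{V}$, which is $\Bbbk S$-linear because $\Phi$ is left $\Bbbk S$-linear; this is the required isomorphism
\[
\Bbbk\Lt(e)\bigotimes_{\mathbb{\Bbbk}G_{e}}V\simeq\mathbb{\Bbbk}\mathcal{C}\cdot C(e)\bigotimes_{\Bbbk G_{e}}V.\qedhere
\]
\end{proof}
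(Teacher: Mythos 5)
Your proposal is correct and follows exactly the paper's route: the paper derives this corollary from the bimodule isomorphism $\Phi$ (established in the preceding proposition together with the observation $C(mg)=C(m)C(g)$ in the paragraph just before the corollary) by tensoring with $V$ over $\Bbbk G_{e}$. Nothing is missing.
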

In particular, if $p\in\Bbbk G_{e}$ is a primitive idempotent of
$\Bbbk G_{e}$, and $C(p)$ is the corresponding linear combination
of morphisms, there is an isomorphism 
\[
\mathbb{\Bbbk}\mathcal{C}C(p)=\mathbb{\Bbbk}\mathcal{C}C(e)C(p)\simeq\mathbb{\Bbbk}\mathcal{C}\cdot C(e)\bigotimes_{\Bbbk G_{e}}\Bbbk G_{e}C(p)\simeq\Bbbk\Lt(e)\bigotimes_{\mathbb{\Bbbk}G_{e}}\Bbbk G_{e}p.
\]
Clearly, $\mathbb{\Bbbk}\mathcal{C}C(p)$ is a projective module since
$C(p)$ is an idempotent. So this gives a semigroup theoretic description
of certain projective modules of $S$. However, there is no reason
for the indecomposable projective modules to be of this form. The
situation is much better if we consider (finite) right restriction
EI-Ehresmann semigroups.

A proof of the following lemma can be found in \cite[Lemma 9.31]{Luck1989}
or \cite[Corollary 4.5]{Webb2007}. 
\begin{lem}
\label{lem:EquivalenceOfPrimitiveIdempotents}Let $\mathcal{C}$ be
a finite EI-category and given an object $e$ let $G_{e}=\mathcal{C}(e,e)$
to be its automorphism group and $P_{e}=\{p_{1}^{e},\ldots,p_{m_{e}}^{e}\}$
to be a complete set of primitive orthogonal idempotents for $\Bbbk G_{e}$.
The set 
\[
\bigcup_{e}P_{e}
\]
(where the union is taken over all objects of $\mathcal{C}$) is a
complete set of primitive orthogonal idempotents for $\Bbbk\mathcal{C}$. 
\end{lem}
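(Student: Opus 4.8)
The plan is to identify, for each object $e$ and each $p\in P_{e}$, the corner algebra $p\,\Bbbk\mathcal{C}\,p$ and show that it coincides with $p\,\Bbbk G_{e}\,p$; primitivity in $\Bbbk\mathcal{C}$ will then be inherited from primitivity in $\Bbbk G_{e}$. The starting observation is that the category algebra decomposes as a direct sum $\Bbbk\mathcal{C}=\bigoplus_{f,g}\Bbbk\mathcal{C}(f,g)$ of the spans of the hom-sets, and that the unit of $\Bbbk\mathcal{C}$ is $\sum_{e}\id_{e}$, the sum of the identity morphisms over the (finitely many) objects, since $(\sum_{e}\id_{e})\cdot m=m=m\cdot(\sum_{e}\id_{e})$ for every morphism $m$. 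For a fixed object $e$ the idempotent $\id_{e}$ satisfies $\id_{e}\,\Bbbk\mathcal{C}\,\id_{e}=\Bbbk\mathcal{C}(e,e)$, because $\id_{e}\cdot m\cdot\id_{e}$ is nonzero exactly when $m$ has both domain and range equal to $e$. Here the EI-hypothesis enters decisively: $\mathcal{C}(e,e)=G_{e}$ is a group, so this corner is the group algebra $\Bbbk G_{e}$ sitting inside $\Bbbk\mathcal{C}$, with the same multiplication.

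Granting this corner identity, I would verify the defining requirements in turn. First, each $p\in P_{e}$ is an idempotent of $\Bbbk\mathcal{C}$, since $p\in\Bbbk G_{e}=\id_{e}\,\Bbbk\mathcal{C}\,\id_{e}$ and the product in $\Bbbk G_{e}$ agrees with that of $\Bbbk\mathcal{C}$. Second, for orthogonality, I take $p\in P_{e}$ and $q\in P_{f}$ with $e\neq f$: both products $pq$ and $qp$ vanish, because $p$ is supported on endomorphisms of $e$ and $q$ on endomorphisms of $f$, and no endomorphism of $f$ is composable with an endomorphism of $e$ when $e\neq f$; within a single $P_{e}$, orthogonality is the defining property of the set. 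Third, completeness follows because $\sum_{i}p_{i}^{e}$ is the unit $\id_{e}$ of $\Bbbk G_{e}$, so that $\sum_{e}\sum_{i}p_{i}^{e}=\sum_{e}\id_{e}$ is the unit of $\Bbbk\mathcal{C}$.

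The decisive step is primitivity. I would invoke the standard criterion that an idempotent $p$ of a finite-dimensional $\Bbbk$-algebra $A$ is primitive if and only if the corner ring $pAp$ has no idempotents other than $0$ and $p$ (equivalently, is local). Since $p=\id_{e}\,p\,\id_{e}$, the corner computes as
\[
p\,\Bbbk\mathcal{C}\,p=p\,(\id_{e}\,\Bbbk\mathcal{C}\,\id_{e})\,p=p\,\Bbbk G_{e}\,p,
\]
using the identity $\id_{e}\,\Bbbk\mathcal{C}\,\id_{e}=\Bbbk G_{e}$ established above. Because $p$ is primitive in $\Bbbk G_{e}$, the corner $p\,\Bbbk G_{e}\,p$ has only the idempotents $0$ and $p$; hence so does $p\,\Bbbk\mathcal{C}\,p$, and $p$ is primitive in $\Bbbk\mathcal{C}$. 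Combining the four points shows that $\bigcup_{e}P_{e}$ is a complete set of primitive orthogonal idempotents for $\Bbbk\mathcal{C}$.

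The one place genuinely requiring the EI-hypothesis is the identification $\mathcal{C}(e,e)=G_{e}$; it is exactly this that makes the corner a group algebra rather than a monoid algebra and lets the primitivity of $p$ pass from $\Bbbk G_{e}$ to $\Bbbk\mathcal{C}$. Accordingly, I expect the main point to watch to be the primitivity transfer, and in particular the justification that $p\,\Bbbk\mathcal{C}\,p=p\,\Bbbk G_{e}\,p$ follows purely from $p=\id_{e}\,p\,\id_{e}$ and the corner identity, with no further hypotheses on $\mathcal{C}$ needed. The remaining verifications (idempotency, orthogonality across objects, and summation to the unit) are routine consequences of the block structure of $\Bbbk\mathcal{C}$.
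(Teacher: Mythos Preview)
Your argument is correct: the key computation $p\,\Bbbk\mathcal{C}\,p=p\,(\id_{e}\,\Bbbk\mathcal{C}\,\id_{e})\,p=p\,\Bbbk G_{e}\,p$ is exactly what transfers primitivity, and the remaining checks are routine. The paper does not supply its own proof of this lemma but simply cites \cite[Lemma~9.31]{Luck1989} and \cite[Corollary~4.5]{Webb2007}; what you have written is essentially the standard argument one finds in those references.
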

From now on we assume that the order of every (maximal) subgroup of
$S$ is invertible in $\Bbbk$. This implies that $\Bbbk G_{e}$ is
semisimple for every $e\in E$ by Maschke's theorem. 
\begin{thm}
\label{thm:ProjectiveModulesDescription}Let $S$ be a finite right
restriction EI-Ehresmann semigroup. Let $e\in E$ and $G_{e}=\mathcal{C}(e,e)$
be as above and let $V\in\Irr\Bbbk G_{e}$. Then 
\[
\Bbbk\Lt(e)\bigotimes_{\mathbb{\Bbbk}G_{e}}V
\]
is an indecomposable projective module of $\Bbbk S$. Moreover, every
indecomposable projective module of $\Bbbk S$ is isomorphic to a
module of this form for an appropriate choice of a projection $e\in E$
and a simple $G_{e}$-module $V$. 
\end{thm}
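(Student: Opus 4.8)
The plan is to translate the statement into the language of the category algebra $\Bbbk\mathcal{C}$ via the isomorphism $\varphi$ of \thmref{MainIso}, where $\mathcal{C} = {\bf C}(S)$, and then invoke the EI-category machinery of \lemref{EquivalenceOfPrimitiveIdempotents}. Since $S$ is right restriction EI-Ehresmann, $\mathcal{C}$ is a finite EI-category, so its automorphism groups are exactly the maximal subgroups $G_e = \Hc(e) = \Ht(e)$ (the last equality holding because $S$ is EI-Ehresmann), and these are group-algebra-semisimple by the standing hypothesis that $|G_e|$ is invertible in $\Bbbk$. The first step is therefore to fix, for each object $e$ of $\mathcal{C}$, a complete set $P_e$ of primitive orthogonal idempotents of $\Bbbk G_e$; by \lemref{EquivalenceOfPrimitiveIdempotents}, $\bigcup_e P_e$ is a complete set of primitive orthogonal idempotents for $\Bbbk\mathcal{C}$. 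Hence the modules $\Bbbk\mathcal{C}\cdot C(p)$ for $p$ ranging over $\bigcup_e P_e$ form a complete irredundant list of the indecomposable projectives of $\Bbbk\mathcal{C}$, and via $\varphi$ the same is true for $\Bbbk S$.

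Next I would assemble the identification already proved in the excerpt. By \corref{Proj_Modules_Form} together with the chain of isomorphisms displayed right after it, for a primitive idempotent $p \in \Bbbk G_e$ we have
\[
\mathbb{\Bbbk}\mathcal{C}\,C(p) \;\simeq\; \Bbbk\Lt(e)\bigotimes_{\Bbbk G_e}\Bbbk G_e p .
\]
Because $\Bbbk G_e$ is semisimple, the simple $\Bbbk G_e$-modules are exactly the modules $\Bbbk G_e p$ for $p \in P_e$, and as $p$ ranges over $P_e$ these run (with multiplicity, one copy of each $V \in \Irr\Bbbk G_e$ for each time it appears in a decomposition of $\Bbbk G_e$) through all of $\Irr\Bbbk G_e$. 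Thus every module $\Bbbk\Lt(e)\otimes_{\Bbbk G_e} V$ with $V \in \Irr\Bbbk G_e$ is isomorphic to some $\Bbbk\mathcal{C}\,C(p)$, which is projective and indecomposable; and conversely every indecomposable projective of $\Bbbk S \simeq \Bbbk\mathcal{C}$ arises this way. This gives both halves of the theorem. One should also remark that the correspondence is bijective on isomorphism classes: $\Bbbk\Lt(e)\otimes_{\Bbbk G_e}V \simeq \Bbbk\Lt(f)\otimes_{\Bbbk G_f}W$ forces $e = f$ and $V \simeq W$, which follows from the fact that the top of $\Bbbk\mathcal{C}\,C(p)$ is the corresponding simple of \thmref{Simple_modules_Thm}, indexed by the $\Jc$-class of $e$ (equivalently, by $e$ up to the appropriate equivalence) and $V$ — but since the theorem as stated only asserts ``every indecomposable projective is of this form,'' the irredundancy remark is optional.

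The main obstacle is not any single hard computation — the structural input (\thmref{MainIso}, \corref{Proj_Modules_Form}, \lemref{EquivalenceOfPrimitiveIdempotents}) does the heavy lifting — but rather making sure the EI hypothesis is genuinely used at the right point. Specifically, \corref{Proj_Modules_Form} and the idempotent-transport argument are valid for any Ehresmann right restriction semigroup, but there $\Bbbk\mathcal{C}\,C(p)$ need not be indecomposable, because $\mathcal{C}$ need not be an EI-category and \lemref{EquivalenceOfPrimitiveIdempotents} does not apply: a primitive idempotent of $\Bbbk G_e$ can fail to be primitive in $\Bbbk\mathcal{C}$ (the endomorphism monoid $\mathcal{C}(e,e)$ might be a non-group monoid whose algebra has primitive idempotents not coming from a maximal subgroup). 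So the one place where right restriction EI-Ehresmann is essential is precisely the invocation of \lemref{EquivalenceOfPrimitiveIdempotents}, which requires that $\mathcal{C} = {\bf C}(S)$ be EI; this is guaranteed by the definition of EI-Ehresmann together with the fact (noted after the definition in \secref{Classes_of_EU_Ehresmann}) that $C(a)$ is an endomorphism of $e$ iff $a \in \Ht(e)$, so ${\bf C}(S)$ is EI iff every $\Ht$-class of a projection is a group. I would state this dependence explicitly in the proof so the role of the hypothesis is transparent.
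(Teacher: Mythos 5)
Your proposal is correct and follows essentially the same route as the paper: the paper's proof is exactly the combination of Corollary \ref{cor:Proj_Modules_Form} with Lemma \ref{lem:EquivalenceOfPrimitiveIdempotents}, noting that semisimplicity of $\Bbbk G_e$ lets one write $V\simeq\Bbbk G_e p$ for a primitive idempotent $p$ that remains primitive in $\Bbbk\mathcal{C}$. Your additional remarks on irredundancy and on where the EI hypothesis enters are accurate but correspond to material the paper defers to the discussion following the theorem.
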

\begin{proof}
This is immediate from \corref{Proj_Modules_Form} and the fact that
if $\mathbb{\Bbbk}G_{e}$ is semisimple then $V\simeq\mathbb{\Bbbk}G_{e}p$
where $p$ is a primitive idempotent of $\mathbb{\Bbbk}G_{e}$ which
is also a primitive idempotent of $\mathbb{\Bbbk}\mathcal{C}$ by
\lemref{EquivalenceOfPrimitiveIdempotents}. 
\end{proof}
\begin{rem}
A similar result for indecomposable projective modules of a certain
type of right Fountain monoids is obtained in \cite[Theorem 4.7]{Margolis2018B}. 
\end{rem}

\paragraph{Isomorphic copies of indecomposable projective modules}

The set 
\[
\{\Bbbk\Lt(e){\displaystyle \bigotimes_{\mathbb{\Bbbk}G_{e}}}V\mid e\in E,\quad V\in\Irr G_{e}\}
\]
usually contains isomorphic modules. We want to obtain a list of all
indecomposable projective modules up to isomorphism but without isomorphic
copies. From the categorical point of view we can say that 
\[
\Bbbk\Lt(e_{1}){\displaystyle \bigotimes_{\mathbb{\Bbbk}G_{e_{1}}}}V_{1}\simeq\Bbbk\Lt(e_{2}){\displaystyle \bigotimes_{\mathbb{\Bbbk}G_{e_{2}}}}V_{2}
\]
if and only if $e_{1}$ is isomorphic to $e_{2}$ (as objects in a
category) and $V_{1}\simeq V_{2}$ as $G_{e_{1}}$-modules (note that
$G_{e_{1}}\simeq G_{e_{2}}$) - see \cite[Corollary 4.2 and Proposition 4.3]{Webb2007}.
From the semigroup point of view we can say even more. We start with
two lemmas that will make the situation more transparent. 
\begin{lem}[{{\cite[Corollary 5.5]{Stein2017}}}]
\label{lem:Isomorphic_Objects_in_Ehresmann_Cat} Let $S$ be a finite
Ehresmann semigroup and let $e,f\in E$ be two projections. Then $e$
and $f$ are isomorphic (as objects in the corresponding Ehresmann
category) if and only if $e\Jc f$ in $S$. 
\end{lem}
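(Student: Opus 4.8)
The plan is to prove \lemref{Isomorphic_Objects_in_Ehresmann_Cat} by showing each direction separately, working directly with the definitions of isomorphism in the Ehresmann category ${\bf C}(S)$ and of Green's $\Jc$-relation. Recall that with the right-to-left convention, a morphism $C(a)$ has domain $a^{\ast}$ and range $a^{+}$, and composition $C(b)\cdot C(a) = C(ba)$ is defined exactly when $a^{+}=b^{\ast}$.

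For the direction ``isomorphic $\implies$ $\Jc$-related'', suppose $C(a):e\to f$ is an isomorphism with inverse $C(b):f\to e$, so $a^{\ast}=e$, $a^{+}=f$, $b^{\ast}=f$, $b^{+}=e$, and $C(a)\cdot C(b)=C(ab)=C(f)$, $C(b)\cdot C(a)=C(ba)=C(e)$. Thus $ab=f$ and $ba=e$ in $S$. From $ba=e$ we get $e\in S^1 a$, so $S^1 e\subseteq S^1 a$; since $a=ae=a\cdot ba\in S^1 a$ trivially but more to the point $a = a e$ gives $S^1 a \subseteq S^1 e$ once we observe $a\Lt e$ already gives $a = ae$, hence $S^1 a = S^1 e$ is not automatic — instead I will argue $e\Jc f$ via $e = ba \in S^1 a S^1$ and $f = ab \in S^1 a S^1$, while $a \in S^1 e = S^1 a^\ast$ and $a = fa \in S^1 f$, so $a\in S^1 e S^1 \cap S^1 f S^1$; chaining the two-sided ideal inclusions $S^1 e S^1 \subseteq S^1 a S^1 \subseteq S^1 f S^1$ (from $e = ba$, $f = ab$... wait, $f = ab$ gives $S^1 f S^1 \subseteq S^1 a S^1$) yields $S^1 e S^1 = S^1 a S^1 = S^1 f S^1$, i.e. $e\Jc a\Jc f$. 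So this direction is a routine manipulation of principal ideals.

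For the converse ``$e\Jc f$ $\implies$ isomorphic'', I would use that $S$ is finite, so $e\Jc f$ means $e$ and $f$ lie in the same regular $\Jc$-class $J$ (idempotents are regular). Standard finite semigroup theory (Green's lemma / the structure of $\Dc=\Jc$-classes) gives $e\Dc f$, hence there is an element $a$ with $e\Rc a\Lc f$ and an element $a'$ with $aa' = e$, $a'a = f$ realizing the $\Rc$- and $\Lc$-relations between $e$ and $f$; concretely one can take $a\in R_e\cap L_f$ and its ``inverse'' $a'\in R_f\cap L_e$ with $aa'=e$, $a'a=f$. The issue is translating $\Rc,\Lc$ into $^{+},^{\ast}$: here I invoke that for a \emph{regular} element of a right restriction EI-Ehresmann semigroup, \lemref{right_restriction_EU_Ehresmann_characterization} gives $a\Rc a^{+}$, and dually (since $\Lc\subseteq\Lt$ and each $\Lt$-class, indeed each $\Lc$-class in a regular $\Jc$-class, contains a unique idempotent which must be $a^\ast$) we get $a\Lc a^{\ast}$. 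Hence $a^{+}=e$ and $a^{\ast}=f$ after possibly adjusting representatives, so $C(a):f\to e$ is a morphism; similarly $a'$ gives $C(a'):e\to f$, and $aa'=e$, $a'a=f$ translate to $C(a)\cdot C(a')=C(e)$ and $C(a')\cdot C(a)=C(f)$, exhibiting mutually inverse morphisms. Thus $e\cong f$ in ${\bf C}(S)$.

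The main obstacle I anticipate is the bookkeeping in the converse direction: making sure that the element realizing $e\Dc f$ can be chosen so that its $^{+}$ and $^{\ast}$ are \emph{exactly} $e$ and $f$ (not merely $\Hc$- or $\Lc$-equivalent to them), and that the two elements $a,a'$ chosen for the two sides of the $\Dc$-relation are genuine inverses giving the idempotents $e$ and $f$ on the nose. This is where regularity of $J$ together with the uniqueness of idempotents in $\Lc$-classes (\lemref{Only_Idempotent_In_L_class}) and the relation $a\Rc a^+$ for regular $a$ (\lemref{right_restriction_EU_Ehresmann_characterization}) do the real work; everything else is formal. Since the statement is quoted from \cite[Corollary 5.5]{Stein2017}, I would also simply cite that reference and keep the reproduced argument brief.
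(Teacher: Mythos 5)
The paper itself gives no proof of this lemma---it is quoted from \cite[Corollary 5.5]{Stein2017}---so there is no in-paper argument to compare against; I can only judge your proposal on its own terms. Your forward direction is correct and fully general: from $ab=f$, $ba=e$, $a=ae$ and $a=fa$ you get $S^{1}eS^{1}=S^{1}aS^{1}=S^{1}fS^{1}$, using nothing beyond the definition of the Ehresmann category (the rambling ``wait'' passage should be cleaned up, but the ideal inclusions you land on are the right ones).

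The converse direction has a genuine gap, namely a mismatch of hypotheses. The lemma is asserted for an \emph{arbitrary} finite Ehresmann semigroup, but your argument that the connecting element satisfies $a^{+}=e$ and $a^{\ast}=f$ routes through \lemref{right_restriction_EU_Ehresmann_characterization} and \lemref{Only_Idempotent_In_L_class}, both of which assume $S$ is right restriction and EI-Ehresmann. Neither assumption is available in the statement you are proving, so as written you only establish the lemma for that subclass. The detour is also unnecessary: since $\Rc\subseteq\Rt$ and every $\Rt$-class of an Ehresmann semigroup contains a unique projection, $a\Rc e$ with $e\in E$ already forces $a^{+}=e$; dually $\Lc\subseteq\Lt$ and $a\Lc f$ force $a^{\ast}=f$, and likewise $(a')^{+}=f$, $(a')^{\ast}=e$. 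With that substitution, the standard fact that two $\Dc$-equivalent idempotents $e,f$ admit $a\in R_{e}\cap L_{f}$ and an inverse $a'\in R_{f}\cap L_{e}$ with $aa'=e$ and $a'a=f$, together with $\Jc=\Dc$ in a finite semigroup, finishes the argument exactly as you intend, and in the full generality of the statement.
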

\begin{lem}
\label{lem:EI_Ehresmann_Implies_Projection_in_every_J_class}Let $S$
be a finite EI-Ehresmann semigroup. Then in every regular $\Jc$-class
of $S$ there is a projection $e\in E$. 
\end{lem}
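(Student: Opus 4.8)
The plan is to start from a regular $\Jc$-class $J$ of $S$ and produce a projection of $E$ inside it. Since $J$ is regular, it contains an idempotent $f \in E(S)$. This $f$ need not lie in $E$, but it has its associated projections $f^{+}$ and $f^{\ast}$ coming from the Ehresmann structure, and these do lie in $E$. The key point I would exploit is that the Ehresmann category $\mathbf{C}(S)$ is an EI-category, so by \lemref{EU_Ehresmann_characterization} the idempotent $f$ satisfies $f^{\ast}f f^{+} = f^{\ast}f^{+}$, and moreover $f^{\ast}f^{+}$ is an inverse of $f$. The natural candidate for the projection in $J$ is therefore $f^{\ast}f^{+}$ (or, depending on which $\Jc$-relations one can verify, $f^{+}$ or $f^{\ast}$ individually).

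The first step is to show $f^{\ast}f^{+} \Jc f$, which would immediately place a projection in $J$. Since $f^{\ast}f^{+}$ is an inverse of $f$ (part (3) of \lemref{EU_Ehresmann_characterization}), we have $f (f^{\ast}f^{+}) f = f$ and $(f^{\ast}f^{+}) f (f^{\ast}f^{+}) = f^{\ast}f^{+}$, so each of $f$ and $f^{\ast}f^{+}$ lies in the principal two-sided ideal generated by the other; hence $f \Jc f^{\ast}f^{+}$. This is the whole content, provided $f^{\ast}f^{+}$ is genuinely a projection of $E$ — which it is, since $E$ is a subsemilattice and $f^{\ast}, f^{+} \in E$, so their product is again in $E$. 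Thus $e := f^{\ast}f^{+} \in E$ and $e \Jc f$, so $e$ lies in $J$.

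The second step is just to assemble these observations cleanly: given a regular $\Jc$-class $J$, pick any idempotent $f \in J$ (which exists by regularity and finiteness, since regular $\Jc$-classes contain idempotents), form $e = f^{\ast}f^{+} \in E$, and invoke the inverse relation above to conclude $e \Jc f$, hence $e \in J \cap E$.

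I do not expect a serious obstacle here; the only point requiring a little care is the bookkeeping distinction between the full idempotent set $E(S)$ and the chosen subsemilattice $E$ of projections — one must remember that $f$ comes from $E(S)$ while $f^{+}, f^{\ast}$, and hence $f^{\ast}f^{+}$, come from $E$. If one prefers a proof avoiding sandwich sets, an alternative is to use \lemref{right_restriction_EU_Ehresmann_characterization}(3): in the right restriction case every regular $a$ satisfies $a \Rc a^{+}$, so taking $a = f$ gives $f \Rc f^{+}$; then $f^{+} \in E$ already lies in the same $\Rc$-class (hence the same $\Jc$-class) as $f$, so $f^{+} \in J \cap E$ directly. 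Either route closes the lemma in a couple of lines.
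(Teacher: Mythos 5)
Your proposal is correct and follows exactly the paper's argument: pick an idempotent $f$ in the regular $\Jc$-class, invoke \lemref{EU_Ehresmann_characterization} to see that the projection $f^{\ast}f^{+}\in E$ is an inverse of $f$, and conclude $f\Jc f^{\ast}f^{+}$. (Only your parenthetical alternative via \lemref{right_restriction_EU_Ehresmann_characterization} would not work here, since that lemma assumes right restriction, which this statement does not.)
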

\begin{proof}
Let $J$ be a regular $\Jc$-class and choose an idempotent $f\in J$.
\lemref{EU_Ehresmann_characterization} says that $f^{\ast}f^{+}$
is an inverse of $f$. Therefore $f\Jc f^{\ast}f^{+}$ so $e=f^{\ast}f^{+}$
is a projection in $J$. 
\end{proof}
\begin{rem}
The converse of \lemref{EI_Ehresmann_Implies_Projection_in_every_J_class}
is not true. Recall that $B_{2}$ is the monoid of all binary relations
on the set $\{1,2\}$, i.e., subsets of $\{1,2\}^{2}$. We have already
mentioned (\exaref{Kinyon_Counterexample}) that this is an Ehresmann
semigroup with respect to the semilattice of partial identities -
$\alpha^{+}=1_{\im(a)}$ and $\alpha^{\ast}=1_{\dom(a)}$ for every
$\alpha\in B_{2}$. Consider the submonoid $M\subseteq B_{2}$ of
the $12$ relations whose sizes are $0,1,2$ or $4$ (as subsets of
$\{1,2\}^{2}).$ It is a routine matter to verify that this is indeed
a submonoid of $B_{2}$. Since $E\subseteq M$ it is clear that $M$
is also an Ehresmann semigroup. It is also routine to check that $M$
has three $\Jc$-classes: One of the two invertible elements, one
with the zero element and another with the other $9$ elements. Therefore,
every $\Jc$-class is regular and contains a projection. On the other
hand, choose the idempotent $f=\{(1,1),(2,2),(1,2),(2,1)\}\in M$.
Since $\dom(f)=\im(f)=\{1,2\}$ it is clear that the morphism $C(f)$
is a non-invertible endomorphism of the object $\id\in E$ in the
corresponding Ehresmann category. Therefore, the category is not an
EI-category and $M$ is not EI-Ehresmann. 
\end{rem}
According to \lemref{Isomorphic_Objects_in_Ehresmann_Cat} and \lemref{EI_Ehresmann_Implies_Projection_in_every_J_class}
choosing one object from every isomorphism class of objects is precisely
like choosing one projection from every regular $\Jc$-class. So we
can fix a set of representative projections $I=\{e_{1},\ldots,e_{k}\}$,
one from every regular $\Jc$-class whose corresponding maximal groups
are $G_{J_{1}},\ldots,G_{J_{k}}$. Now, 
\[
\left\{ \Bbbk\Lt(e_{i}){\displaystyle \bigotimes_{\mathbb{\Bbbk}G_{J_{i}}}}V\mid i=1,\ldots,k\quad V\in\Irr\Bbbk G_{J_{i}}\right\} 
\]
is a list of all the indecomposable projective modules (up to isomorphism)
without isomorphic copies. This is an explicit correspondence between
indecomposable projective modules of $S$ and pairs $(J,V)$ where
$J$ is a regular $\Jc$-class and $\text{\mbox{\ensuremath{V\in\Irr\Bbbk G_{J}}}}$.
From this point of view, the correspondence between a simple module
and its projective cover is clear. It is easy to see that for every
representative projection $e\in I$ there is an $\Bbbk S$-module
epimorphism 
\[
\psi:\Bbbk\Lt(e)\to\Bbbk\Lc(e)
\]
defined on basis elements by 
\[
\psi(m)=\begin{cases}
m & m\in\Lc(e)\\
0 & m\notin\Lc(e).
\end{cases}
\]
This induces an $\Bbbk S$-module epimorphism 
\[
\psi\otimes\id:\Bbbk\Lt(e){\displaystyle \bigotimes_{\mathbb{\Bbbk}G_{J}}}V\to\Bbbk\Lc(e){\displaystyle \bigotimes_{\mathbb{\Bbbk}G_{J}}}V
\]

for every $V\in\Irr\Bbbk G_{J}$ (where $J$ is the regular $\Jc$-class
of $e$). This is an explicit description of the epimorphism from
an indecomposable projective module and its simple image.

\paragraph{Dimension of indecomposable projective modules}

Next we want to describe the dimension of the projective modules.
Recall that we assume that the order of every group being discussed
is invertible in $\Bbbk$ hence its algebra is semisimple and here
we also assume that $\Bbbk$ is algebraically closed. Recall that
if $V$ is a $\Bbbk G$-module then the dual $D(V)=\Hom_{\Bbbk}(V,\mathbb{\Bbbk})$
is a right $\Bbbk G$-module. We will use a well-known fact whose
proof we will briefly sketch. 
\begin{prop}
\label{prop:Dimension_Of_Module_Of_a_Group}Let $G$ be a finite group,
let $M$ be a right $\Bbbk G$-module and let $V\in\Irr\mathbb{\Bbbk}G$.
The dimension of ${\displaystyle M\bigotimes_{\Bbbk G}V}$ (as a $\mathbb{\Bbbk}$-vector
space) is the multiplicity of $D(V)$ in the decomposition of $M$
into simple (right) $\mathbb{\Bbbk}G$-modules. 
\end{prop}
\begin{proof}
Let $V_{1},\ldots,V_{r}$ be the simple $\mathbb{\Bbbk}G$-modules
up to isomorphism. Fix a set $\{p_{1},\ldots,p_{r}\}$ of primitive
orthogonal idempotents such that $V_{i}\simeq\mathbb{\Bbbk}Gp_{i}$.
It is easy to check that 
\[
p_{i}\mathbb{\Bbbk}G\bigotimes_{\mathbb{\Bbbk}G}\mathbb{\Bbbk}Gp_{j}=p_{i}\mathbb{\Bbbk}Gp_{j}
\]
as $\Bbbk$-vector spaces. Moreover, since $\Bbbk$ is algebraically
closed, the Wedderburn-Artin theorem implies that 
\[
p_{i}\mathbb{\Bbbk}Gp_{j}\simeq\begin{cases}
\mathbb{\Bbbk} & i=j\\
0 & \text{otherwise}
\end{cases}
\]
hence, 
\[
\dim p_{i}\mathbb{\Bbbk}Gp_{j}=\begin{cases}
1 & i=j\\
0 & \text{otherwise}
\end{cases}
\]
(this is actually Schur's Lemma). Now, consider the decomposition
of $M$ into a sum of simple right $\mathbb{\Bbbk}G$-modules 
\[
M\simeq\bigoplus_{l=1}^{r}n_{l}p_{l}\mathbb{\Bbbk}G
\]
and let $V\simeq\mathbb{\Bbbk}Gp_{i}$ for some $i$. It is clear
that 
\[
\dim M\bigotimes_{\mathbb{C}G}\mathbb{\Bbbk}Gp_{i}=\dim\bigoplus_{l=1}^{r}n_{l}p_{l}\mathbb{\Bbbk}Gp_{i}=n_{i}.
\]
So the dimension is indeed the multiplicity of $D(V)\simeq p_{i}\mathbb{\Bbbk}G$
in the decomposition of $M$ as required. 
\end{proof}
As an immediate corollary of \thmref{ProjectiveModulesDescription}
and \propref{Dimension_Of_Module_Of_a_Group}, we obtain the following
result. 
\begin{cor}
\label{cor:DimensionProjective}Let $S$ be a finite right restriction
EI-Ehresmann semigroup. Let $J$ be a regular $\Jc$-class and choose
a projection $e\in J$ (such exists by \lemref{EI_Ehresmann_Implies_Projection_in_every_J_class}).
Let $V\in\Irr\mathbb{\Bbbk}G_{J}$ where $G_{J}$ is the maximal subgroup
associated with $J$. Assume also that $\Bbbk$ is algebraically closed
and the order of every subgroup of $S$ is invertible in $\Bbbk$.
Then the dimension of $\mbox{\ensuremath{{\displaystyle \mathbb{\Bbbk}\tilde{L}_{E}(e)\bigotimes_{\mathbb{C}G_{J}}V}}}$
as a $\Bbbk$-vector space is the multiplicity of $D(V)$ in the decomposition
of $\mathbb{\Bbbk}\Lt(e$) as a right $G_{J}$-module. 
\end{cor}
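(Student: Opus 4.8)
The plan is to combine the two ingredients the corollary explicitly names: \thmref{ProjectiveModulesDescription}, which tells us that $\mathbb{\Bbbk}\Lt(e)\bigotimes_{\Bbbk G_J}V$ is indeed an (indecomposable projective) $\Bbbk S$-module, and \propref{Dimension_Of_Module_Of_a_Group}, which computes the $\Bbbk$-dimension of a tensor product $M\bigotimes_{\Bbbk G}V$ over a group algebra. The only gap between these two statements and the claimed conclusion is that \propref{Dimension_Of_Module_Of_a_Group} is phrased for an arbitrary \emph{right} $\Bbbk G$-module $M$, whereas here we want to apply it with $M=\mathbb{\Bbbk}\Lt(e)$ and $G=G_J$.

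So the first step is to recall (from \secref{Construction_of_Lt_modules}, specifically \lemref{G_Acts_On_The_right_Where_right_cong} applied to the Ehresmann semigroup $S$, where $\Lt$ is a right congruence) that $G_e$ acts on the right of $\Lt(e)$ by right multiplication, so that $\mathbb{\Bbbk}\Lt(e)$ is a genuine right $\Bbbk G_e$-module; since $e\in J$ and $G_e\simeq G_J$, this is a right $\Bbbk G_J$-module. The second step is simply to invoke \propref{Dimension_Of_Module_Of_a_Group} with $M=\mathbb{\Bbbk}\Lt(e)$: its hypotheses are that $G$ is a finite group (here $G_J$, finite because $S$ is finite), that $M$ is a right $\Bbbk G$-module (just verified), and that $V\in\Irr\Bbbk G$ — and the proof of that proposition used that $\Bbbk$ is algebraically closed and $|G|$ invertible in $\Bbbk$ (Schur's lemma / Wedderburn--Artin), both of which are among the standing hypotheses of the corollary. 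The conclusion of the proposition is exactly that $\dim_\Bbbk\big(\mathbb{\Bbbk}\Lt(e)\bigotimes_{\Bbbk G_J}V\big)$ equals the multiplicity of $D(V)$ in the decomposition of $\mathbb{\Bbbk}\Lt(e)$ into simple right $\Bbbk G_J$-modules.

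Finally, \thmref{ProjectiveModulesDescription} guarantees that the left-hand side $\mathbb{\Bbbk}\Lt(e)\bigotimes_{\Bbbk G_J}V$ is actually the indecomposable projective $\Bbbk S$-module we are interested in (and that it does not depend on the choice of $e\in J$, so the statement is well-posed), and \lemref{EI_Ehresmann_Implies_Projection_in_every_J_class} ensures that a projection $e\in J$ exists to begin with. Putting these three citations together yields the corollary with essentially no new computation. I do not expect any genuine obstacle here — the corollary is a bookkeeping combination of results already in place; the only thing one must be a little careful about is making explicit that $\mathbb{\Bbbk}\Lt(e)$ carries the right $\Bbbk G_J$-module structure needed to feed it into \propref{Dimension_Of_Module_Of_a_Group}, which is why I would open the proof by recalling that point.
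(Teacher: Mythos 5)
Your proposal matches the paper exactly: the corollary is obtained there as an immediate consequence of \thmref{ProjectiveModulesDescription} and \propref{Dimension_Of_Module_Of_a_Group}, which is precisely the combination you describe. Your additional remark that the right $\Bbbk G_J$-module structure on $\Bbbk\Lt(e)$ comes from \lemref{G_Acts_On_The_right_Where_right_cong} is a correct and helpful piece of bookkeeping that the paper leaves implicit.
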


\section{\label{sec:PartialFunctions}The monoid of partial functions}

Recall that $\PT_{n}$ denotes the monoid of all partial functions
on the set $\{1,\ldots,n\}$. In this section we apply the results
of \secref{ProjectiveModules} in this specific case. For this section
we fix $\Bbbk=\mathbb{C}$, the field of complex numbers. Let $A\subseteq\{1,\ldots,n\}$
and denote by $\id_{A}$ the partial identity function on $A$. As
already mentioned in \subsecref{Right_restriction_EU_Ehresmann},
$\PT_{n}$ is a right restriction EI-Ehresmann semigroup with 
\[
\text{\mbox{\ensuremath{E=\{\id_{A}\mid A\subseteq\{1,\ldots,n\}\}}}}
\]
as a subsemilattice of projections. Recall that we are composing functions
from right to left. We denote the corresponding Ehresmann category
by $E_{n}$, and it can be described in the following way. The objects
of $E_{n}$ are subsets of $\{1,\ldots,n\}$ and for every $A,B\subseteq\{1,\ldots,n\}$
the hom-set $E_{n}(A,B)$ contains all the (total) onto functions
from $A$ to $B$. The endomorphism monoid of an object $A$ is the
symmetric group $S_{A}$ so the category $E_{n}$ is indeed an EI-category.
Let $f:A\to B$ be a partial function. We denote by $\dom(f)\subseteq A$
and $\im(f)\subseteq B$ the domain and image of $f$. Recall that
the (set theoretic) kernel of $f$ is the equivalence relation on
$\dom f$ defined by $a_{1}\sim a_{2}$ if $f(a_{1})=f(a_{2}).$ We
denote it by $\ker f$. Consider two partial functions $f_{1},f_{2}\in\PT_{n}$.
It is easy to see that $f_{1}\Lt f_{2}$ if and only if $\dom(f_{1})=\dom(f_{2})$.
It is well known that $f_{1}$ and $f_{2}$ are $\Jc$-equivalent
if $|\im(f_{1})|=|\im(f_{2})|$ and $\Lc$-equivalent if $\dom(f_{1})=\dom(f_{2})$
and $\ker f_{1}=\ker f_{2}$. All the $\Jc$-classes of $\PT_{n}$
are regular and we choose a representative projection $\id_{k}=\id_{\{1,\ldots,k\}}\in E$
for every $\Jc$-class. The group corresponding to the $\Jc$-class
of $\id_{k}$ is the symmetric group $S_{k}$. Therefore, the maximal
subgroups of $\PT_{k}$ are $S_{k}$ for $0\leq k\leq n$ (note that
$S_{0}\simeq S_{1}$). It is well known that simple modules of $\mathbb{C}S_{k}$
are indexed by partitions of $k$ (or Young diagrams). Therefore,
the simple modules of $\mathbb{\mathbb{C}}\PT_{n}$ can be indexed
by partitions $\lambda\vdash k$ for $0\leq k\leq n$. We denote the
simple module of $S_{k}$ corresponding to a partition $\lambda\vdash k$
by $S^{\lambda}$. According to \thmref{Simple_modules_Thm} all simple
modules of $\mathbb{C}\PT_{n}$ are of the form ${\displaystyle \mathbb{\mathbb{C}}\Lc(\id_{k})\bigotimes_{\mathbb{C}S_{k}}S^{\lambda}}$
for $0\leq k\leq n$ and $\lambda\vdash k$ (this is a known fact,
see \cite[Section 11.3]{Ganyushkin2009b}, and this is also a corollary
of \cite[Theorem 4.4]{Margolis2011}). \thmref{ProjectiveModulesDescription}
yields the following result. 
\begin{thm}
The modules ${\displaystyle \mathbb{\mathbb{C}}\Lt(\id_{k})\bigotimes_{\mathbb{C}S_{k}}S^{\lambda}}$
for $0\leq k\leq n$ and $\lambda\vdash k$ form a list of all the
indecomposable projective modules of $\mathbb{C}\PT_{n}$ up to isomorphism.
(Note that $\Lt(\id_{k})$ consists of all the partial functions $f\in\PT_{n}$
with $\dom(f)=\{1,\ldots,k\}$). 
\end{thm}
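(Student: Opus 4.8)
The plan is to obtain this as a direct specialization of \thmref{ProjectiveModulesDescription} to the case $S = \PT_n$ and $\Bbbk = \mathbb{C}$. First I would verify the hypotheses: $\PT_n$ is a finite right restriction EI-Ehresmann semigroup (already established in \subsecref{Right_restriction_EU_Ehresmann}), and over $\Bbbk = \mathbb{C}$ the order of every maximal subgroup $S_k$ is invertible, so the standing assumption of \secref{ProjectiveModules} is satisfied. Hence \thmref{ProjectiveModulesDescription} applies: every indecomposable projective $\mathbb{C}\PT_n$-module is isomorphic to $\mathbb{C}\Lt(e) \bigotimes_{\mathbb{C}G_e} V$ for a projection $e \in E$ and a simple $\mathbb{C}G_e$-module $V$, and conversely every such module is indecomposable projective.

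Next I would invoke the reduction to representative projections discussed right after \thmref{ProjectiveModulesDescription}: by \lemref{Isomorphic_Objects_in_Ehresmann_Cat} and \lemref{EI_Ehresmann_Implies_Projection_in_every_J_class}, choosing one object from each isomorphism class of the Ehresmann category is the same as choosing one projection from each regular $\Jc$-class, and every $\Jc$-class of $\PT_n$ is regular. For $\PT_n$ the $\Jc$-classes are indexed by the rank $k$ with $0 \leq k \leq n$, and $\id_k = \id_{\{1,\ldots,k\}}$ is a projection in the $\Jc$-class of rank $k$ with maximal subgroup $G_{\id_k} = S_k$. So the list without isomorphic copies is $\{\,\mathbb{C}\Lt(\id_k) \bigotimes_{\mathbb{C}S_k} V \mid 0 \leq k \leq n,\ V \in \Irr \mathbb{C}S_k\,\}$. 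Finally, since $\mathbb{C}$ is a splitting field for each symmetric group, $\Irr \mathbb{C}S_k$ is exactly $\{S^\lambda \mid \lambda \vdash k\}$, which gives the stated parametrization.

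For the parenthetical remark that $\Lt(\id_k)$ consists of all $f \in \PT_n$ with $\dom(f) = \{1,\ldots,k\}$, I would recall the description of $\Lt$ in $\PT_n$ given at the start of \secref{PartialFunctions}: $f_1 \Lt f_2$ iff $\dom(f_1) = \dom(f_2)$. Since $\id_k^\ast = \id_{\{1,\ldots,k\}}$ and $\dom(\id_k) = \{1,\ldots,k\}$, this identifies $\Lt(\id_k)$ as claimed.

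I expect essentially no obstacle here: the statement is a corollary, and the only things to check are the bookkeeping that the $\Jc$-classes, maximal subgroups, and $\Lt$-classes of $\PT_n$ are as described, all of which are standard or recalled in the text. The one point requiring a sentence of care is that $\mathbb{C}$ splits $S_k$ so that the simple modules are genuinely the Specht modules $S^\lambda$ over $\mathbb{C}$ (not just after extension of scalars), which is classical.
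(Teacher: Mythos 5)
Your proposal is correct and matches the paper's own (very brief) derivation: the paper likewise obtains this as an immediate specialization of \thmref{ProjectiveModulesDescription}, using the representative-projection discussion following that theorem together with the standard facts about the $\Jc$-classes, maximal subgroups $S_k$, and the domain-characterization of $\Lt$ in $\PT_n$ recalled at the start of \secref{PartialFunctions}. No discrepancies to report.
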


\paragraph{Dimension of indecomposable projective modules}

Our goal now is to obtain a formula for the dimension (as a $\mathbb{C}$-vector
space) of the projective module $\mathbb{C}\Lt(\id_{k})\otimes S^{\lambda}$
where $\lambda\vdash k$. By \corref{DimensionProjective} we need
to consider the multiplicity of $D(S^{\lambda})$ in the right $\mathbb{C}S_{k}$
module $\mathbb{C}\Lt(\id_{k})$. This is a permutation module, induced
by the (right) action of $S_{k}$ on $\Lt(\id_{k})$. We start by
examining this action. Let $f:\{1,\ldots,k\}\to\{b_{1},\ldots,b_{r}\}$
be an onto function. We associate with $f$ a composition $\mu_{f}=[\mu_{1},\ldots,\mu_{r}]\vDash k$
where $\mu_{i}$ is the number of elements in the preimage $f^{-1}(b_{i}).$
For instance, if $f:\{1,2,3,4,5,6\}\to\{2,3,6\}$ is defined by 
\[
f=\left(\begin{array}{cccccc}
1 & 2 & 3 & 4 & 5 & 6\\
2 & 6 & 2 & 6 & 3 & 6
\end{array}\right)
\]
then $\mu_{f}=[2,1,3]\vDash6$. It is clear that two functions $f_{1},f_{2}\in\Lt(\id_{k})$
are in the same orbit (under the right action of $S_{k}$) if and
only if $\im(f_{1})=\im(f_{2})$ and $\mu_{f_{1}}=\mu_{f_{2}}$. Therefore,
we can index the orbits by pairs $(B,\mu$) where $B\subseteq\{1,\ldots,n\}$
and $\mu\vDash k$ is a composition with $|\mu|=|B|$. Denote such
an orbit by $O_{(B,\mu)}.$ It is also easy to check that if $f\in O_{(B,\mu)}$
with $\mu=[\mu_{1},\ldots,\mu_{r}]$ then the stabilizer of $f$ is
isomorphic to $S_{\mu}=S_{\mu_{1}}\times\ldots\times S_{\mu_{r}}$
(the Young subgroup corresponding to $\mu$). Now, as a $\mathbb{C}$-vector
space we have a decomposition 
\begin{align*}
\mathbb{C}\Lt(\id_{k})\bigotimes_{\mathbb{C}S_{k}}S^{\lambda} & \simeq\left(\bigoplus\mathbb{C}O_{(B,\mu)}\right)\bigotimes_{\mathbb{C}S_{k}}S^{\lambda}
\end{align*}
where the sum is over all pairs $(B,\mu)$ where $B\subseteq\{1,\ldots,n\}$
and $\mu\vDash k$ is a composition with $|\mu|=|B|$. This is clearly
isomorphic to 
\[
\bigoplus\left(\mathbb{C}O_{(B,\mu)}\bigotimes_{\mathbb{C}S_{k}}S^{\lambda}\right).
\]
Since $S_{k}$ acts transitively on $O_{(B,\mu)}$ and since $S_{\mu}$
is the stabilizer of this action it is known that $\mathbb{C}O_{(B,\mu)}\simeq\Ind_{S_{\mu}}^{S_{k}}\tr_{\mu}$
where $\tr_{\mu}$ is the trivial module of $\mathbb{C}S_{\mu}$.
This is the Young module corresponding to $\mu$. The multiplicity
of $S^{\lambda}$ in the decomposition of $\Ind_{S_{\mu}}^{S_{k}}\tr_{\mu}$
(or $D(S^{\lambda})$ if we consider the right module) is the Kostka
number $K_{\lambda\mu}$. Therefore, 
\[
\dim\mathbb{C}O_{(B,\mu)}\bigotimes_{\mathbb{C}S_{k}}S^{\lambda}=K_{\lambda\mu}
\]
and 
\[
\dim\bigoplus\left(\mathbb{C}O_{(B,\mu)}\bigotimes_{\mathbb{C}S_{k}}S^{\lambda}\right)=\sum K_{\lambda\mu}
\]
where again the sum is over all pairs $(B,\mu)$ where $B\subseteq\{1,\ldots,n\}$
and $\mu\vDash k$ is a composition with $|\mu|=|B|$. Since $K_{\lambda\mu}$
does not depend on $B$ this equals 
\[
\sum_{l=1}^{k}\sum_{\overset{\mu\vDash k}{|\mu|=l}}{n \choose l}K_{\lambda\mu}.
\]
So we have obtained the following result. 
\begin{prop}
\label{prop:dim_of_proj_module}The dimension of the indecomposable
projective module \linebreak{}
$\text{\mbox{\ensuremath{\mathbb{C}\Lt(\id_{k})\otimes\mathbb{C}S^{\lambda}}}}$
is given by the formula 
\[
\sum_{l=1}^{k}\sum_{\overset{\mu\vDash k}{|\mu|=l}}{n \choose l}K_{\lambda\mu}.
\]
\end{prop}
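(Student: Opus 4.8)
The plan is to unwind \corref{DimensionProjective} in this concrete setting. Applying that corollary to $S=\PT_{n}$, the regular $\Jc$-class $J$ of $\id_{k}$, and $V=S^{\lambda}$, the dimension of $\mathbb{C}\Lt(\id_{k})\bigotimes_{\mathbb{C}S_{k}}S^{\lambda}$ equals the multiplicity of the dual module $D(S^{\lambda})$ in the decomposition of the right $\mathbb{C}S_{k}$-module $\mathbb{C}\Lt(\id_{k})$ into simple right modules. Since over $\mathbb{C}$ every Specht module of $S_{k}$ is self-dual, this multiplicity agrees with that of $S^{\lambda}$ in the corresponding (right) Young-module decomposition, so it suffices to analyze $\mathbb{C}\Lt(\id_{k})$ as a permutation module for the right action of $S_{k}$ by precomposition.

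First I would record the combinatorics of that action. An element of $\Lt(\id_{k})$ is a partial function $f\in\PT_{n}$ with $\dom(f)=\{1,\ldots,k\}$, equivalently an onto map $f\colon\{1,\ldots,k\}\to\im(f)$ with $\im(f)\subseteq\{1,\ldots,n\}$, and $S_{k}$ acts on the right by precomposition. Two such maps lie in the same orbit precisely when they have the same image $B$ and the same multiset of fibre sizes, which I encode as a composition $\mu=\mu_{f}\vDash k$ with $|\mu|=|B|$ (ordering the parts by listing $B$ increasingly). Hence the orbits are indexed by pairs $(B,\mu)$ with $B\subseteq\{1,\ldots,n\}$ and $\mu\vDash k$, $|\mu|=|B|$; I would then check that the stabilizer of any $f$ in the orbit $O_{(B,\mu)}$ is a conjugate of the Young subgroup $S_{\mu}=S_{\mu_{1}}\times\cdots\times S_{\mu_{r}}$.

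Next, since $S_{k}$ acts transitively on each $O_{(B,\mu)}$ with point stabilizer $S_{\mu}$, the transitive permutation module $\mathbb{C}O_{(B,\mu)}$ is isomorphic to the induced Young module $\Ind_{S_{\mu}}^{S_{k}}\tr_{\mu}$, in whose Specht decomposition $S^{\lambda}$ (equivalently $D(S^{\lambda})$) occurs with multiplicity the Kostka number $K_{\lambda\mu}$, as recalled in the Preliminaries. Writing $\mathbb{C}\Lt(\id_{k})=\bigoplus_{(B,\mu)}\mathbb{C}O_{(B,\mu)}$ as the orbit decomposition and summing multiplicities, the total multiplicity of $D(S^{\lambda})$ is $\sum_{(B,\mu)}K_{\lambda\mu}$. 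Since $K_{\lambda\mu}$ depends only on $\mu$ and there are exactly $\binom{n}{l}$ subsets $B\subseteq\{1,\ldots,n\}$ with $|B|=l$, this collapses to $\sum_{l=1}^{k}\binom{n}{l}\sum_{\mu\vDash k,\ |\mu|=l}K_{\lambda\mu}$, the claimed formula.

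The representation-theoretic ingredients — self-duality of Specht modules over $\mathbb{C}$ and the Kostka-number description of the decomposition of $\Ind_{S_{\mu}}^{S_{k}}\tr_{\mu}$ — are classical and can simply be quoted, so the only genuine work is the combinatorial bookkeeping in the second paragraph: pinning down the orbit parametrization of the right $S_{k}$-set $\Lt(\id_{k})$ and verifying that the point stabilizers are indeed conjugates of the Young subgroups $S_{\mu}$, which is what legitimizes the isomorphism $\mathbb{C}O_{(B,\mu)}\cong\Ind_{S_{\mu}}^{S_{k}}\tr_{\mu}$. I expect that identification of stabilizers to be the main (though routine) obstacle; everything else is assembly.
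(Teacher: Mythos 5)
Your proposal is correct and follows essentially the same route as the paper: apply \corref{DimensionProjective}, decompose $\mathbb{C}\Lt(\id_{k})$ into $S_{k}$-orbits indexed by pairs $(B,\mu)$, identify each orbit module with the Young module $\Ind_{S_{\mu}}^{S_{k}}\tr_{\mu}$ via the stabilizer computation, and sum the Kostka numbers $K_{\lambda\mu}$ over the $\binom{n}{l}$ choices of image. The only additions beyond the paper's argument are the explicit remark on self-duality of Specht modules over $\mathbb{C}$ and the observation that stabilizers are conjugates of $S_{\mu}$, both of which are harmless refinements.
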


\paragraph*{Cartan matrix of $\mathbb{C}\PT_{n}$}
\begin{defn}
Let $A$ be a $\mathbb{C}$-algebra. Let $\{S(1),\ldots,S(r)\}$ be
the irreducible representations of $A$ up to isomorphism and let
$\{P(1),\ldots,P(r)\}$ be the indecomposable projective modules of
$A$ ordered such that $P(i)$ is the projective cover of $S(i).$
The \emph{Cartan matrix} of $A$ is the $r\times r$ matrix whose
$(i,j)$ entry is the number of times that $S(i)$ appears as a Jordan-H�lder
factor in $P(j)$.

We want to show that certain entries in the Cartan matrix of $\mathbb{C}\PT_{n}$
equal $0$. The proof is similar to the proof of \propref{dim_of_proj_module}.
The irreducible representations of $\mathbb{C}\PT_{n}$ (and hence
the rows and columns of the Cartan matrix) are indexed by Young diagrams
with $k$ boxes for $0\leq k\leq n$. For every Young diagram $\alpha\vdash k$
we denote by $S(\alpha)$ and $P(\alpha)$ the associated irreducible
and indecomposable projective module respectively. We think of the
Cartan matrix as an $(n+1)\times(n+1)$ block matrix where the $(i,j)$
block contains pairs $(\alpha,\beta)$ of Young diagrams such that
$\alpha\vdash(i-1)$ and $\beta\vdash(j-1)$. Denote by $E(k,r)$
the set of all onto functions $f:\{1,\ldots,k\}\to\{1,\ldots,r\}$.
It is clear that $\mathbb{C}E(k,r)$ has a natural structure of $\mathbb{C}S_{r}-\mathbb{C}S_{k}$
bi-module. The following result regarding the Cartan matrix was obtained
in \cite[Corollary 4.2]{Stein2019} (this is a specific case of a
known formula for the Cartan matrix of any EI-category algebra). 
\end{defn}
\begin{prop}
Let $\alpha\vdash r$ and $\beta\vdash k$ be two Young diagrams.
The number of times that $S(\alpha)$ appears as a Jordan-H�lder factor
of $P(\beta)$ is the number of times that $S^{\alpha}$ appears as
an irreducible constituent in the $\mathbb{C}S_{r}$ module $\mathbb{C}E(k,r){\displaystyle \bigotimes_{\mathbb{C}S_{k}}S^{\beta}}$. 
\end{prop}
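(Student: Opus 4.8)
The plan is to apply \thmref{MainIso} and \corref{Proj\_Modules\_Form} to reduce the computation of Jordan--H\"older multiplicities of $P(\beta)$ to a computation inside the Ehresmann category algebra $\mathbb{C}E_{n}$, and then to identify the relevant category-theoretic module with the bimodule $\mathbb{C}E(k,r)$. First I would recall that, since $\mathbb{C}\PT_{n}\simeq\mathbb{C}E_{n}$ by \thmref{MainIso} and $E_{n}$ is an EI-category, \lemref{EquivalenceOfPrimitiveIdempotents} gives a complete set of primitive orthogonal idempotents for $\mathbb{C}E_{n}$ by collecting primitive idempotents $p_{\beta}\in\mathbb{C}S_{k}$ (one for each partition $\beta\vdash k$, $0\le k\le n$) sitting inside the endomorphism group of the representative object $\{1,\ldots,k\}$. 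The indecomposable projective $P(\beta)$ is then $\mathbb{C}E_{n}\,C(p_{\beta})$, and by the discussion preceding \thmref{ProjectiveModulesDescription} this is isomorphic to $\mathbb{C}\Lt(\id_{k})\otimes_{\mathbb{C}S_{k}}S^{\beta}$.

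Next I would compute the multiplicity of the simple module $S(\alpha)$ (for $\alpha\vdash r$) as a composition factor of $P(\beta)$ by decomposing along the object set of $E_{n}$. Because $E_{n}$ is EI, the category algebra is filtered by the $\mathcal{J}$-order on objects, and the "layer at objects of size $r$'' of the projective $\mathbb{C}E_{n}\,C(p_{\beta})$ is obtained by evaluating the corresponding functor at an object of cardinality $r$ --- concretely, by applying the idempotent $C(\id_{r})$ (or rather summing the idempotents over all size-$r$ objects) and restricting to the $\mathbb{C}S_{r}$-module structure on the endomorphism group. The key identification is that, as a $\mathbb{C}S_{r}$-module, $C(\id_{r})\cdot\bigl(\mathbb{C}E_{n}\,C(\id_{k})\bigr)$ is exactly $\mathbb{C}E_{n}(\{1,\ldots,k\},\{1,\ldots,r\})$, the span of all onto functions from a $k$-set to the $r$-set $\{1,\ldots,r\}$, i.e.\ $\mathbb{C}E(k,r)$ as a $\mathbb{C}S_{r}$--$\mathbb{C}S_{k}$ bimodule (up to the harmless choice of which particular $r$-subset of $\{1,\ldots,n\}$ one uses, which does not affect the $S_{r}$-module structure). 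Tensoring over $\mathbb{C}S_{k}$ with $S^{\beta}$ (equivalently, using $\mathbb{C}S_{k}p_{\beta}\simeq S^{\beta}$), the multiplicity of $S(\alpha)$ in the $r$-layer of $P(\beta)$ becomes the multiplicity of the simple $\mathbb{C}S_{r}$-module $S^{\alpha}$ in $\mathbb{C}E(k,r)\otimes_{\mathbb{C}S_{k}}S^{\beta}$, since $\mathbb{C}S_{r}$ is semisimple and composition factors at a single object are just irreducible constituents of that $\mathbb{C}S_{r}$-module.

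The only point requiring care --- and the main obstacle --- is the claim that for EI-category algebras the number of times $S(\alpha)$ occurs as a Jordan--H\"older factor of $P(\beta)$ equals the number of irreducible constituents of the single $\mathbb{C}S_{r}$-module obtained at the object of size $r$; one must check that no composition factor supported at $S_{r}$ is "lost'' or "double counted'' across the filtration by object size. This is precisely the standard fact that for a finite EI-category $\mathcal{C}$ and objects $e,c$, the multiplicity of the simple $\mathbb{C}\mathcal{C}$-module $S_{c,V}$ in the projective cover $P_{e,W}$ equals the multiplicity of $V$ in the $\mathbb{C}\mathcal{C}(c,c)$-module $\mathbb{C}\mathcal{C}(e,c)\otimes_{\mathbb{C}\mathcal{C}(e,e)}W$, which follows from the block-triangular structure of $\mathbb{C}\mathcal{C}$ with respect to the object order together with semisimplicity of the local group algebras; this is exactly the content cited from \cite[Corollary 4.2]{Stein2019} and I would simply invoke it, noting that $\mathbb{C}\PT_{n}\simeq\mathbb{C}E_{n}$ makes it applicable here. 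Assembling these identifications gives the stated formula.
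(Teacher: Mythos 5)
Your proposal is correct, but note that the paper itself offers no proof of this proposition: it is stated with a citation to \cite[Corollary 4.2]{Stein2019} and flagged as a special case of the general Cartan-matrix formula for EI-category algebras. What you have written is essentially the standard argument behind that citation: transport $P(\beta)$ across the isomorphism $\mathbb{C}\PT_{n}\simeq\mathbb{C}E_{n}$ to $\mathbb{C}E_{n}C(p_{\beta})$, use exactness of $M\mapsto C(\id_{\{1,\ldots,r\}})M$ together with the fact that a simple module of an EI-category algebra is supported only on its defining isomorphism class of objects (so $C(\id_{\{1,\ldots,r\}})S(\gamma)$ vanishes unless $\gamma$ is a partition of $r$, in which case it returns $S^{\gamma}$), and identify $C(\id_{\{1,\ldots,r\}})\,\mathbb{C}E_{n}\,C(\id_{\{1,\ldots,k\}})$ with the span of the onto functions $\{1,\ldots,k\}\to\{1,\ldots,r\}$, i.e.\ with $\mathbb{C}E(k,r)$ as a $\mathbb{C}S_{r}$--$\mathbb{C}S_{k}$ bimodule. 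One small imprecision: your parenthetical suggestion to sum the idempotents over \emph{all} objects of cardinality $r$ is not the right move as stated --- restricting that sum to the $\mathbb{C}S_{r}$-action at a single object, or working with the corner algebra it generates, requires an extra Morita-type bookkeeping step --- but your actual computation uses the single representative object $\{1,\ldots,r\}$, which is the clean and correct version. With that understood, the argument assembles exactly to the stated formula.
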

\begin{rem}
It is proved in \cite{Stein2019} that the Cartan matrix of $\mathbb{C}\PT_{n}$
is block upper unitriangular. Moreover, a combinatorial description
for the first and second block superdiagonals is given there. 
\end{rem}
\begin{prop}
\label{prop:Module_Is_zero_If_content_long}Let $\beta\vdash k$ be
a Young diagram such that $r<|\beta|$ then $\mathbb{C}E(k,r){\displaystyle \bigotimes_{\mathbb{C}S_{k}}S^{\beta}=0}$. 
\end{prop}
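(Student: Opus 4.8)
The plan is to show that the bimodule $\mathbb{C}E(k,r)$ is actually the zero vector space when $r < |\beta|$ — wait, that is false, since $E(k,r)$ is nonempty whenever $r \le k$. So the correct approach is to analyze the right $\mathbb{C}S_k$-module structure of $\mathbb{C}E(k,r)$ and show that $D(S^\beta)$ (equivalently $S^\beta$, since over $\mathbb{C}$ the symmetric group modules are self-dual) does not appear as a constituent once $|\beta| = r' > r$. First I would decompose $\mathbb{C}E(k,r)$ into $S_k$-orbits exactly as in the proof of \propref{dim_of_proj_module}: the right action of $S_k$ on $E(k,r)$ has orbits indexed by compositions $\mu \vDash k$ with exactly $r$ parts (the part sizes being the cardinalities of the fibers of $f$), and the stabilizer of a function in the orbit $O_\mu$ is the Young subgroup $S_\mu = S_{\mu_1} \times \cdots \times S_{\mu_r}$. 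Hence
\[
\mathbb{C}E(k,r) \simeq \bigoplus_{\substack{\mu \vDash k \\ \ell(\mu) = r}} \Ind_{S_\mu}^{S_k} \tr_\mu
\]
as right $\mathbb{C}S_k$-modules, where $\ell(\mu)$ denotes the number of parts of $\mu$.

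Next I would invoke the classical fact recalled in the Preliminaries: the multiplicity of the Specht module $S^\beta$ in the Young module $\Ind_{S_\mu}^{S_k}\tr_\mu$ is the Kostka number $K_{\beta\mu}$. So the multiplicity of $S^\beta$ in $\mathbb{C}E(k,r)$ is $\sum_{\mu \vDash k,\, \ell(\mu)=r} K_{\beta\mu}$. The key combinatorial step is then: if $\mu$ has only $r$ parts and $\beta$ is a partition with more than $r$ parts (i.e. $|\beta| = \ell(\beta) > r$, where I am reading $|\beta|$ as the number of rows of the Young diagram $\beta$), then $K_{\beta\mu} = 0$. Indeed, a semistandard Young tableau of shape $\beta$ and content $\mu$ has strictly increasing columns, so the first column alone requires at least $\ell(\beta)$ distinct entry values; but the content $\mu$ uses only the $r$ values $1, \dots, r$, so no such tableau exists when $\ell(\beta) > r$. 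Therefore every term in the sum vanishes, the multiplicity of $S^\beta$ (hence of $D(S^\beta) \simeq S^\beta$) in $\mathbb{C}E(k,r)$ is zero, and by \propref{Dimension_Of_Module_Of_a_Group} the dimension of $\mathbb{C}E(k,r)\bigotimes_{\mathbb{C}S_k} S^\beta$ is zero, i.e. the module itself is zero.

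The main obstacle is essentially bookkeeping about what $|\beta|$ denotes: here it must mean the length $\ell(\beta)$ of the partition (the number of nonzero rows), not the number of boxes — this is consistent with the earlier phrase ``$\mu \vDash k$ is a composition with $|\beta|$ parts''-style usage, and with the statement being nonvacuous. Once that is fixed, the argument is a direct combination of the orbit decomposition (copied verbatim from the $\mathbb{C}\Lt(\id_k)$ analysis), the Young-module/Kostka multiplicity formula already quoted, and the one-line observation that a Kostka number $K_{\beta\mu}$ with $\ell(\beta) > \ell(\mu)$ is zero because the first column of a semistandard tableau of shape $\beta$ cannot be filled from a content supported on fewer than $\ell(\beta)$ symbols. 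I would present it in roughly four sentences, since all the machinery is in place.
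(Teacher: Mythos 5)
Your proposal is correct and follows essentially the same route as the paper: decompose $\mathbb{C}E(k,r)$ into $S_{k}$-orbits indexed by compositions $\mu\vDash k$ with $r$ parts, identify each orbit module as the Young module $\Ind_{S_{\mu}}^{S_{k}}\tr_{\mu}$ contributing the Kostka number $K_{\beta\mu}$, and observe that $K_{\beta\mu}=0$ when the content has fewer than $|\beta|$ values. Your reading of $|\beta|$ as the number of parts is the intended one (matching the paper's use of $|\mu|=|B|$ for the number of parts of $\mu$), and your first-column argument is just a spelled-out version of the paper's closing assertion.
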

\begin{proof}
According to \propref{Dimension_Of_Module_Of_a_Group}, 
\[
\dim\mathbb{C}E(k,r){\displaystyle \bigotimes_{\mathbb{C}S_{k}}S^{\beta}}
\]
is the multiplicity of $D(S^{\beta})$ in the right $\mathbb{C}S_{k}$-module
$\mathbb{C}E(k,r)$. This is a permutation module, induced from the
right action of $S_{k}$ on $E(k,r)$. It is clear that two functions
$f_{1},f_{2}\in E(k,r)$ are in the same orbit if and only if $\mu_{f_{1}}=\mu_{f_{2}}$
(again, $\mu_{f}=[\mu_{1},\ldots,\mu_{r}]\vDash k$ where $\mu_{i}=|f^{-1}(i)|$).
Therefore, we can index the orbits by compositions, $\mu\vDash k$
with $|\mu|=r$. Denote such an orbit by $O_{\mu}$. As $\mathbb{C}$-Vector
spaces 
\begin{align*}
\mathbb{C}E(k,r){\displaystyle \bigotimes_{\mathbb{C}S_{k}}S^{\beta}} & \simeq\left(\bigoplus_{\underset{|\mu|=r}{\mu\vDash k}}O_{\mu}\right){\displaystyle \bigotimes_{\mathbb{C}S_{k}}S^{\beta}}\\
 & \simeq\bigoplus_{\underset{|\mu|=r}{\mu\vDash k}}\left(O_{\mu}{\displaystyle \bigotimes_{\mathbb{C}S_{k}}S^{\beta}}\right).
\end{align*}
It is clear that the stabilizer of $f\in O_{\mu}$ is $S_{\mu}\simeq S_{\mu_{1}}\times\cdots\times S_{\mu_{r}}.$
As we have already seen, this implies that 
\[
\dim O_{\mu}{\displaystyle \bigotimes_{\mathbb{C}S_{k}}S^{\beta}}=\dim\Ind_{S_{\mu}}^{S_{k}}\tr_{\mu}{\displaystyle \bigotimes_{\mathbb{C}S_{k}}S^{\beta}}=K_{\beta\mu}.
\]
The assumption $|\mu|=r<|\beta|$ implies that $K_{\beta\mu}=0$ since
there can be no semistandard Young tableau with shape $\beta$ and
content $\mu$ for $|\mu|<|\beta|$. 
\end{proof}
\propref{Module_Is_zero_If_content_long} immediately implies the
following corollary: 
\begin{prop}
\label{prop:Zero_entries_thm}Let $\alpha\vdash r$ and $\beta\vdash k$
two young diagrams such that $r<|\beta|.$ Then the $(\alpha,\beta)$
entry in the Cartan matrix of $\mathbb{C}\PT_{n}$ is $0$. 
\end{prop}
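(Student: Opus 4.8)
The statement follows by combining the Cartan matrix formula of the preceding proposition with Proposition~\ref{prop:Module_Is_zero_If_content_long}. The plan is as follows. Fix Young diagrams $\alpha \vdash r$ and $\beta \vdash k$ with $r < |\beta|$; note that $|\beta| = k$, so the hypothesis is simply $r < k$. By the formula quoted above, the $(\alpha,\beta)$ entry of the Cartan matrix of $\mathbb{C}\PT_{n}$ equals the multiplicity of $S^{\alpha}$ as an irreducible constituent of the $\mathbb{C}S_{r}$-module $\mathbb{C}E(k,r)\bigotimes_{\mathbb{C}S_{k}}S^{\beta}$.

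The key observation is that this entire $\mathbb{C}S_{r}$-module vanishes. Indeed, Proposition~\ref{prop:Module_Is_zero_If_content_long} asserts precisely that $\mathbb{C}E(k,r)\bigotimes_{\mathbb{C}S_{k}}S^{\beta}=0$ whenever $r < |\beta|$. Since the zero module has no irreducible constituents, the multiplicity of $S^{\alpha}$ in it is $0$, and hence the $(\alpha,\beta)$ entry of the Cartan matrix is $0$, as claimed.

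There is essentially no obstacle here: the statement is an immediate corollary, and the only thing to check is that the hypotheses line up --- namely that ``$r < |\beta|$'' in the current statement is the same condition appearing in Proposition~\ref{prop:Module_Is_zero_If_content_long}, which it is since $\beta \vdash k$ forces $|\beta| = k$. All the real content --- the reduction to a permutation module, the decomposition into $S_{k}$-orbits $O_{\mu}$ indexed by compositions $\mu \vDash k$ with $|\mu| = r$, the identification of each $O_{\mu}\bigotimes_{\mathbb{C}S_{k}}S^{\beta}$ with $\Ind_{S_{\mu}}^{S_{k}}\tr_{\mu}\bigotimes_{\mathbb{C}S_{k}}S^{\beta}$ of dimension $K_{\beta\mu}$, and the vanishing of every Kostka number $K_{\beta\mu}$ when $|\mu| < |\beta|$ --- has already been carried out in the proof of Proposition~\ref{prop:Module_Is_zero_If_content_long}. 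So the proof of the present proposition is a single sentence invoking that result together with the Cartan entry formula.
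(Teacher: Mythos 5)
Your proposal is correct and matches the paper exactly: the paper also treats this as an immediate consequence of Proposition~\ref{prop:Module_Is_zero_If_content_long} together with the Cartan-entry formula expressing the $(\alpha,\beta)$ entry as the multiplicity of $S^{\alpha}$ in $\mathbb{C}E(k,r)\bigotimes_{\mathbb{C}S_{k}}S^{\beta}$, which vanishes when $r<|\beta|$. Nothing further is needed.
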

\begin{rem}
This result is a generalization of some known facts about the projective
modules of $\mathbb{C}\PT_{n}$. Consider the case where $\beta=[1^{k}]$
for $1\leq k\leq n$. \propref{Zero_entries_thm} (with the fact that
the Cartan matrix is unitriangular) says that the only irreducible
constituent in $P(\beta)$ is $S(\beta)$. Therefore $S(\beta)\simeq P(\beta)$
which is a known fact about $\PT_{n}$ (see \cite[Lemma 6.7]{Stein2019}).
In the case where $\beta=[2,1^{k-2}]$ for some $2\leq k\leq n$,
\propref{Zero_entries_thm} is precisely \cite[Lemma 6.4]{Stein2019}
which is a key step in the proof that the global dimension of $\mathbb{C}\PT_{n}$
is $n-1$. 
\end{rem}

\section*{Appendix: \label{sec:Counterexample}Ehresmann semigroups and categories
- two counterexamples}

The isomorphism between the algebras of a right restriction Ehresmann
semigroup and the corresponding category (\thmref{MainIso}) was presented
in \cite{Stein2017} without the requirement of $S$ being right restriction.
However, Shoufeng Wang has found out that the proof implicitly assumes
it. In fact, the functions $\psi$ and $\varphi$ of \thmref{MainIso}
are $\Bbbk$-algebra homomorphisms if and only if $S$ is right restriction
\cite[Lemma 4.3]{Wang2017}. This led to a correction \cite{Stein2018erratum},
but it was not clear whether being right restriction is necessary
for the conclusion of \thmref{MainIso} to be true. In this appendix
we show that this requirement can not be omitted. A counterexample
is given by the partition monoid which arises in the study of partition
algebras (see \cite{Halverson2005}). On the other hand, we show that
the algebra of the semigroup in \exaref{EI_Ehresmann_not_Right_left_restriction}
is isomorphic to the corresponding category algebra despite being
neither left nor right restriction. 

The partition monoid $\PR_{n}$ for $n\in\mathbb{N}$ can be described
in the following way. Define ${\bf n}=\{1,\ldots,n\}$ and ${\bf n}^{\prime}=\{1^{\prime},\ldots,n^{\prime}\}$.
An element $\alpha\in\PR_{n}$ is a partition of the set ${\bf n}\cup{\bf n}^{\prime}$.
For every $x,y\in{\bf n}\cup{\bf n}^{\prime}$ we write $x\sim_{\alpha}y$
when $x$ and $y$ are in the same partition class of $\alpha$. We
refer the reader to \cite{Halverson2005} for the description of the
operation of $\PR_{n}$ which is best pictured with certain diagrams. 

For any partition $\alpha\in\PR_{n}$ we follow \cite{East2011} and
define two equivalence relations on ${\bf n}$ by
\[
\ker(\alpha)=\{(i,j)\in{\bf n}\times{\bf n}\mid i\sim_{\alpha}j\}
\]
\[
\coker(\alpha)=\{(i,j)\in{\bf n}\times{\bf n}\mid i^{\prime}\sim_{\alpha}j^{\prime}\}.
\]
Note that there are two types of equivalence classes in $\ker(\alpha)$.
A kernel class $K$ is in the domain of $\alpha$ if there exists
$j^{\prime}\in{\bf n}^{\prime}$ such that $j^{\prime}\sim_{\alpha}i$
for some (and hence all) $i\in K$. Likewise a cokernel class $K^{\prime}$
is in the codomain of $\alpha$ if there exists $i\in{\bf n}$ such
that $i\sim_{\alpha}j^{\prime}$ for some (all) $j^{\prime}\in K^{\prime}$.
Note also that the number of domain classes in $\ker(\alpha)$ equals
the number of codomain classes in $\coker(\alpha)$. For every equivalence
relation $\sigma$ on ${\bf n}$ we define a partition $e_{\sigma}\in\PR_{n}$
in the following way. Two elements $x,y\in{\bf n}\cup{\bf n^{\prime}}$
will be in the same $e_{\sigma}$-class if $(i,j)\in\sigma$ where
$x=i$ or $x=i^{\prime}$ and $y=j$ or $y=j^{\prime}$. It is clear
that $e_{\sigma}$ is an idempotent with $\ker(e_{\sigma})=\coker(e_{\sigma})=\sigma$.
Moreover, $e_{\sigma_{1}}e_{\sigma_{2}}=e_{\sigma_{3}}$ where $\sigma_{3}$
is the minimal equivalence relation such that $\sigma_{1},\sigma_{2}\subseteq\sigma_{3}$
(i.e., it is the join in the lattice of equivalence relations on ${\bf n}$).
Therefore, the set 
\[
E=\{e_{\sigma}\mid\sigma\text{ is an equivalence relation on }{\bf n}\}
\]
is a subsemilattice of $\PR_{n}$ (note that $e_{\sigma_{1}}\leq e_{\sigma_{2}}$if
$\sigma_{2}\subseteq\sigma_{1}$). For every $\alpha\in\PR_{n}$ it
is clear that $e_{\ker(\alpha)}$ ($e_{\coker(\alpha)}$) is the minimal
element $e\in E$ such that $e\alpha=\alpha$ (respectively, $\alpha e=\alpha$)
hence we can write 
\[
\alpha^{+}=e_{\ker(\alpha)},\quad\alpha^{\ast}=e_{\coker(\alpha)}.
\]
It is also possible to check that the identities 
\[
(\alpha\beta)^{+}=(\alpha\beta^{+})^{+},\quad(\alpha\beta)^{\ast}=(\alpha^{\ast}\beta)^{\ast}
\]
are satisfied for every $\alpha,\beta\in\PR_{n}$. Therefore, $\PR_{n}$
is an Ehresmann semigroup and there exists a corresponding Ehresmann
category $\mathcal{C}_{n}$. This fact was first noted by East and
Gray \cite{East2020}. In this specific case it will be more convenient
to compose morphisms in a category ``from left to right'' because
it fits better the standard description of the partition monoid. The
objects of $\mathcal{C}_{n}$ are in one-to-one correspondence with
equivalence relations (or partitions) on ${\bf n}$ and for every
$\alpha\in\PR_{n}$ there is a corresponding morphism $C(\alpha)$
with domain $\ker(\alpha)$ and range $\coker(\alpha)$. Note that
$\mathcal{C}_{n}$ is not an EI-category. According to \cite{Halverson2005},
the algebra $\mathbb{C}\PR_{n}$ is not semisimple (for $n>1$). In
order to complete our counterexample it is enough to show that $\mathbb{C}\mathcal{C}_{n}$
is semisimple, hence $\Bbbk\PR_{n}\not\simeq\Bbbk\mathcal{C}_{n}$
for $\Bbbk=\mathbb{C}$. Denote by ${\bf 1}_{n}$ the identity relation
on ${\bf n}$. Note that $e_{{\bf 1}_{n}}$is the identity of $\PR_{n}$
which will also be denoted by ${\bf 1}_{n}$ for the sake of simplicity.
\begin{prop}
\label{prop:Isomorphisms_for_morita}Let $\Bbbk$ be a field.
\begin{enumerate}
\item The following equality holds
\[
\mathbb{\Bbbk}\mathcal{C}_{n}{\bf 1}_{n}\mathbb{\Bbbk}\mathcal{C}_{n}=\mathbb{\Bbbk}\mathcal{C}_{n}.
\]
\item There is an isomorphism of $\mathbb{\Bbbk}$-algebras
\[
{\bf 1}_{n}\mathbb{\Bbbk}\mathcal{C}_{n}{\bf 1}_{n}\simeq\mathbb{\Bbbk}\IS_{n}
\]
 (where $\IS_{n}$ is the symmetric inverse monoid).
\end{enumerate}
\end{prop}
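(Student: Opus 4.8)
The plan is to work with the idempotent $\mathbf{1}_n=C(e_{\mathbf{1}_n})\in\Bbbk\mathcal{C}_n$, the identity morphism at the object corresponding to the discrete equivalence relation on $\mathbf{n}$, and to identify its corner algebra with $\Bbbk\IS_n$. The key preliminary observation for both parts is that $\mathbf{1}_n\cdot C(\alpha)\cdot\mathbf{1}_n$ equals $C(\alpha)$ exactly when $\ker(\alpha)=\coker(\alpha)=\mathbf{1}_n$, and (by definition of $e_{\ker(\alpha)}$ and $e_{\coker(\alpha)}$) this happens precisely when every block of the partition $\alpha$ meets $\mathbf{n}$ in at most one point and meets $\mathbf{n}'$ in at most one point. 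Such partitions are exactly the partial-bijection diagrams, and they are in a natural bijection with $\IS_n$ via $\alpha\mapsto\phi_\alpha$, where $\phi_\alpha$ is the partial injection whose graph is $\{(i,j) : \{i,j'\}\ \text{is a block of}\ \alpha\}$.

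For part (1) I would show that the identity morphism $C(e_\sigma)$ of each object $\sigma$ factors through $\mathbf{1}_n$. Let $K_1,\dots,K_r$ be the classes of the equivalence relation $\sigma$ on $\mathbf{n}$, and write $K_i'$ for the set of primed copies of the elements of $K_i$. Define $\alpha\in\PR_n$ to have blocks $K_i\cup\{i'\}$ for $1\le i\le r$ together with the singletons $\{j'\}$ for $r<j\le n$, and define $\beta\in\PR_n$ to have blocks $\{i\}\cup K_i'$ for $1\le i\le r$ together with the singletons $\{j\}$ for $r<j\le n$. Then $\ker(\alpha)=\sigma$, $\coker(\alpha)=\mathbf{1}_n$, $\ker(\beta)=\mathbf{1}_n$, $\coker(\beta)=\sigma$, and a direct computation of the stacked diagram gives $\alpha\beta=e_\sigma$. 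Hence $C(e_\sigma)=C(\alpha)\cdot\mathbf{1}_n\cdot C(\beta)\in\Bbbk\mathcal{C}_n\mathbf{1}_n\Bbbk\mathcal{C}_n$, which is a two-sided ideal. Since every basis morphism satisfies $C(\gamma)=C(e_{\ker(\gamma)})\cdot C(\gamma)$, it lies in this ideal as well, so $\Bbbk\mathcal{C}_n\mathbf{1}_n\Bbbk\mathcal{C}_n=\Bbbk\mathcal{C}_n$.

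For part (2) I would use the bijection $C(\alpha)\mapsto\phi_\alpha$ between the basis $\{C(\alpha) : \ker(\alpha)=\coker(\alpha)=\mathbf{1}_n\}$ of $\mathbf{1}_n\Bbbk\mathcal{C}_n\mathbf{1}_n$ and $\IS_n$, and check compatibility with multiplication. Because the domain and range of such a $C(\alpha)$ are both the object $\mathbf{1}_n$, the composition $C(\alpha)\cdot C(\beta)$ is always defined and equals $C(\alpha\beta)$; computing the stacked diagram of two partial-bijection partitions—where no middle block floats, and $\coker(\alpha)=\ker(\beta)=\mathbf{1}_n$ prevents any new coalescence of blocks—shows $\phi_{\alpha\beta}=\phi_\beta\circ\phi_\alpha$. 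Thus $\alpha\mapsto\phi_\alpha$ is an anti-isomorphism of monoids onto $\IS_n$; composing with the inversion anti-automorphism of $\IS_n$ (equivalently, using $\Bbbk\IS_n\simeq\Bbbk\IS_n^{\mathrm{op}}$) turns it into an isomorphism, which extends linearly to an algebra isomorphism $\mathbf{1}_n\Bbbk\mathcal{C}_n\mathbf{1}_n\simeq\Bbbk\IS_n$ carrying the identity $\mathbf{1}_n=C(e_{\mathbf{1}_n})$ to the identity of $\IS_n$.

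The only place where genuine care is needed is the diagram bookkeeping in $\PR_n$: verifying $\alpha\beta=e_\sigma$ in part (1) and $\phi_{\alpha\beta}=\phi_\beta\circ\phi_\alpha$ in part (2). One must stay consistent about the left-to-right composition convention chosen for $\mathcal{C}_n$ versus the right-to-left convention used for partial functions—this discrepancy is exactly what forces the harmless passage through $\IS_n^{\mathrm{op}}$—and one must remember that we are working in the partition monoid, not the partition algebra, so a block lying entirely in the middle row is simply deleted, with no factor of a loop parameter. Granting these routine computations, both statements follow; together they make $\mathbf{1}_n$ a full idempotent with $\mathbf{1}_n\Bbbk\mathcal{C}_n\mathbf{1}_n\simeq\Bbbk\IS_n$, which is precisely the input needed for the subsequent Morita-equivalence argument showing that $\mathbb{C}\mathcal{C}_n$ is semisimple while $\mathbb{C}\PR_n$ is not.
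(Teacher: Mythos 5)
Your proof is correct and follows essentially the same route as the paper: both parts factor morphisms through the object $\mathbf{1}_{n}$ (your $\alpha,\beta$ are the paper's $\alpha_{1},\alpha_{2}$ specialized to $e_{\sigma}$, with the two-sided ideal property absorbing the general case), and both identify the corner ${\bf 1}_{n}\Bbbk\mathcal{C}_{n}{\bf 1}_{n}$ with the span of the partial-bijection diagrams. Your extra care about the anti-isomorphism forced by the mismatch of composition conventions is a harmless refinement of the step the paper dispatches by simply invoking the natural embedding of $\IS_{n}$ into $\PR_{n}$.
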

\begin{proof}
\begin{enumerate}
\item Let $\alpha\in\PR_{n}$. Denote by $K_{1},\ldots,K_{r}$ the kernel
classes of the domain and by $K_{r+1},\ldots,K_{s}$ the other kernel
classes. Likewise, denote by $K_{1}^{\prime},\ldots,K_{r}^{\prime}$
the cokernel classes of the codomain and by $K_{r+1}^{\prime},\ldots,K_{s^{\prime}}^{\prime}$
the other cokernel classes ($s\neq s^{\prime}$ in general). We order
the classes such that $i\sim_{\alpha}j$ for $i\in K_{l}$ and $j\in K_{l}^{\prime}$
(for $l\leq r$). Define a partition $\alpha_{1}\in\PR_{n}$ by 
\[
\alpha_{1}=\{K_{1}\cup\{1^{\prime}\},\ldots,K_{r}\cup\{r^{\prime}\},K_{r+1},\ldots K_{s},\{(r+1)^{\prime}\},\ldots,\{n^{\prime}\}\}
\]
and another partition $\alpha_{2}\in\PR_{n}$ by
\[
\alpha_{2}=\{\{1\}\cup K_{1}^{\prime},\ldots,\{r\}\cup K_{r}^{\prime},\{r+1\},\ldots,\{n\},K_{r+1}^{\prime},\ldots,K_{s^{\prime}}^{\prime}\}.
\]
First note that $\alpha=\alpha_{1}\alpha_{2}=\alpha_{1}{\bf 1}_{n}\alpha_{2}$
and moreover, 
\[
\ker(\alpha_{1})=\ker(\alpha)
\]
\[
\coker(\alpha_{1})={\bf 1}_{n}=\ker(\alpha_{2})
\]
\[
\coker(\alpha_{2})=\coker(\alpha).
\]
This implies that 
\[
C(\alpha)=C(\alpha_{1})C({\bf 1}_{n})C(\alpha_{2})
\]
since the composition of morphisms on the right hand side is defined.
Therefore, $\Bbbk\mathcal{C}_{n}{\bf 1}_{n}\mathbb{\Bbbk}\mathcal{C}_{n}=\mathbb{\Bbbk}\mathcal{C}_{n}$
as required.
\item It is clear that $C(\alpha)\in{\bf 1}_{n}\mathcal{C}_{n}{\bf 1}_{n}$
if and only if $\ker(\alpha)=\coker(\alpha)={\bf 1}_{n}$ which implies
that $\alpha\in\IS_{n}$ (according to the natural embedding of $\IS_{n}$
in $\PR_{n}$, see \cite[Section 3]{East2011}). The claim follows
immediately.
\end{enumerate}
\end{proof}
\begin{cor}
\label{cor:Semisimple}$\mathbb{\Bbbk}\mathcal{C}_{n}$ is a semisimple
algebra if and only if $n!$ is invertible in $\Bbbk$.
\end{cor}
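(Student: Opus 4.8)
The plan is to realize $\mathbf{1}_{n}$ as a \emph{full idempotent} of $\Bbbk\mathcal{C}_{n}$ and reduce the question to the symmetric inverse monoid by Morita equivalence. By \propref{Isomorphisms_for_morita}(1) we have $\Bbbk\mathcal{C}_{n}\mathbf{1}_{n}\Bbbk\mathcal{C}_{n}=\Bbbk\mathcal{C}_{n}$, so $\Bbbk\mathcal{C}_{n}\mathbf{1}_{n}$ is a progenerator for $\Bbbk\mathcal{C}_{n}$ and the standard corner-algebra argument (the functor $M\mapsto\mathbf{1}_{n}M$, with quasi-inverse $N\mapsto\Bbbk\mathcal{C}_{n}\mathbf{1}_{n}\otimes_{\mathbf{1}_{n}\Bbbk\mathcal{C}_{n}\mathbf{1}_{n}}N$) gives a Morita equivalence between $\Bbbk\mathcal{C}_{n}$ and $\mathbf{1}_{n}\Bbbk\mathcal{C}_{n}\mathbf{1}_{n}$. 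By \propref{Isomorphisms_for_morita}(2) the latter is isomorphic to $\Bbbk\IS_{n}$. Since semisimplicity is a Morita invariant, $\Bbbk\mathcal{C}_{n}$ is semisimple if and only if $\Bbbk\IS_{n}$ is.

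It then remains to recall the semisimplicity criterion for the symmetric inverse monoid algebra over an arbitrary field. First I would observe that $\IS_{n}$ is a finite inverse semigroup whose idempotents are the partial identities $\id_{A}$ for $A\subseteq\{1,\ldots,n\}$, with maximal subgroup $G_{\id_{A}}\simeq S_{|A|}$; hence the maximal subgroups of $\IS_{n}$ are $S_{0},S_{1},\ldots,S_{n}$. By the structure theory of inverse semigroup algebras (the isomorphism of $\Bbbk\IS_{n}$ with the algebra of its associated Brandt groupoid, valid over any commutative ring, see \cite[Chapter 4]{Steinberg2016} or the Clifford--Munn--Ponizovskii description in \cite{Ganyushkin2009a}), one has
\[
\Bbbk\IS_{n}\simeq\bigoplus_{k=0}^{n}M_{{n \choose k}}(\Bbbk S_{k}),
\]
so $\Bbbk\IS_{n}$ is semisimple exactly when each $\Bbbk S_{k}$, $0\le k\le n$, is semisimple. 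By Maschke's theorem this holds iff $k!$ is invertible in $\Bbbk$ for every $k\le n$, and since $k!$ divides $n!$ this is equivalent to $n!$ being invertible in $\Bbbk$. Combined with the first paragraph, this proves the corollary.

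Both ingredients are classical, so there is no deep obstacle here; the only points needing a little care are (i) citing the ``full idempotent $\Rightarrow$ Morita equivalence'' principle in exactly the form used, which is routine once \propref{Isomorphisms_for_morita}(1) is in hand, and (ii) pinning down the semisimplicity criterion for $\Bbbk\IS_{n}$ over a general field $\Bbbk$ rather than over $\mathbb{C}$ only, which is why I would route it through the groupoid-algebra decomposition above. (As an alternative to (ii) one could instead invoke the machinery of the present paper, since $\IS_{n}$ is in particular a finite right restriction EI-Ehresmann semigroup, and read off semisimplicity from \thmref{Simple_modules_Thm} together with the projective-module description of \secref{ProjectiveModules}; but the groupoid route is shorter.)
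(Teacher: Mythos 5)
Your argument is correct and follows essentially the same route as the paper: both reduce to $\Bbbk\IS_{n}$ via the Morita equivalence coming from the full idempotent $\mathbf{1}_{n}$ (using both parts of \propref{Isomorphisms_for_morita}) and then invoke the semisimplicity criterion for the symmetric inverse monoid algebra. The only difference is cosmetic --- the paper simply cites \cite[Corollary 9.7]{Steinberg2016} for the $\Bbbk\IS_{n}$ criterion, whereas you rederive it from the groupoid-algebra decomposition; both are fine.
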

\begin{proof}
It is well known (see \cite[Theorem 2.8.7]{Linckelmann2018}) that
for any algebra $A$ and an idempotent $e\in A$, if $A=AeA$ then
the algebras $A$ and $eAe$ are Morita equivalent. In our case, we
take $A=\mathbb{C}\mathcal{C}_{n}$ and $e={\bf 1}_{n}$ so \propref{Isomorphisms_for_morita}
implies that $\mathbb{\Bbbk}\mathcal{C}_{n}$ is Morita equivalent
to $\mathbb{\Bbbk}\IS_{n}$. It is well known that $\mathbb{\Bbbk}\IS_{n}$
is a semisimple algebra if and only if $n!$ is invertible in $\Bbbk$
(\cite[Corollary 9.7]{Steinberg2016}) hence the same holds for $\mathbb{\Bbbk}\mathcal{C}_{n}$
as well. 
\end{proof}
\corref{Semisimple} for $\Bbbk=\mathbb{C}$ settles the counterexample
which is the main goal of this section.

Finally, we would like to give an example of an EI-Ehresmann semigroup
that is not left nor right restriction, but still has an algebra isomorphic
to the algebra of its category. Consider \exaref{EI_Ehresmann_not_Right_left_restriction}
given above. The category $\mathcal{C}$ is a poset. That is, for
any two objects $a,b$ there is at most one morphism in the union
of hom-sets $\mathcal{C}(a,b)\cup\mathcal{C}(b,a)$. Therefore, its
algebra is the incidence algebra of this poset. In this case the poset
is the linear poset $(\id,{\bf 1})<({\bf 1,1})<({\bf 1},\id)$ so
in this case the incidence algebra (over a field $\Bbbk$) is the
algebra $\UT_{3}(\Bbbk)$ of all upper triangular matrices over $\Bbbk$.
Now, consider the map $\mbox{\ensuremath{\Phi:S\to\UT_{3}(\Bbbk)}}$
defined by
\begin{align*}
({\bf 1},\id) & \to\left(\begin{array}{ccc}
1 & 0 & 0\\
0 & 1 & 0\\
0 & 0 & 0
\end{array}\right),\quad(\id,{\bf 1})\to\left(\begin{array}{ccc}
0 & 0 & 0\\
0 & 1 & 0\\
0 & 0 & 1
\end{array}\right)\\
({\bf 1,1}) & \to\left(\begin{array}{ccc}
0 & 0 & 0\\
0 & 1 & 0\\
0 & 0 & 0
\end{array}\right),\quad({\bf 2,1})\to\left(\begin{array}{ccc}
0 & 0 & 0\\
0 & 1 & 1\\
0 & 0 & 0
\end{array}\right)\\
({\bf 1,2}) & \to\left(\begin{array}{ccc}
0 & 1 & 0\\
0 & 1 & 0\\
0 & 0 & 0
\end{array}\right),\quad({\bf 2,2})\to\left(\begin{array}{ccc}
0 & 1 & 1\\
0 & 1 & 1\\
0 & 0 & 0
\end{array}\right).
\end{align*}
It is not difficult to check that $\Phi$ is a one-to-one semigroup
homomorphism and that its image is a basis of $\UT_{3}(\Bbbk)$. Therefore
$\Phi$ extends to an isomorphism $\mbox{\ensuremath{\Phi:\Bbbk S\to\UT_{3}(\Bbbk)}}$.
This implies that $\Bbbk S\simeq\Bbbk\mathcal{C}$ in this case. 

We leave as an open problem to determine necessary and sufficient
conditions for an EI-Ehresmann finite semigroup to have an algebra
isomorphic to that of its associated category algebra. It may be true
that every finite EI-Ehresmann semigroup has an algebra isomorphic
to the algebra of its category.

\textbf{Acknowledgments:} The authors thank the referee for his$\textbackslash$her
helpful comments. We thank Professor Michael Kinyon and Professor
Victoria Gould for a number of useful discussions and for the specific
results we noted in the body of the paper. We thank Professor Timothy
Stokes for pointing our attention to references \cite{Stokes2009}
and \cite{shain1970} and for discussions related to their content
and relation to this paper.

\bibliographystyle{plain}
\bibliography{library}

\end{document}